\title[Weak specification for geodesic flow on CAT(-1) spaces]{The weak specification property for geodesic flows on CAT(-1) spaces}
\author{David Constantine}
\address{
Wesleyan University \\
Mathematics and Computer Science Department \\
Middletown, CT 06459}
\email{dconstantine@wesleyan.edu}
\author{Jean-Fran\c{c}ois Lafont}
\address{Department of Mathematics\\
                 Ohio State University\\
                 Columbus, Ohio 43210}
\email{jlafont@math.ohio-state.edu}
\author{Daniel J. Thompson}
\address{Department of Mathematics\\
                 Ohio State University\\
                 Columbus, Ohio 43210}
\email{thompson.2455@osu.edu}
\date{\today}
\thanks{D.C. thanks the Ohio State University Math Department for hosting him for a semester during which much of this work was done.
J.-F.L. is supported by NSF grants DMS-1510640, DMS-1812028. D.T. is supported by NSF grant DMS-$1461163$.} 
\subjclass[2000]{37D35, 37D40, 37A20, 51F99} 
\keywords{Locally CAT(-1) space, geodesic flow, weak specification property, equilibrium measure, Gibbs property, measure of maximal entropy, large deviations property.}
\newtheorem{thm}{Theorem}[section]
\newtheorem{thmx}{Theorem}
\newtheorem{lem}[thm]{Lemma}
\newtheorem{prop}[thm]{Proposition}
\newtheorem{cor}[thm]{Corollary}
\theoremstyle{definition}
\newtheorem{defn}[thm]{Definition}
\numberwithin{equation}{section}
\def\Pb{\ifmmode{\Bbb P}\else{$\Bbb P$}\fi}
\def\Z{\ifmmode{\Bbb Z}\else{$\Bbb Z$}\fi}
\def\Q{\ifmmode{\Bbb Q}\else{$\Bbb Q$}\fi}
\def\C{\ifmmode{\Bbb C}\else{$\Bbb C$}\fi}
\def\R{\ifmmode{\Bbb R}\else{$\Bbb R$}\fi}
\def\H{\ifmmode{\Bbb H}\else{$\Bbb H$}\fi}
\def\Susp{\operatorname{Susp}}
\def\CAT{\operatorname{CAT}}
\def\Var{\operatorname{Var}}
\def\Per{\operatorname{Per}}
\def \EEE{\mathcal E}
\def \AAA{\mathcal A}
\def \FFF{\mathcal F}
\begin{document}

\begin{abstract}

We prove that the geodesic flow on a compact locally $\CAT(-1)$ space has the weak specification property, and give various applications. We show that every H\"older potential on the space of geodesics has a unique equilibrium state. We establish the equidistribution of weighted periodic orbits and the large deviations principle for all such measures. The thermodynamic results are proved for the class of expansive flows with weak specification. 
\end{abstract}

\maketitle

\setcounter{secnumdepth}{2}

\setcounter{section}{0}

\section{Introduction}

An important characteristic of hyperbolic dynamical systems is the {\it specification property}, 
introduced by Bowen in the early 1970s. The geodesic flow of a negatively curved Riemannian manifold is a prime example of a flow 
satisfying the specification property. Bowen used the specification property to establish
a number of fundamental results about the ergodic properties of such geodesic flows (and more generally, for Axiom A flows), showing for example the equidistribution of prime closed geodesics to an ergodic measure of maximal entropy \cite{bowen-periodic}. These results were proved before Bowen established the existence of Markov partitions and associated symbolic dynamics for these geodesic flows \cite{rB73}. 
Beyond uniform hyperbolicity, the paradigm remains that while proofs of the stronger properties of hyperbolic dynamics require the system to be described by symbolic dynamics \cite{Bowen-Ruelle, PP}, an approach using the specification property affords greater flexibility, and still yields many interesting results. In this paper, we investigate the geodesic flow on locally $\CAT(-1)$ spaces, using geometric arguments to obtain a weak version of the specification property. Once we have the necessary dynamical properties of the flow from these geometric arguments, we proceed using purely analytic arguments to obtain many dynamical properties of the geodesic flow. 

The class of compact {\it locally $\CAT(-1)$ spaces} was popularized in the 1980s by Gromov, as a far reaching generalization of negatively curved Riemannian manifolds. 
To any such space $X$, one can associate the space $GX$ of all bi-infinite geodesics in $X$. The space $GX$ is a compact 
metric space, and possesses a natural $\mathbb R$-flow by shifting the parametrization of 
geodesics -- this is known as \emph{the geodesic flow} since it generalizes the geodesic flow on a Riemannian manifold. A natural problem is to develop Bowen's approach for 
this broader class of flows. Our first result is the following:

\begin{thmx}\label{thm:weak-specification}
Let $X$ be a compact, locally $\CAT(-1)$, geodesic metric space, with fundamental group not isomorphic to $\mathbb Z$. Then the geodesic flow on 
$GX$ satisfies the weak specification property. Furthermore, the geodesic flow is expansive and any H\"older continuous function $\varphi: GX \to \mathbb R$ has the Bowen regularity property, and the system has the weak periodic orbit closing property.
\end{thmx}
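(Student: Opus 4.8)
\emph{Proof strategy.} The plan is to lift everything to the universal cover $\widetilde X$, which, being simply connected and locally $\CAT(-1)$, is a complete geodesic $\CAT(-1)$ space; the group $\Gamma=\pi_1(X)$ acts on it by isometries, freely, properly discontinuously, and cocompactly, and $GX=\Gamma\backslash G\widetilde X$, the geodesic flow being the quotient of the reparametrization flow on the space $G\widetilde X$ of geodesic lines of $\widetilde X$. Since the action is free, $\Gamma$ is torsion‑free, so it is either trivial (whence $\widetilde X=X$ is compact, $G\widetilde X=\emptyset$, and every assertion is vacuous), isomorphic to $\mathbb Z$ (excluded by hypothesis --- in this case $GX$ is a disjoint pair of closed orbits and specification already fails), or non‑elementary; I henceforth assume the last. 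All the work is done by two features of $\CAT(-1)$ geometry. \textbf{(E) Exponential contraction:} there is a constant $A$ so that whenever $c,c'\colon[0,t]\to\widetilde X$ are geodesics with $d(c(0),c'(0))\le 1$ and $d(c(t),c'(t))\le 1$ one has $d(c(s),c'(s))\le A\,e^{-\min(s,\,t-s)}\big(d(c(0),c'(0))+d(c(t),c'(t))\big)$, and analogously geodesic rays to a common point of $\partial_\infty\widetilde X$ converge exponentially; this follows by $\CAT(-1)$ comparison with $\mathbb H^2$. \textbf{(R) Rigidity of geodesics:} $\widetilde X$ is $\delta$‑hyperbolic, two geodesic lines at finite Hausdorff distance share both ideal endpoints, and a geodesic line is determined by its pair of endpoints (no flat strips). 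Finally, cocompactness gives an $R_0$ with $\Gamma p$ being $R_0$‑dense for every $p$, and non‑elementarity makes $\partial_\infty\widetilde X$ perfect with the $\Gamma$‑action on it minimal.

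For weak specification, given orbit segments $(v_1,t_1),\dots,(v_k,t_k)$ in $GX$ and $\varepsilon>0$, I would set $T=T(\varepsilon)$ of order $\log(1/\varepsilon)$ (to be calibrated) and lift each $v_i$ to a geodesic line $c_i$ of $\widetilde X$. I build a single geodesic line inductively: given a geodesic segment ending at a point $z$ whose forward direction points at an ideal point $\eta\in\partial_\infty\widetilde X$, use $R_0$‑density of $\Gamma p$ and minimality of the boundary action to pick $\gamma_{i+1}\in\Gamma$ so that $\gamma_{i+1}c_{i+1}|_{[0,t_{i+1}]}$ begins within $R_0$ of the point $T$ units along the ray $[z,\eta)$, with backward ideal endpoint $(\gamma_{i+1}c_{i+1})(-\infty)$ lying roughly behind that point; then append the geodesic from $z$ to $\gamma_{i+1}c_{i+1}(0)$ followed by $\gamma_{i+1}c_{i+1}|_{[0,t_{i+1}]}$. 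The resulting broken path $\sigma$ is a concatenation of geodesic pieces of length $\ge T$ meeting at corners with Gromov products $\le R_0+O(\delta)$; for $T$ large relative to $R_0,\delta$ the local‑to‑global principle makes $\sigma$ a quasigeodesic, Hausdorff‑close to the geodesic joining its two endpoints. I realize that geodesic by a bi‑infinite line $c$ of $\widetilde X$ --- extending using that geodesics prolong in the cocompact $\CAT(-1)$ setting, or else replacing $c$ by a genuine line with nearby ideal endpoints, available since $\partial_\infty\widetilde X$ is perfect. By (R) (thin quadrilaterals) $c$ stays within $O(\delta)$ of $\sigma$, hence within $O(\delta)$ of each $\gamma_i c_i|_{[0,t_i]}$ near its ends; then (E), applied on each piece with the length‑$T$ buffers on either side, improves this to genuine $\varepsilon$‑shadowing of $v_i$ on the $i$th time window. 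Projecting $c$ to $GX$ gives an orbit that $\varepsilon$‑shadows all the $(v_i,t_i)$ in order with transition gaps of length $\le T(\varepsilon)$, which is the weak specification property.

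The hard part will be exactly that last upgrade: forcing the broken geodesic to shadow the \emph{given} segments with the prescribed small error $\varepsilon$, rather than merely with the fixed hyperbolicity error $\delta$. This forces the transition time to grow like $\log(1/\varepsilon)$ --- which is precisely why one obtains only \emph{weak} specification --- and needs (E) in a sharp quantitative form together with bookkeeping showing the $O(\delta)$ corner errors do not accumulate over the $k$ junctions. A secondary difficulty, absent in the Riemannian case, is the realization step, since a general $\CAT(-1)$ space may contain geodesic segments that do not prolong to lines; here non‑elementarity (perfectness of $\partial_\infty\widetilde X$, minimality of the boundary action, density of endpoint pairs of closed geodesics) is what makes the alignment at the junctions and the passage to an honest line possible.

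The remaining three properties are quick consequences of (E) and (R). \emph{Expansivity:} if $v,w\in GX$ satisfy $d(\phi_s v,\phi_{\rho(s)}w)\le\varepsilon$ for all $s$ and some reparametrization $\rho$ with $\rho(0)=0$, then for $\varepsilon$ small enough that the lifts stay matched by a single element of $\Gamma$, lifts $c,c'$ of $v,w$ remain at bounded distance on all of $\mathbb R$; by (R) they share both ideal endpoints, hence have the same image, so $c'=c(\cdot+s)$ with $|s|=d(c(0),c'(0))\le\varepsilon$, i.e. $w=\phi_s v$ with $|s|\le\varepsilon$. \emph{Bowen regularity:} if $w$ lies in the Bowen ball of radius $\varepsilon_0$ about $v$ over time $t$, the lifts satisfy $d(c(s),c'(s))\le\varepsilon_0$ on $[0,t]$, so (E) gives $d(c(s),c'(s))\le A\varepsilon_0e^{-\min(s,t-s)}$, and for $\varphi$ Hölder of exponent $\alpha$, with $C$ absorbing the Hölder constant and $(A\varepsilon_0)^\alpha$,
\[
\Big|\int_0^t\varphi(\phi_s v)\,ds-\int_0^t\varphi(\phi_s w)\,ds\Big|\le C\int_0^t e^{-\alpha\min(s,t-s)}\,ds\le \frac{2C}{\alpha},
\]
a bound independent of $t$. \emph{Weak periodic orbit closing:} if $d(v,\phi_t v)<\varepsilon$, some $g\in\Gamma$ carries the frame $(c(0),c'(0))$ to within $O(\varepsilon)$ of $(c(t),c'(t))$; for $t$ large $g$ is hyperbolic with translation length in $[t-\kappa,t+\kappa]$, and combining (R) with (E) places its axis within $\varepsilon'$ of $c|_{[0,t]}$, so the image of that axis in $X$ is a closed geodesic giving a periodic orbit $\varepsilon'$‑close to $v$ of period $t+O(1)$.
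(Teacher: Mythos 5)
Your overall route for the two main properties (weak specification and orbit closing) is genuinely different from the paper's: the paper imports Gromov's symbolic coding (an orbit semi-equivalence from a suspension of a transitive SFT, via Coornaert--Papadopoulos) and pushes weak specification and orbit closing down through it using a time-change removal lemma, whereas you build the shadowing geodesic directly in $\tilde X$ by concatenation, local-to-global for quasigeodesics, and $\CAT(-1)$ comparison; your expansivity and Bowen-property arguments coincide in substance with the paper's. However, as written the proposal has a genuine gap at the junction step of the specification construction. You choose $\gamma_{i+1}\in\Gamma$ so that $\gamma_{i+1}c_{i+1}$ starts within $R_0$ of a prescribed point \emph{and} has its backward endpoint ``roughly behind'' that point, citing cocompactness plus minimality of the boundary action. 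Cocompactness controls position only, and minimality is a purely qualitative statement about boundary orbits: neither, nor their combination, gives simultaneous control of position and direction, let alone with bounds independent of the segments $(v_i,t_i)$. Without directional control the corner Gromov product can be huge (the new segment can double straight back along the connecting geodesic), the local-to-global argument fails, and the limiting geodesic need not shadow $c_{i+1}$ at all near its start. More importantly, this is exactly where the \emph{uniform} bound on transition times --- the whole content of weak specification beyond transitivity --- has to come from; you locate ``the hard part'' in the $\varepsilon$-upgrade via exponential contraction, but that part is routine $\CAT(-1)$ comparison, while the uniform alignment step is the one that needs an argument (e.g. uniform topological transitivity of the flow on the compact space $GX$ obtained from transitivity plus a finite-cover compactness argument, or density of endpoint pairs of axes together with a uniformity argument). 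The paper sidesteps this entirely because the analogous uniformity is trivial on the SFT side and survives the push-down.

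There is a second, smaller mismatch: what you prove under ``weak periodic orbit closing'' is an Anosov-type closing lemma for \emph{nearly recurrent} segments ($d(v,\phi_t v)<\varepsilon$), but the property required in the theorem (Definition 4.4 of the paper) is that \emph{every} orbit segment $(\gamma,t)$ is $\varepsilon$-shadowed by a periodic orbit of period at most $t+R(\varepsilon)$. Your statement does not deliver this as written; one must first use (uniform) specification or transitivity to append a bounded-length run returning the orbit near its starting point, and only then apply your closing argument, keeping track that the additive period loss is bounded in terms of $\varepsilon$ alone. Both gaps are repairable with tools consistent with your setup, but they are precisely the quantitative, uniform steps the sketch currently glosses over.
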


The weak specification property for a flow is a natural analogue of a well known discrete-time definition, and is a weakening of Bowen's original specification property. 
We obtain this property, which is the main point of the theorem above, using geometric arguments. We exploit the existence of a coding of the geodesic flow
due to Gromov \cite{gromov}, and expanded upon by Coornaert and Papadopoulos \cite{cp}, which uses topological arguments to give a suspension on a subshift of finite type $\Susp(\Sigma, \sigma)$, 
and an orbit semi-equivalence $h:\Susp(\Sigma, \sigma)\rightarrow GX$. This gives a ``weak'' symbolic description of $GX$: unlike the semi-conjugacy with a suspension flow which occurs in the negatively curved Riemannian setting, a priori, orbit semi-equivalence is too weak a relationship to preserve any of the refined dynamical properties studied in this paper \cite{GM, KT17}. Our approach is to combine this weak symbolic description with a geometric argument to ``push down" the weak specification property from $\Susp(\Sigma, \sigma)$ to $GX$. The weak periodic orbit closing property, defined in \S \ref{closing}, is obtained using the same philosophy. The expansivity property of the flow is obtained by a simple geometric argument. In general, specification and expansivity are not sufficient to ensure that H\"older continuous potentials have Bowen's regularity property. 
However, we can guarantee this in the $\CAT(-1)$ setting using geometric properties of geodesics in negatively curved spaces.  

Our argument for the weak specification property also applies to some $\CAT(0)$ examples, including all those whose geodesic flow is orbit equivalent to geodesic flow on a $\CAT(-1)$ space. Conversely, in many $\CAT(0)$ cases, it is easy to see that weak specification does not hold, and we can use this to rule out the existence of an orbit semi-equivalence with a compact shift of finite type. We collect these partial results for the $\CAT(0)$ case in \S \ref{sec:CAT(0)spaces}.

In the second part of the paper, we use the characterization of the geodesic flow as an expansive flow with weak specification to study thermodynamic formalism and large deviations for $\CAT(-1)$ spaces. We carry this out using purely analytic arguments, and we obtain the following:

\begin{thmx}\label{thm:applications}
Let $X$ be a compact, locally $\CAT(-1)$, geodesic metric space, with fundamental group not isomorphic to $\mathbb Z$, and $\varphi$ a H\"older
continuous function on $GX$. Then
\begin{enumerate}
\item the potential function $\varphi$ has a unique equilibrium measure $\mu_\varphi$,
\item the equilibrium measure $\mu_\varphi$ satisfies the Gibbs property,
\item the $\varphi$-weighted periodic orbits for the geodesic flow equidistribute to $\mu_\varphi$,
\item the ergodic measures are entropy dense in the space of flow-invariant probability measures,
\item the measure $\mu_\varphi$ satisfies the large deviations principle.
\end{enumerate}
In particular, for the special case $\varphi\equiv 0$, we see that the Bowen-Margulis measure $\mu_{BM}$
is the unique measure of maximal entropy, that $\mu_{BM}$ satisfies the Gibbs property, and that
it satisfies the large deviations principle.
\end{thmx}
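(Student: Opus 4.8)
The plan is to deduce Theorem B from Theorem A by invoking a general machinery for expansive flows with the weak specification property (and the Bowen regularity property for the potential), which I would develop in the second half of the paper as a self-contained analytic package. So the first move is to reduce each of the five conclusions to a statement purely about abstract flows: given a continuous flow $(Y,\Phi)$ on a compact metric space that is expansive, has weak specification, and a continuous potential $\varphi$ with the Bowen regularity property, one wants (1) uniqueness of the equilibrium state, (2) the Gibbs property, (3) equidistribution of weighted periodic orbits, (4) entropy density of ergodic measures, and (5) the large deviations principle. By Theorem A the geodesic flow on $GX$ satisfies all the hypotheses (using that $\pi_1(X)\not\cong\mathbb Z$ to guarantee the dynamics is nontrivial, so periodic orbits exist and the variational principle has content), and any H\"older $\varphi$ has the Bowen regularity property, so Theorem B follows once the abstract package is in place.

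Next I would assemble the abstract package. For uniqueness (1) and the Gibbs property (2): weak specification gives a lower Gibbs bound on the partition-function scale $\sum e^{S_t\varphi}$ along $(t,\e)$-separated sets, while expansivity plus Bowen regularity of $\varphi$ gives the matching upper bound; combining these with an argument in the spirit of Bowen's original proof (adapted to flows, and to the \emph{weak} form of specification, where the gluing times grow sublinearly) produces a measure satisfying the Gibbs property and shows any equilibrium state must coincide with it. The standard reference template here is Bowen's uniqueness theorem and its flow version (Franco, and the Climenhaga–Thompson circle of ideas); the point is to check that weak specification — rather than specification with a uniform gap — still suffices, which it does because the sublinearly-growing gaps contribute zero exponential weight. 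For equidistribution of weighted periodic orbits (3): the weak periodic orbit closing property from Theorem A lets one approximate, up to small error in time and in Birkhoff average, each orbit segment appearing in a separated set by a genuine periodic orbit, so the periodic-orbit partition function is comparable to the separated-set partition function; then a weak-* limit argument, combined with the already-established uniqueness of the equilibrium state, forces the weighted periodic orbit measures to converge to $\mu_\varphi$. For entropy density (4): this is a general consequence of specification-type properties via the Eizenberg–Kifer–Weiss approach — one glues together long orbit segments that shadow a finite collection of ergodic measures to build ergodic measures with prescribed entropy and integral, approximating any target invariant measure in the weak-* topology with nearly the same entropy. For the large deviations principle (5): with the upper bound following from expansivity and subadditivity (the Kifer-type general upper bound) and the lower bound following from entropy density (4) together with the uniqueness and Gibbs property of the equilibrium states for the tilted potentials $\varphi + \psi$, one obtains the full LDP with rate function given by the usual Legendre-type formula; this is the standard route of Eizenberg–Kifer–Weiss / Comman–Rivera-Letelier.

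Finally, the special case $\varphi\equiv 0$ is immediate: the unique equilibrium state is then the unique measure of maximal entropy, which for the geodesic flow is the Bowen–Margulis measure, and the Gibbs property and LDP specialize accordingly.

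The main obstacle I expect is not the thermodynamic formalism itself — once the right abstract hypotheses are isolated, it runs along well-charted lines — but rather verifying that the \emph{weak} specification property (with sublinear gluing times) is genuinely strong enough to drive all five conclusions, in particular the uniqueness argument, where one must be careful that the error terms coming from the growing gaps are subexponential and hence invisible at the level of pressure; and, secondarily, pinning down the precise interplay between expansivity and the Bowen regularity of $\varphi$ needed for the upper Gibbs bound, since for flows one must also control the reparametrization/time-change ambiguity inherent in expansive flows (the Bowen–Walters distance) so that $S_t\varphi$ is comparable along orbit segments that $\e$-shadow one another.
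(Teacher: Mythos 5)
Your proposal follows essentially the same route as the paper: Theorem B is deduced from Theorem A together with a general theorem for expansive flows with weak specification and potentials with the Bowen property (Theorem \ref{thm:applicationsgeneral}), with (1)--(2) obtained by the Bowen--Franco argument adapted to weak specification (formally via \cite{ClimenhagaThompsonflows}), (3) from the weak periodic orbit closing property plus uniqueness (showing the Gurevic pressure equals the pressure, so weak$^\ast$ limits of the weighted periodic orbit measures are equilibrium states), (4) by an Eizenberg--Kifer--Weiss-style gluing construction, and (5) from entropy density together with the Gibbs property. The paper treats (4) as the delicate step, since no reference exists for flows or for weak specification: it needs an infinitary version of weak specification and careful bookkeeping of the variable transition times in the separated-set counting; your sketch treats this as routine, but the idea is the same.

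Two cautions on details. First, weak specification in this paper means transition times that are variable but \emph{uniformly bounded} by a constant $\tau(\delta)$, not sublinearly growing gaps; the uniform bound is what the counting and gluing arguments actually use. Second, for the lower large deviations bound you should not route through uniqueness and the Gibbs property of equilibrium states for the tilted potentials $\varphi+t\psi$: the observable $\psi$ is only assumed continuous, so $\varphi+t\psi$ need not have the Bowen property, the abstract uniqueness theorem does not apply to it, and differentiability of $t\mapsto P(\varphi+t\psi)$ is not available. The paper instead verifies the Pfister--Sullivan criteria directly: upper semicontinuity of the entropy map (from expansivity), a lower-energy function coming from the Gibbs property of $\mu_\varphi$ itself, and entropy density; the upper bound is obtained by passing to the time-$1$ map and using the upper Gibbs bound. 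With that substitution your plan matches the paper's proof.
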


The dynamical notions that appear in the above theorem (equilibrium measures, entropy density, large deviations principle, etc.) are 
defined in \S \ref{sec:applications}. In Theorem \ref{thm:applicationsgeneral},  we state and prove our results on thermodynamic formalism and large deviations for the class of expansive flows with weak specification and  potential functions $\varphi$ with the Bowen property. In light of Theorem \ref{thm:weak-specification}, the statement of Theorem \ref{thm:applications} thus follows immediately from Theorem \ref{thm:applicationsgeneral}. 
Technical care must be taken when extending results on flows with specification to the case of weak specification. 
We take particular care in our proof of entropy density of ergodic measures, which is a key step for our large deviations result. To the best of our knowledge, this property has not been studied in the continuous-time setting before, and a self-contained and detailed proof is required. There has been a recent increase in interest in the density and entropy density of ergodic measures \cite{GK, CS2, GP}. In particular, Gorodetski and Pesin \cite{GP} have studied entropy density for $C^{1+\alpha}$ diffeomorphisms using a version of the Katok horseshoe theorem for non-ergodic hyperbolic measures. However, this approach fundamentally belongs to the smooth theory, so even a continuous-time version of this result would not be applicable in the $\CAT(-1)$ setting.

For the geodesic flow on Riemannian manifolds of negative curvature, and more generally for Axiom A flows, uniqueness of equilibrium states for H\"older potentials was proved by Bowen and Ruelle \cite{Bowen-Ruelle}. For expansive flows with strong specification, this result was obtained by Franco \cite{eF77} for potentials with the Bowen property. For geodesic flow on locally $\CAT(-1)$ spaces, the Bowen-Margulis measure, which is defined using the Patterson-Sullivan construction of a measure on the sphere at infinity, has been studied extensively \cite{roblin, LL10}. This measure is well known to be a measure of maximal entropy (MME), as shown by Kaimanovich in the Riemannian setting \cite{vK90, vK91}, and equidistribution of periodic orbits to the Bowen-Margulis measure was shown by Roblin \cite[Theorem 5.1.1]{roblin}. However, uniqueness of the  Bowen-Margulis measure as an MME has not been addressed explicitly until this work, and the large deviations principle for this measure is also new. 

The argument for obtaining the large deviations principle from the specification property goes back to the 1990s with notable references including \cite{De92, Yo, EKW, waddington}.  We adapt this approach to the current setting. Large deviations in dynamical systems were  first developed by Orey and Pelikan \cite{OP} in analogy to results in Probability Theory, see \cite{Ellis}. Large deviations results for flows and semi-flows with weak specification have also been announced in the preprint \cite{BV}.

Uniqueness of equilibrium states beyond the negative curvature compact Riemannian case has received continued interest. For non-positively curved Riemannian manifolds, uniqueness of the MME was proved in the deep work of Knieper \cite{knieper1, knieper2}. Results on the growth rate of weighted regular periodic orbits were obtained in \cite{ GS14}. Recent progress on equilibrium states and weighted equidistribution of periodic orbits in this setting has been made by Burns, Climenhaga, Fisher and the third named author \cite{BCFT}. 

A beautiful theory of equilibrium states has been developed in the non-compact negative curvature  Riemannian setting by Paulin, Pollicott and Schapira \cite{PPS}, including results on uniqueness and equidistribution. In \cite{PPS}, they explicitly state that the reason they assume a smooth structure is due to the difficulties associated with controlling a H\"older potential function on $GX$ for a $\CAT(-1)$ space. We sidestep these difficulties, providing techniques to handle H\"older potentials in the $\CAT(-1)$ setting. This is an advantage of our approach. The results on uniqueness of equilibrium states and weighted equidistribution of periodic orbits are new in the $\CAT(-1)$ setting beyond the Riemannian case. 

 We note that progress towards building a theory of Gibbs measures in the $\CAT(-1)$ setting has also been made recently by Broise-Alamichel, Parkkonen and Paulin in a book project \cite{BPP16} that appeared on the arXiv after the first version of our paper was completed. Their approach has the advantage that it also handles the non-compact case, yielding that Gibbs measures for a restricted class of H\"older potentials are unique when they exist. Their approach requires that the potential is well-defined and well-behaved on an analogue of the unit tangent bundle (see \S2.4 and \S3.2 of \cite{BPP16}). This assumption means that if two geodesics agree for a short time before diverging, the potential (considered on $GX$) must have the same value on each of them. In the non-Riemannian case, this heavily restricts the class of potentials under consideration. When the space is a metric graph of finite groups, their results apply to all H\"older continuous potentials which are well-defined on the unit tangent bundle, and they add to the thermodynamic picture by using countable state symbolic dynamics to show that the unique Gibbs measure is the unique equilibrium state. 
Our method is completely different, and allows us to include the geodesic flow for a compact $\CAT(-1)$ space in the general framework of expansive flows with weak specification. This gives a systematic viewpoint to study the thermodynamic formalism of these flows, and has the major advantage that we can consider H\"older potentials on the space of geodesics without further restrictions. Thus, in the compact setting, we obtain our results for a larger class of potentials, and we prove some results such as entropy density of ergodic measures and the large deviations principle, which are not explored in \cite{BPP16}.

The paper is organized as follows. In \S\ref{sec:spec}, we summarize background material. In \S \ref{sec:theorem1}, we give our geometric argument for the weak specification property. In \S \ref{sec:expansivity}, we prove the other properties of geodesic flows  stated in Theorem \ref{thm:weak-specification}.  In \S \ref{sec:applications}, we prove Theorem \ref{thm:applications} by establishing thermodynamic formalism for expansive flows with weak specification. 

\vskip 10pt

\subsection*{Acknowledgments} We would like to thank the anonymous referees and Tianyu Wang for their helpful comments which have greatly benefited this article. 

\vskip 10pt

%

\section{Background Material} \label{sec:spec}


\subsection{Specification for flows} \label{sec:specf}

Let $\mathcal F=\{f_s\}_{s\in \mathbb R}$ be a continuous flow on a compact metric space $(X,d)$. Given any $t>0$, we can define a new metric by
\[
d_t(x,y) = \max\{d(f_sx, f_sy) : s \in [0, t]\}.
\]

We view $X\times [0,\infty)$ as the space of finite orbit segments for $(X,\mathcal F)$ by associating to each pair $(x,t)$ the orbit segment $\{f_s(x) \mid 0\leq s< t\}$.  

We say that $\mathcal F$ has \emph{weak specification at scale $\delta$} if there exists $\tau>0$ such that for every collection of finite orbit segments  $\{(x_i,t_i)\}_{i=1}^k$, there exists a point $y$ and a sequence of \emph{transition times} $\tau_1,\dots,\tau_{k-1} \in [0,\tau]$ such that for $s_j = \sum_{i=1}^{j} t_i + \sum_{i=1}^{j-1}\tau_i$ and $s_0 = \tau_0 = 0$, we have
\begin{equation}\label{eqn:spec}
d_{t_j}(f_{s_{j-1}+\tau_{j-1}}y, x_j) < \delta \text{ for every } 1\leq j\leq k.
\end{equation}
We say $\mathcal F$ has \emph{weak specification} if it has weak specification at every scale $\delta>0$. We say $\mathcal F$ has \emph{weak specification at scale $\delta$ with maximum transition time $\tau$} if we want to declare a value of $\tau$ that plays the role described above. 
This definition of weak specification for flows appeared recently in the literature in \cite{ClimenhagaThompsonflows}, and under the name `gluing orbit property' in \cite{BV}. 

Intuitively, \eqref{eqn:spec} means that there is some point $y$ whose orbit shadows the orbit of $x_1$ for time $t_1$, then after a transition period which takes time at most $\tau$, shadows the orbit of $x_2$ for time $t_2$, and so on.  Note that $s_j$ is the time spent for the orbit $y$ to approximate the orbit segments $(x_1, t_1)$ up to $(x_j, t_j)$.  It is sometimes convenient to use the word `shadowing' formally: For $y\in X$ and $s\in \mathbb R$, we say that $f_sy$ \emph{$\delta$-shadows} the orbit segment $(x,t)$ if $d_t(f_sy, x)< \delta$. 

The weak specification property clearly implies topological transitivity. Transitivity alone allows us to find an orbit which shadows a finite collection of orbit segments, but it does not give us any control on the length of the transition time. This is the crucial additional ingredient provided by weak specification: the transition times are uniformly bounded above, depending only on the scale $\delta$, and not on the orbit segments, or their lengths.

The specification property for flows which was originally introduced by Bowen is substantially stronger than weak specification. The approximating orbit $y$ is required to be periodic, and the transition times $\tau_i$ are required to be close to $\tau$. See \cite[\S 18.3]{KH} or \cite{bowen-periodic} for the precise definition of this property. 

Finally, we note that while the weak specification property only involves approximating {\it finitely} many orbit segments, it is not difficult to show that this implies the ability to approximate {\it infinitely} many orbit segments. Since we will require this in the proof of Theorem \ref{thm:applications}, details are given in \S \ref{sec:hdense}.


\subsection{Specification for discrete-time systems} \label{def:specmaps} 

Now let $f$ be a continuous map on a compact metric space $X$. We view $X\times \mathbb N$ as the space of finite orbit segments for $(X,f)$ by associating to each pair $(x,n)$ the orbit segment $\{f^ix \mid i \in\{0, \ldots n-1\}\}$. We say that $f$ has \emph{weak specification at scale $\delta$} if there exists $\tau \in \mathbb N$ such that for every collection of finite orbit segments  $\{(x_i,n_i)\}_{i=1}^k$, there exists a point $y$ and a sequence of \emph{transition times} $\tau_1,\dots, \tau_{k-1} \in \mathbb N$ with $\tau_i \leq \tau$ such 
that for $s_j = \sum_{i=1}^{j} n_i + \sum_{i=1}^{j-1}\tau_i$ and $s_0 = \tau_0 = 0$, we have
\begin{equation}\label{eqn:spec2}
d_{t_j}(f^{s_{j-1}+\tau_{j-1}}y, x_j) < \delta \text{ for every } 1\leq j\leq k.
\end{equation}
We say $f$ has \emph{weak specification} if it has weak specification at every scale $\delta>0$. We say $f$ has \emph{specification} if in addition all transition times $\tau_i$ can be taken to be exactly $\tau$ (which depends on $\delta$). 
Classic reference texts for the specification property in discrete-time include \cite{DGS, KH, N}.


\subsection{Shift spaces} We recall some basic properties of shift spaces, referring the reader to \cite{LM, PP} for more details.
The full two-sided shift $\Sigma_\AAA$ on a finite alphabet $\mathcal{A}$ is the space of bi-infinite 
sequences $\mathcal A^{\mathbb Z}$ equipped with the shift operator $\sigma: \Sigma_\AAA \to \Sigma_\AAA$ defined by $\sigma(x)_n = x_{n+1}$ for $(x_n)_{n=-\infty}^\infty \in \Sigma_\AAA$. The space $\Sigma_\AAA$ is 
endowed with the usual product topology, is compact, and is equipped with the metric 
\[d(x, y) = \left\{ \begin{array}{ll} \frac{1}{2^i} \mbox{ where } i=\min\{|n|: x_n \neq y_n\} & \mbox{ when } x\neq y \\
						0 & \mbox{ when } x=y. \end{array} \right.\]

A \emph{shift space}  $(\Sigma, \sigma)$ is a closed, shift-invariant subset $\Sigma$ of $\Sigma_\AAA$ equipped with the shift operator. A shift of finite type (SFT) is a shift space which can be described by a finite set of forbidden words, i.e. words which do not appear in the shift space. 
Given a shift space $(\Sigma, \sigma)$, the \emph{language of 
$\Sigma$}, denoted by $\mathcal L = \mathcal L(\Sigma)$, is the set of all finite words that appear in elements of $\Sigma$. Given $w\in\mathcal L$, let $|w|$ denote the length of $w$. 
The weak specification property has a simpler characterization for shift spaces. It is a straightforward exercise to show that $(\Sigma, \sigma)$ has \emph{weak specification}  in the sense of \S \ref{def:specmaps} if and only if there exists $\tau \in \mathbb N$ so for every $v,w\in \mathcal L(\Sigma)$ there is $u\in \mathcal L(\Sigma)$ such that $vuw\in \mathcal L(\Sigma)$ and $|u|\leq \tau$.


\subsection{Suspension flow} We recall the definition of the suspension flow.

\begin{defn}\label{suspension-flows}
Let $(X,f)$ be a discrete-time dynamical system. Then $\Susp(X,f)$ is the space $(X\times [0,1])/\sim$ 
where $(x,1)\sim(fx,0)$, equipped with the flow $\{\phi_t\}$ defined locally by $\phi_t(x,s) = (x, s+t)$.
\end{defn}
We equip the space with the Bowen-Walters metric \cite{BW}. For two point $(x,s), (y, s)$, we define the horizontal distance to be 
\[
d_H ((x,s), (y,s)) = (1-s) d(x,y) + s d(fx, fy).
\]
For two points $(x, s), (x, t)$, we define the vertical distance to be
\[
d_V((x, s), (x, t)) = |s-t|.
\]
We define $d((x,s), (y,t))$ to be the smallest path length of a chain of horizontal and vertical paths  connecting $(x,s)$ and $(y, t)$, where path length is calculated using $d_H$ and $d_V$. The reason that we use this metric over a more naive choice is that the suspension flow is continuous in the Bowen-Walters metric. We now show that transitivity and weak specification are equivalent for a suspension of an SFT.

\begin{prop}\label{prop:shift spec}
Let $\Sigma$ be a subshift of finite type. The following are equivalent.
\begin{enumerate}
\item $\Sigma$ is transitive;
\item $\Sigma$ satisfies the weak specification property;
\item $\Susp(\Sigma, \sigma)$ is transitive;
\item $\Susp(\Sigma, \sigma)$ satisfies the weak specification property.
\end{enumerate}

\end{prop}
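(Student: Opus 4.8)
The plan is to prove the four equivalences by establishing the cycle of implications $(1)\Rightarrow(2)\Rightarrow(4)\Rightarrow(3)\Rightarrow(1)$, which by transitivity of implication gives all equivalences. Two of these links are essentially formal.

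\emph{The implication $(1)\Rightarrow(2)$.} This is where the finite-type hypothesis is used essentially. Suppose $\Sigma$ is a transitive SFT, and (after passing to a higher block presentation, which does not change the suspension flow up to the relevant equivalences) assume $\Sigma$ is a one-step SFT determined by a transition matrix on the alphabet $\mathcal A$. Transitivity of $\Sigma$ means the transition graph is irreducible, so there is a uniform bound $N$ such that any symbol can be reached from any other symbol along an admissible path of length at most $N$. Given $v,w\in\mathcal L$, write $v = v_1\cdots v_m$ and $w = w_1\cdots w_\ell$; choose an admissible path from the terminal symbol $v_m$ to the initial symbol $w_1$ of length at most $N$, and let $u$ be the interior of this path. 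Then $vuw\in\mathcal L$ and $|u|\le N$. Taking $\tau = N$ verifies Definition \ref{def:spec}. (If one prefers to work directly with the general definition from \S\ref{def:specmaps} rather than Definition \ref{def:spec}, one invokes the stated equivalence between the two; but the language formulation is cleaner here.)

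\emph{The implications $(2)\Rightarrow(4)$ and $(4)\Rightarrow(3)$ and $(3)\Rightarrow(1)$.} For $(4)\Rightarrow(3)$: weak specification implies topological transitivity for any flow — given two nonempty open sets $U,V$ in $\Susp(\Sigma,\sigma)$, pick orbit segments through them and apply weak specification with $k=2$ to find a single orbit that visits a neighborhood of the first and then, after bounded time, a neighborhood of the second; this produces the required return. For $(3)\Rightarrow(1)$: the projection $\pi\colon\Susp(\Sigma,\sigma)\to\Sigma$ sending $(x,s)\mapsto x$ is continuous, surjective, and semiconjugates the time-one... more precisely, a dense forward orbit of the flow projects to a subset of $\Sigma$ whose closure is all of $\Sigma$ and which meets every cylinder infinitely often under iteration of $\sigma$ (since flowing for unit time shifts the base point), giving transitivity of $\sigma$ on $\Sigma$. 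The remaining link $(2)\Rightarrow(4)$ is the one genuine piece of work at the flow level: one must pass from the combinatorial gluing of words in $\mathcal L$ to the metric shadowing condition \eqref{eqn:spec} in the Bowen--Walters metric. Given finite orbit segments $(z_i,t_i)$ in $\Susp(\Sigma,\sigma)$ and a target scale $\delta$, each $z_i=(x_i,s_i)$ with $x_i\in\Sigma$; the orbit segment of length $t_i$ is encoded by a word $w_i\in\mathcal L$ of length roughly $t_i$ together with a starting height. Apply the weak specification of $\Sigma$ to glue $w_1,\dots,w_k$ with connecting words $u_1,\dots,u_{k-1}$ of length $\le\tau$; the resulting $y\in\Sigma$ gives a point $(y,\cdot)\in\Susp(\Sigma,\sigma)$ whose flow orbit, after transition times $\tau_j$ equal to the length of $u_j$ plus a bounded height correction (so $\tau_j\in[0,\tau+2]$, say), shadows each $(z_i,t_i)$. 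One checks that agreement of sufficiently long central blocks of sequences in $\Sigma$ forces the Bowen--Walters distance of the corresponding flow points to be small, uniformly, so that by choosing the block lengths large (equivalently, absorbing a bounded number of extra symbols at each end into the transition words) the estimate \eqref{eqn:spec} holds at scale $\delta$.

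\emph{Main obstacle.} The only subtle point is the bookkeeping in $(2)\Rightarrow(4)$: matching up the discrete gluing of words with continuous-time shadowing, keeping the transition times uniformly bounded, and controlling the Bowen--Walters metric near the gluing seams (where both the base sequence and the height coordinate are being adjusted). One must be careful that the height mismatch at each junction contributes only a bounded amount to the transition time and that the metric estimate is uniform in the number $k$ of segments and in their lengths $t_i$. None of this is deep, but it is the place where the proof is not purely formal, and it is worth isolating a lemma stating that if $x,y\in\Sigma$ agree on coordinates $-M$ through $M$ then $d_{\mathrm{BW}}(\phi_s(x,0),\phi_s(y,0))$ is small for $|s|\le M/2$, with the bound tending to $0$ as $M\to\infty$.
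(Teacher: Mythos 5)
Your proof is correct and follows essentially the same route as the paper: the same implication cycle $(1)\Rightarrow(2)\Rightarrow(4)\Rightarrow(3)\Rightarrow(1)$, with $(1)\Rightarrow(2)$ coming from irreducibility of the transition graph and $(2)\Rightarrow(4)$ from transferring base shadowing to the Bowen--Walters metric at the cost of a bounded (scale-dependent) adjustment to the transition times. The only cosmetic differences are that the paper proves $(2)\Rightarrow(4)$ for the suspension of an arbitrary discrete-time system with weak specification, using the observation that $y\in B_n(x,\delta)$ in the base gives $(y,s)\in B_{n-1}((x,s),\delta)$ in the suspension, rather than your symbolic central-block-agreement lemma, and it proves $(3)\Rightarrow(1)$ by the contrapositive with a pair of cylinder sets instead of invoking a dense orbit.
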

\begin{proof}
We prove (1)$\implies$(2)$\implies$(4)$\implies$(3)$\implies$(1).  

Proving (1)$\implies$(2)  is a straightforward exercise: transitivity for a shift of finite type allows us to transition from any symbol $i$ to another symbol $j$ in bounded time. Thus, to glue two words $v, w \in \mathcal L$, it suffices to look at the final symbol of $v$ and the first symbol of $w$ and take a word which transitions between them.

To prove (2)$\implies$(4), we show that if $(X, f)$ is a dynamical system with the weak specification property, then $\Susp(X, f)$ satisfies weak specification. Suppose $(X, f)$ has weak specification at scale $\delta$ with maximum transition time $\tau$. Suppose that we wish to find an orbit for the suspension flow which approximates the orbit segments $((x_1, s_1), t_1)$, \ldots, $((x_k, s_k), t_k)$ at scale $\delta$. We can apply the weak specification property to approximate the orbit segments $(x_1, \lfloor t_1\rfloor+2)$, \ldots, $(x_k, \lfloor t_k\rfloor+2)$ in the base with an orbit segment $(y, n)$.  It is straightforward to check that if $y\in B_n(x, \delta)$ in the base, then $(y,s) \in B_{n-1}((x, s), \delta)$ in the Bowen-Walters metric.  Using this fact, we can verify that the orbit segment for the flow starting at $(y,s_1)$ approximates the orbit segments $((x_1, s_1), t_1)$, \ldots, $((x_k, s_k), t_k)$ in the sense of \eqref{eqn:spec2} as required, with maximum transition time $\tau+2$.

(4)$\implies$(3) is trivial. All that remains is to show that (3)$\implies$(1), and we prove the contrapositive. If $\Sigma$ is not transitive, then there exists cylinder sets $[w_1], [w_2]$ so that $\sigma^k[w_1] \cap [w_2] = \emptyset$ for all $k$. Clearly, the open sets $A = [w_1] \times (0, \frac12)$, $B = [w_2] \times (0, \frac12)$ satisfy $\phi_tA \cap B = \emptyset$ for all $t$, so $\Susp(\Sigma, \sigma)$ is not transitive.
\end{proof}


\subsection{Orbit equivalence of flows.}  

Let $(X,\{f_s\})$ and $(Y,\{g_s\})$ be continuous flows on compact metric spaces. We recall:

\begin{defn}
A flow $(Y,\{g_s\})$ is \emph{orbit semi-equivalent} to a flow $(X,\{f_s\})$ if there is a continuous surjection $h:X\to Y$, whose restriction to any $\{f_s\}$-orbit in $X$ is an orientation-preserving local homeomorphism onto a $\{ g_s \}$-orbit in $Y$. The flows are \emph{orbit equivalent} if $h:X\to Y$ is a homeomorphism.
\end{defn}
Orbit semi-equivalence is too weak a relationship to preserve any refined dynamical information.   
In particular, weak specification is not preserved by orbit equivalence in general.  To see this, a convenient source of examples of orbit equivalences comes from considering suspension flows with a non-constant roof function $r: X\rightarrow \mathbb (0,\infty)$ over a discrete dynamical system $(X, f)$. It is clear that any two suspension flows over the same base space are orbit equivalent. It is possible to construct a suspension flow over the full shift with more than one measure of maximal entropy, which rules out the possibility that this flow has weak specification. This construction is given in \cite{KT17}.

 Let $h:X\to Y$ be a continuous orbit semi-equivalence between $(X,\{f_s\})$ and $(Y,\{g_s\})$. We prove a result on how orbit semi-equivalence acts on orbit segments which we will use in our proof of the specification property. By continuity of the orbit semi-equivalence, an orbit segment $(x,t)$ for $(X,\{f_s\})$ is mapped to an orbit segment $(h(x), \tau(x,t))$ for $(Y,\{g_s\})$. That is,
\[
h (\{f_s(x) : s \in [0, t]\}) = \{g_s (h(x)) : s \in[0, \tau(x,t)]\},
\]
and in particular, $h(f_t(x))=g_{\tau(x,t)}(h(x))$.

\begin{prop}\label{prop:cts}
Let $(X,\{f_s\})$ and $(Y,\{g_s\})$ be continuous flows on compact metric spaces, and suppose that $(Y,\{g_s\})$ has no fixed points. Let $h:X\to Y$ be a continuous orbit semi-equivalence. Then the function $\tau: X\times [0,\infty) \to [0,\infty)$ defined as above is continuous.
\end{prop}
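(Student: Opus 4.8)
The plan is to prove continuity of $\tau$ at an arbitrary point $(x_0, t_0) \in X \times [0,\infty)$ by exploiting the local homeomorphism property of $h$ restricted to orbits, together with the fact that $(Y, \{g_s\})$ is fixed-point free. The key structural input is that near any point $y \in Y$, the flow $\{g_s\}$ admits a flow box: since there are no fixed points, the orbit through $y$ is a genuine one-parameter curve locally, and there is a local cross-section $S$ through $y$ such that $(s, z) \mapsto g_s(z)$ is a homeomorphism from $(-\e, \e) \times S$ onto a neighborhood of $y$. This lets us read off the ``time coordinate'' of nearby points continuously, which is exactly the information $\tau$ records.

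First I would fix $(x_0, t_0)$ and set $y_0 = h(x_0)$, $T_0 = \tau(x_0, t_0)$, so that $h(f_{t_0} x_0) = g_{T_0}(y_0)$. Choose a flow box around $y_0$ as above, with an associated continuous local time function $\rho$: for $y'$ near $y_0$ (and on the same local orbit), $\rho(y')$ is the unique small $s$ with $g_s(y_0) = y'$, or more precisely we use the flow box coordinates to define the time-displacement between two nearby points on nearby orbits. Next, using continuity of $h$, of the flow $\{f_s\}$, and of the flow $\{g_s\}$, I would argue: if $(x,t)$ is close to $(x_0,t_0)$, then $h(f_t x)$ is close to $h(f_{t_0} x_0) = g_{T_0}(y_0)$, and $h(x)$ is close to $h(x_0) = y_0$. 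Since $h$ maps the $f$-orbit of $x$ onto the $g$-orbit of $h(x)$ as an orientation-preserving local homeomorphism, the point $h(f_t x)$ is $g_{\tau(x,t)}(h(x))$, and this lies on the orbit of $h(x)$; comparing inside the flow box around $g_{T_0}(y_0)$ shows that $\tau(x,t)$ must be close to $T_0$. The orientation-preserving hypothesis is what pins down the sign and prevents $\tau$ from jumping by a full period or flipping; fixed-point-freeness is what guarantees the flow box exists and that the time coordinate is locally injective.

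The one subtlety to handle carefully is that $\tau(x,t)$ could in principle be large even when $(x,t)$ is near $(x_0,t_0)$ — a single flow box only controls a bounded time interval. To deal with this I would first establish an a priori local bound: by compactness and continuity, there is a uniform upper bound on $\tau(x, t)$ for $(x,t)$ in a neighborhood of $(x_0, t_0)$ (the image orbit segment $h(\{f_s x : s \in [0,t]\})$ has length varying continuously, or at least stays bounded, because $h$ is uniformly continuous and the flow $\{g_s\}$ has no very-short periodic orbits near $y_0$ — indeed $Y$ is compact and fixed-point free so periods are bounded below). Then cover the image orbit segment $\{g_s(y_0) : s \in [0, T_0]\}$ by finitely many flow boxes, propagate the local time estimate box by box, and conclude. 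The main obstacle I anticipate is precisely this globalization step: making rigorous that $\tau$ cannot change abruptly as one traverses the whole orbit segment, which requires a compactness/finite-cover argument for the flow boxes plus a lower bound on periods of $\{g_s\}$ to rule out degeneracies. Everything else is a routine unwinding of the definitions of orbit semi-equivalence and the flow box structure.
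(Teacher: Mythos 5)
Your route is genuinely different from the paper's, and considerably heavier. The paper's proof has no flow boxes, cross-sections, or covering arguments: it first observes that continuity in $t$ for fixed $x$ is immediate, so only continuity in $x$ needs work, and then uses a single uniform consequence of compactness plus fixed-point-freeness — there is $\delta>0$ such that two points lying on a common orbit and within distance $\delta$ of each other have time parameters within $\epsilon$. Applying this to the two points $h(f_t x_k)=g_{\tau(x_k,t)}(h(x_k))$ and $g_{\tau(x,t)}(h(x_k))$, both of which converge to $g_{\tau(x,t)}(h(x))$, finishes the argument in a few lines. What your approach buys is that it confronts explicitly the issue you flag (that $\tau$ might jump by a period, or be large even for nearby inputs), which the paper's argument absorbs wholesale into its uniform lemma; the price is that you must invoke Whitney--Bebutov local sections to get flow boxes for a merely continuous flow, and you must carry out a finite-cover propagation in which the time-reading error does not accumulate from box to box.

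The one step in your plan that is a genuine gap as written is the a priori local bound on $\tau(x,t)$. You cannot quote it: in the paper this boundedness is Corollary \ref{boundedtransition}, which is \emph{deduced from} Proposition \ref{prop:cts}, so using it as an input would be circular; and your stated justification ("the image orbit segment has length varying continuously") is itself the continuity you are trying to prove. The fallback you gesture at does work, but needs an actual argument, e.g.: if $\tau(x_k,t_k)\to\infty$ with $(x_k,t_k)$ bounded, pick times $s\in[0,t_k]$ at which $\tau(x_k,s)$ runs through the values $m\epsilon$ (possible since $s\mapsto\tau(x_k,s)$ is continuous and increasing); by the pigeonhole principle two such $s$-values are very close, so by uniform continuity of $(x,s)\mapsto h(f_sx)$ on a compact set the corresponding image points $g_{m\epsilon}(h(x_k))$ and $g_{(m+1)\epsilon}(h(x_k))$ are very close, and a limit point would be periodic with period at most $\epsilon$, contradicting the positive lower bound on periods of a fixed-point-free flow on a compact space. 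With that lemma supplied (or with the propagation argument run as an open-closed bootstrap in $s$, which yields boundedness as a byproduct), your strategy goes through; without it, the finite-cover step has nothing to stand on.
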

\begin{proof}
It is clear from continuity of the orbit semi-equivalence that as $s \to t$, $\tau(x, s) \to \tau(x, t)$, so it suffices to study the first coordinate and show that for a fixed $t$, if $x_k \to x$, then $\tau(x_k, t) \to \tau(x, t)$.

We fix $\epsilon>0$. Since the flow $(Y,\{g_t)\}$ has no fixed points, there exists $\delta>0$ so that if $d(g_{s_1}y, g_{s_2}y)<\delta$, then $|s_1-s_2|<\epsilon$.  
Let $\tau:= \tau(x,t)$. Then, by continuity of the flow and $h$, we have $g_\tau(h(x_k)) \to g_\tau(h(x))$. Thus, for $k$ large, we have
\[
d(g_\tau(h(x_k)), g_\tau(h(x)))< \delta/2,
\]
where $d$ is the metric on $Y$. Now we consider the sequence $h(f_tx_k)$. By continuity, $h(f_tx_k) \to h(f_tx)=g_\tau(h(x))$. Thus, for $k$ large, we have
\[
d(h(f_tx_k), g_\tau(h(x))) < \delta/2,
\]
 and so we have $d(g_{\tau(x_k, t)}(h(x_k)), g_\tau(h(x_k))) =   d(h(f_tx_k), g_\tau(h(x_k)))<\delta$, and these points are on the same orbit. Thus it follows that $|\tau(x_k, t)-\tau|<\epsilon$. It follows that $\tau(x_k, t) \to (x, t)$, and thus the function $\tau$ is continuous.
\end{proof}
\begin{cor} \label{boundedtransition}
Let $(X,\{f_t\})$, $(Y,\{g_t\})$, and $h:X\to Y$ be as in Proposition \ref{prop:cts}. Then for all $t$, there exists $\kappa=\kappa(t)>0$, so that for all $x \in X$, the image of $(x,t)$ under $h$ is contained in the orbit segment $(h(x), \kappa)$. That is,
\[
h (\{f_s(x) : s \in [0, t]\}) \subset \{g_s (h(x)) : s \in[0, \kappa]\}.
\]
\end{cor}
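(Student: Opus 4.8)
The plan is to deduce this immediately from Proposition \ref{prop:cts} together with the compactness of $X$. First I would fix $t \geq 0$. Proposition \ref{prop:cts} tells us that $\tau : X \times [0,\infty) \to [0,\infty)$ is continuous, so in particular the function $x \mapsto \tau(x,t)$ is continuous on $X$. Since $X$ is compact, this function attains a finite maximum $M(t) := \max_{x \in X} \tau(x,t)$, and I would then set $\kappa(t) := M(t) + 1 > 0$.

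Next I would simply unwind the definitions. By the description of $\tau$ recalled just before Proposition \ref{prop:cts}, for every $x \in X$ we have $h(\{f_s(x) : s \in [0,t]\}) = \{g_s(h(x)) : s \in [0, \tau(x,t)]\}$. Since $\tau(x,t) \leq M(t) < \kappa(t)$, the parameter interval $[0,\tau(x,t)]$ is contained in $[0,\kappa(t)]$, whence $h(\{f_s(x) : s \in [0,t]\}) \subset \{g_s(h(x)) : s \in [0,\kappa(t)]\}$, which is exactly the claimed containment. The choice of $\kappa(t)$ manifestly does not depend on $x$, as required.

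I do not expect any genuine obstacle here: the content of the corollary is entirely contained in Proposition \ref{prop:cts}, and the only thing to be careful about is that that proposition provides continuity of $\tau$ on all of $X \times [0,\infty)$, so that restricting to a fixed second coordinate $t$ still yields a continuous (hence, by compactness of $X$, bounded) function of the first coordinate. The hypothesis that $(Y,\{g_s\})$ has no fixed points enters only through its use in Proposition \ref{prop:cts} and is not needed again.
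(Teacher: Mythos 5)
Your argument is correct and is essentially the paper's own proof: both rely on the continuity of $\tau$ from Proposition \ref{prop:cts} together with compactness of $X\times\{t\}$ to bound $\sup\{\tau(x,t):x\in X\}$, and the containment then follows immediately from the definition of $\tau$.
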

\begin{proof}
By continuity of $\tau$, and compactness of $X \times\{t\}$, $\sup\{\tau(x,t): x \in X\}<\infty$.
\end{proof}


\subsection{$\CAT(-1)$ spaces and their geodesic flows.} 

We now recall some basic results on the geometry and dynamics of locally $\CAT(-1)$ space. A detailed discussion of the geodesic flow on locally $\CAT(-1)$ spaces can be found in Ballmann's book \cite{ballmann} or in Roblin's monograph \cite{roblin}. 
 Given any geodesic triangle $\Delta(x,y,z)$ inside a geodesic space $X$, one can construct a comparison triangle 
$\Delta(\bar x, \bar y, \bar z)$ inside the hyperbolic plane $\mathbb H^2$ having exactly the
same side lengths. Corresponding to any pair of points $p, q$ on the triangle $\Delta(x,y,z)$,
there is a corresponding pair of comparison points $\bar p, \bar q$ on $\Delta(\bar x, \bar y, \bar z)$.
The triangle is said to satisfy the $\CAT(-1)$ inequality if, for every such pair of points, one
has the inequality $d_X(p,q) \leq d_{\mathbb H^2}(\bar p, \bar q)$. A geodesic space is $\CAT(-1)$
if every geodesic triangle in the space is $\CAT(-1)$. It is locally $\CAT(-1)$ if every point has
a neighborhood which is $\CAT(-1)$. Any compact locally $\CAT(-1)$ space $X$ has a universal cover $\tilde X$ which is $\CAT(-1)$, with $\Gamma:=\pi_1(X)$
acting isometrically on $\tilde X$.

The definition for a $\CAT(0)$ space is obtained by replacing $\mathbb{H}^2$ with $\mathbb{R}^2$, the model space of curvature 0, in the above.

To a $\CAT(-1)$ space $\tilde X$, one can associate a {\it boundary at infinity} $\partial ^\infty \tilde X$,
consisting of equivalence classes of geodesic rays $\eta: [0, \infty) \rightarrow \tilde X$, where rays 
are considered equivalent if they remain at bounded distance apart. Note that any geodesic 
$\gamma: \mathbb R\rightarrow \tilde X$ naturally gives rise to a pair of points 
$\gamma ^\pm \in \partial ^\infty \tilde X$. If we form $G\tilde X$ the space of all geodesics in $\tilde X$,
there is thus a natural identification $G\tilde X \cong \big((\partial ^\infty \tilde X \times \partial ^\infty \tilde X) \setminus \Delta\big) \times \mathbb R$, where $\Delta \subset \partial ^\infty \tilde X \times \partial ^\infty \tilde X$ is the diagonal.
There is a natural flow on $G \tilde X$, given by translating in the $\mathbb R$-factor,which we call the \emph{geodesic flow} on $\tilde X$. 
This geodesic flow on $G \tilde X$ can be written as $g_t(\gamma(s))=\gamma(s+t)$.

Now if $X$ is locally $\CAT(-1)$, then one can similarly form the space $GX$ of geodesics in $X$, where
a geodesic is a locally isometric map $\gamma: \mathbb R\rightarrow X$. This comes equipped with a 
natural flow, given by pre-composing by translations on $\mathbb R$, which we call the {\it geodesic
flow on $X$}. The fundamental group $\Gamma$ 
acts isometrically on the universal cover $\tilde X$, hence on the boundary at infinity $\tilde X$, and
on the space of geodesics $G\tilde X$. The flow on $G\tilde X$ commutes with the $\Gamma$-action, 
hence descends to a flow on $(G\tilde X)/\Gamma$, and there is a flow equivariant homeomorphism
$GX \cong (G\tilde X)/\Gamma$. 

Finally, if the locally $\CAT(-1)$ space $X$ is compact, then the fundamental 
group $\Gamma$ is a Gromov hyperbolic group, see \cite{gromov}. Such a group has a well-defined boundary
at infinity $\partial ^\infty \Gamma$, and there is a $\Gamma$-equivariant homeomorphism 
$\partial ^\infty \Gamma \cong \partial ^\infty \tilde X$. This allows us to apply results on $\partial ^\infty \Gamma$ 
obtained from the theory of Gromov hyperbolic groups to the boundary $\partial ^\infty \tilde X$.

The space $GX$ of all geodesics in $X$ can be endowed with the following metric:
\[ d_{GX}(\gamma_1, \gamma_2) = \inf_{\tilde \gamma_1, \tilde \gamma_2} \int_{-\infty}^\infty d_{\tilde X}(\tilde \gamma_1(t), \tilde \gamma_2(t))e^{-2|t|} dt \]
where the infimum is taken over all lifts $\tilde \gamma_i$ of $\gamma_i$ to $G\tilde X$.  Since the lifts of a given geodesic form a discrete set on $G\tilde X$, the infimum is in fact a minimum. The factor $2$ in the exponent normalizes the metric so that, for small $s$, $d_{GX}(\gamma, g_s\gamma)=s$.

We assume from now on that the fundamental group $\Gamma=\pi_1(X)$ is {\it non-elementary}, i.e. not isomorphic to $\mathbb Z$. 
This is the generic case. When $\Gamma \cong \mathbb Z$ (e.g. $X=S^1$), the geodesic flow on $X$ behaves differently from other examples, 
and is simple to investigate. $GX$ consists of two disjoint circles, with the flow acting by rotations on the circles.
Note that specification clearly \emph{fails} in this case, as two orbit segments on the distinct circles can never be 
approximated by a single orbit segment. 

We collect some results on $\CAT(-1)$ spaces that we use in this paper. 

\begin{lem} \label{top-transitive}
Let $X$ be a compact, locally $\CAT(-1)$, geodesic metric space. Then the geodesic flow on 
$GX= G(\tilde X/\Gamma) = (G\tilde X)/\Gamma$ is topologically transitive.
\end{lem}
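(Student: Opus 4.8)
The plan is to reduce topological transitivity of the geodesic flow on $GX$ to a statement about the diagonal action of $\Gamma:=\pi_1(X)$ on the boundary $\partial^\infty\tilde X$, and then to invoke the boundary dynamics of the non-elementary hyperbolic group $\Gamma$. Recall the standing assumption $\Gamma\not\cong\mathbb Z$; together with compactness and local $\CAT(-1)$-ness this forces $\Gamma$ to be either finite --- in which case $\tilde X$ is compact, $GX=\emptyset$, and there is nothing to prove --- or a non-elementary word-hyperbolic group acting freely, properly and cocompactly by isometries on $\tilde X$. First I would use the standard criterion (a Baire category argument) that a flow on a compact metric space is topologically transitive if and only if for every pair of nonempty open sets $U,V$ there is $t\in\mathbb R$ with $g_t(U)\cap V\neq\emptyset$; so fix such $U,V\subseteq GX$.

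Next I would pass to the universal cover via the flow-equivariant homeomorphisms $GX\cong(G\tilde X)/\Gamma$ and $G\tilde X\cong\big((\partial^\infty\tilde X)^2\setminus\Delta\big)\times\mathbb R$, writing $\pi\colon G\tilde X\to GX$ for the quotient. Inside the $\Gamma$-invariant open set $\pi^{-1}(U)$ I would choose a basic open box $A_1\times A_2\times I$ with $A_1,A_2\subseteq\partial^\infty\tilde X$ nonempty open, $\overline{A_1}\cap\overline{A_2}=\emptyset$ (possible since the two endpoints of a geodesic are distinct), and $I\subseteq\mathbb R$ a bounded open interval; similarly a box $B_1\times B_2\times I'\subseteq\pi^{-1}(V)$ with $\overline{B_1}\cap\overline{B_2}=\emptyset$. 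The claim is that it suffices to produce $\gamma\in\Gamma$ with $\gamma A_1\cap B_1\neq\emptyset$ and $\gamma A_2\cap B_2\neq\emptyset$. Indeed, then $O:=(A_1\cap\gamma^{-1}B_1)\times(A_2\cap\gamma^{-1}B_2)$ is a nonempty open subset of $A_1\times A_2$, so its points are pairs of distinct boundary points, each realized as the endpoint pair of a geodesic, and $\gamma(O)\subseteq B_1\times B_2$; taking a geodesic $\tilde\eta$ with endpoint pair in $O$ and flowing it into $A_1\times A_2\times I\subseteq\pi^{-1}(U)$, I observe that --- since the flow commutes with $\Gamma$ and translates the $\mathbb R$-coordinate --- $\gamma(g_t\tilde\eta)=g_t(\gamma\tilde\eta)$ has endpoint pair in $\gamma(O)\subseteq B_1\times B_2$ for every $t$ while its $\mathbb R$-coordinate sweeps out all of $\mathbb R$, so I may choose $t$ putting it in $I'$, i.e. $\gamma(g_t\tilde\eta)\in\pi^{-1}(V)$. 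Projecting gives $\pi(\tilde\eta)\in U$ and $g_t\pi(\tilde\eta)=\pi(g_t\tilde\eta)=\pi(\gamma\,g_t\tilde\eta)\in V$, which is what the criterion requires.

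It then remains to show that the diagonal $\Gamma$-action on $(\partial^\infty\tilde X)^2\setminus\Delta$ is topologically transitive; this is classical for a non-elementary hyperbolic group, and I would deduce it from two standard facts about the action of $\Gamma$ on $\partial^\infty\tilde X\cong\partial^\infty\Gamma$: (i) the pairs $(g^-,g^+)$ of repelling/attracting fixed points of infinite-order $g\in\Gamma$ are dense in $(\partial^\infty\tilde X)^2\setminus\Delta$, and (ii) each infinite-order $g$ has North--South dynamics on $\partial^\infty\tilde X$. Concretely, using (i) I would pick an infinite-order $b$ with $(b^-,b^+)\in B_1\times B_2$ and an infinite-order $a$ with $a^+\in A_1$ and $a^-\notin\{b^-,b^+\}$, and take $\gamma=b^m a^{-m}$ for $m$ large: by (ii), $a^{-m}$ blows a small neighbourhood of $a^+$ lying in $A_1$ up to the complement of a small neighbourhood of $a^-$, and sends the rest of $\partial^\infty\tilde X$ into that small neighbourhood of $a^-$, after which $b^m$ sends almost everything into a small neighbourhood of $b^+\subseteq B_2$ while fixing $b^-\in B_1$; this yields $b^-\in\gamma A_1$ and $\gamma\big(A_2\setminus(\text{nbhd of }a^+)\big)\subseteq B_2$, so both intersection conditions hold. (Alternatively, topological transitivity of the geodesic flow in this generality is proved in \cite{ballmann,roblin} and could simply be cited.)

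I expect the main obstacle to be precisely this last step --- equivalently, the statement that any two periodic orbits of the geodesic flow can be joined along a single orbit --- since everything before it is bookkeeping with the product description $G\tilde X\cong\big((\partial^\infty\tilde X)^2\setminus\Delta\big)\times\mathbb R$ and the $\Gamma$-quotient. The ping-pong step is where the hypothesis that $\Gamma$ is non-elementary is genuinely used (for $\Gamma\cong\mathbb Z$, as the authors note, $GX$ is two circles carrying a rotation flow and transitivity fails), and the one place requiring care is the order in which one fixes the auxiliary neighbourhoods and then the power $m$.
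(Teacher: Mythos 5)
Your argument is correct, but it takes a different route from the paper. The paper's proof is a two-line citation: since $\Gamma$ is non-elementary, its action on $\partial^\infty\Gamma\cong\partial^\infty\tilde X$ has dense orbits (Gromov), and then topological transitivity of the flow on $GX=(G\tilde X)/\Gamma$ is quoted directly from Ballmann's Theorem III.2.3 --- i.e.\ exactly the alternative you mention parenthetically at the end. What you do instead is reprove the relevant special case of that theorem: you reduce transitivity of the flow, via the identification $G\tilde X\cong\bigl((\partial^\infty\tilde X)^2\setminus\Delta\bigr)\times\mathbb R$ and the fact that the flow translates the $\mathbb R$-factor and commutes with $\Gamma$, to topological transitivity of the diagonal $\Gamma$-action on the double boundary, and you establish the latter by a ping-pong argument using density of the pole pairs $(g^-,g^+)$ of loxodromic elements together with North--South dynamics; your bookkeeping with the boxes $A_1\times A_2\times I$, the set $O=(A_1\cap\gamma^{-1}B_1)\times(A_2\cap\gamma^{-1}B_2)$, and the choice of $\gamma=b^ma^{-m}$ all check out (including the quantifier order you flag: neighbourhoods first, then $m$). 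The trade-off: the paper's proof is shorter and leans on a theorem of Ballmann that is stated in greater generality, while yours is self-contained at the level of the flow and makes explicit where non-elementarity enters; on the other hand it still imports two classical boundary facts (density of fixed-point pairs and North--South dynamics) that would need citations of their own, so in the end the logical dependence on the hyperbolic-groups literature is comparable. Two cosmetic points: your dichotomy should say ``trivial'' rather than ``finite'' (deck transformations act freely on the complete $\CAT(0)$ space $\tilde X$, so $\Gamma$ is torsion-free), though the conclusion $GX=\emptyset$ is unaffected; and the aside that diagonal transitivity is ``equivalent'' to joining two periodic orbits along one orbit is only a heuristic, not an equivalence, but nothing in your proof depends on it.
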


\begin{proof}
Since $\Gamma$ is non-elementary, the $\Gamma$-action on $\partial ^\infty \Gamma$ has dense orbits (see \cite[Section 8.2]{gromov}), and hence
so does the $\Gamma$-action on $\partial ^\infty \tilde X$. The lemma is now an immediate consequence of \cite[Theorem III.2.3]{ballmann}.
\end{proof}

The following result is a key ingredient for our approach, and gives the existence of symbolic dynamics for geodesic flow on $\CAT(-1)$ spaces using a topological construction reminiscent of the Bowen-Series approach. The main point of the proof was sketched by Gromov, and developed in detail by Coornaert and Papadopoulos \cite{cp} for the geodesic flow on a word hyperbolic group.

\begin{prop}\label{symbolic-coding}
Let $X$ be a compact, locally $\CAT(-1)$, geodesic metric space. Then there exists a topologically transitive subshift
of finite type $(\Sigma, \sigma)$, and an orbit semi-equivalence $h:\Susp(\Sigma,\sigma) \to GX$. Moreover,
$h$ is finite-to-one.
\end{prop}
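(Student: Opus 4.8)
The plan is to assemble this statement from Gromov's outline of symbolic dynamics for word-hyperbolic groups \cite{gromov} together with the detailed development of Coornaert and Papadopoulos \cite{cp}; the two points that carry real content are the extraction of an \emph{honest} subshift of finite type (rather than a merely sofic coding) and the \emph{finite-to-one} assertion, and for both of these I would rely on \cite{cp}.

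First I would set up the symbolic model. Fix a finite symmetric generating set $S$ for $\Gamma=\pi_1(X)$ and a basepoint $\tilde x_0\in\tilde X$; the orbit map $g\mapsto g\tilde x_0$ is a $\Gamma$-equivariant quasi-isometry $\mathrm{Cay}(\Gamma,S)\to\tilde X$ inducing the $\Gamma$-equivariant homeomorphism $\partial^\infty\Gamma\cong\partial^\infty\tilde X$ recalled above. Let $\mathcal B$ be the space of based bi-infinite geodesics in $\mathrm{Cay}(\Gamma,S)$: pairs $(v_0,w)$ with $v_0$ a vertex and $w=(s_n)_{n\in\mathbb Z}\in S^{\mathbb Z}$ a bi-infinite word all of whose finite subwords are geodesic. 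Then $\Gamma$ acts freely on $\mathcal B$ by left translation on $v_0$, and the assignment sending $(v_0,w)$ to the pair of (distinct) endpoints of $w$ in $\partial^\infty\tilde X$ together with the parameter at which $v_0\tilde x_0$ projects onto the unique $\tilde X$-geodesic joining those endpoints is a $\Gamma$-equivariant continuous surjection $\mathcal B\to G\tilde X\cong((\partial^\infty\tilde X)^2\setminus\Delta)\times\mathbb R$. Advancing the origin one step, $(v_0,(s_n))\mapsto(v_0s_0,(s_{n+1}))$, pushes forward to flowing by a positive amount that varies continuously with the point; reading this displacement as a roof function $r$ on the quotient $\mathcal B/\Gamma$ --- which is exactly the shift space $\Sigma_0$ of bi-infinite geodesic words in $\mathrm{Cay}(\Gamma,S)$ --- exhibits $GX=G\tilde X/\Gamma$ as the image of a continuous surjection from the suspension flow of $(\Sigma_0,\sigma)$ with roof $r$, carrying flow lines to flow lines by orientation-preserving local homeomorphisms. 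Finally, by Cannon's theorem the language of geodesic words in $\mathrm{Cay}(\Gamma,S)$ is regular, so $\Sigma_0$ is the image of a genuine subshift of finite type $(\Sigma,\sigma)$ (recode by the states of the canonical geodesic automaton); composing gives a continuous, flow-respecting surjection $\Susp_r(\Sigma,\sigma)\to GX$.

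Two reductions then produce the stated form. First, orbit equivalence does not see the roof function: $\Susp_r(\Sigma,\sigma)$ and the standard suspension $\Susp(\Sigma,\sigma)$ are orbit equivalent via the identity on the base, so precomposing yields the required continuous orbit semi-equivalence $h\colon\Susp(\Sigma,\sigma)\to GX$, with continuity in the Bowen-Walters metric routine from the quasi-isometry estimates. Second, since $\Gamma$ is non-elementary the geodesic flow on $GX$ is topologically transitive (Lemma \ref{top-transitive}), so one may discard inessential symbols and pass to a recurrent, hence transitive, component of the automaton whose suspension still maps onto $GX$; this reduction is carried out in \cite{cp}, and lets us take $(\Sigma,\sigma)$ topologically transitive.

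The hard part --- and the main obstacle --- is the finiteness statements. That $\Sigma$ can be taken to be a subshift of \emph{finite} type rather than merely sofic is exactly what Cannon's regularity theorem for geodesic words supplies. For the finite-to-one property: a fibre of $h$ over a geodesic of $GX$ corresponds, after choosing a lift to a geodesic of $\tilde X$ through a point $p$, to the group elements $g\in\Gamma$ whose translate carries some based geodesic word to that lifted geodesic with origin projecting to $p$. The origin vertex of a geodesic word lies within the quasi-isometry constant of the corresponding $\tilde X$-geodesic, so the relevant $g$ satisfy $d(g\tilde x_0,p)\le D$ for a uniform $D$, and there are only finitely many such $g$ since $\Gamma$ acts properly discontinuously on the proper space $\tilde X$; the remaining input --- that for each such $g$ only finitely many geodesic words are in play --- is where $\delta$-hyperbolicity enters, together with the fact that $\Gamma=\pi_1(X)$ is torsion-free (its action on $\tilde X$ being free), and this is established in \cite{cp}. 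I expect this last point to be the most delicate, and would import it rather than reprove it.
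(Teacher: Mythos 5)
Your route is genuinely different from the paper's: the paper does not build a coding from geodesic words in a Cayley graph at all, but imports Gromov's flow space $\hat G(\Gamma)$ (as developed by Champetier and Math\'eus), takes the uniformly finite-to-one orbit semi-equivalence $h_1:\Susp(\Sigma,\sigma)\to \bar G(\Gamma)=\hat G(\Gamma)/\Gamma$ proved by Coornaert--Papadopoulos, and uses the universal property of $\hat G(\Gamma)$ in the $\CAT(-1)$ case to get an orbit \emph{equivalence} $h_2: GX\to\bar G(\Gamma)$; then $h=h_2^{-1}\circ h_1$ is automatically finite-to-one, and transitivity of $\Sigma$ is arranged by restricting to a transitive component. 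Your direct Cannon-automaton construction would be attractive if it worked, but it has a genuine gap exactly at the point you flag as "the most delicate."

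The finite-to-one claim fails for the coding you built. The image of a suspension point depends only on the origin vertex $v_0$, the pair of endpoints of the word $w$ in $\partial^\infty\tilde X$, and the first letter (for the interpolation); it does not see $w$ away from $v_0$. So the fibre over a point of $GX$ contains one point for \emph{every} bi-infinite geodesic word through a fixed $v_0$ with the prescribed endpoints, and in a hyperbolic group with an arbitrary generating set there are in general infinitely many (indeed uncountably many) such words: whenever some element has two distinct geodesic spellings (e.g.\ the two halves of the relator in a surface group), independent choices can be concatenated along a bi-infinite geodesic without changing its endpoints. Proper discontinuity bounds the number of candidate vertices $v_0$, but the "remaining input" you invoke --- that only finitely many geodesic words are in play for each $v_0$ --- is precisely what is false in general, and it is not established in \cite{cp}: their finite-to-one statement concerns their own, more carefully chosen coding of $\bar G(\Gamma)$, not the full shift of geodesic words (torsion-freeness plays no role here). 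This is the reason the paper routes through $\hat G(\Gamma)$ rather than coding $G\tilde X$ naively. A secondary, fixable, issue: your roof function $r$, defined as the displacement of nearest-point projections of consecutive vertices, need not be positive, since the orbit-map image of a Cayley geodesic is only a quasi-geodesic and its projection to the $\tilde X$-geodesic is not monotone; one would need to recode to blocks of length $N$ (where the quasi-geodesic makes definite progress) before forming the suspension. Relatedly, the asserted surjection $\mathcal B\to G\tilde X$ is not one (its image misses most parametrizations); only the suspension statement is correct as used.
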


\begin{proof}
To a Gromov hyperbolic group $\Gamma$, one can associate a metric space 
$\hat G (\Gamma)$, equipped with both a $\Gamma$-action, and a $\Gamma$-equivariant 
$\mathbb R$-flow. The space $\hat G(\Gamma)$ is constructed to satisfy certain universal properties. 
The construction was outlined by Gromov in \cite[Theorem 8.3.C]{gromov}, with detailed arguments
worked out by Champetier \cite[Section 4]{champetier} (see also Math\'eus \cite{matheus}). 

The quotient metric space $\bar G(\Gamma):= \hat G(\Gamma) /\Gamma$, equipped with the
induced $\mathbb R$-flow, has an orbit semi-equivalence $h_1: \Susp(\Sigma, \sigma) \rightarrow \bar G(\Gamma)$ 
which is uniformly finite-to-one, where $\Sigma$ is a shift of finite type.
This was explained by Gromov in \cite[Section 8.5.Q]{gromov}, and a careful proof can be found in the
paper by Coornaert and Papadopoulos \cite{cp}. Finally, as noted on \cite[pg. 484, Facts 4 and 5]{cp}, in the 
case where $X$ is compact locally $\CAT(-1)$ and $\Gamma = \pi_1(X)$, one has a $\Gamma$-equivariant
orbit equivalence $G\tilde X \rightarrow \hat G(\Gamma)$ (this is deduced from the universal properties
of the flow space $\hat G(\Gamma)$). This descends to an orbit equivalence $h_2:GX \rightarrow \bar G(\Gamma)$.
Defining $h:= h^{-1}_2 \circ h_1 : \Susp(\Sigma, \sigma) \rightarrow GX$ provides the claimed orbit semi-equivalence. 
To see that $\Sigma$ can be taken to be transitive, we can simply observe that since $h$ is an orbit semi-equivalence onto a transitive flow, we still get an orbit semi-equivalence if we restrict to a suitable transitive component of $\Sigma$.
\end{proof}

The following lemma shows that geodesics which are close in $GX$ are close when evaluated at time $0$ on $X$.
\begin{lem}\label{lem:tool2}
For all $\gamma_1, \gamma_2\in GX$, 
\[ d_X(\gamma_1(0), \gamma_2(0)) \leq 2d_{GX}(\gamma_1,\gamma_2).\]
Furthermore, for $s, t \in \mathbb R$, $d_X(\gamma_1(s),\gamma_2(t)) \leq 2 d_{GX}(g_s\gamma_1,g_t \gamma_2).$
\end{lem}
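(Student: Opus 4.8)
The plan is to prove the second statement first and then obtain the first as the special case $s=t=0$. The key observation is that the metric $d_{GX}$ is defined as an infimum (in fact a minimum) of a weighted integral over lifts to $G\tilde X$, with weight $e^{-2|t|}$, and that the local $\CAT(-1)$ condition controls how fast two geodesics in $\tilde X$ can separate. I would first reduce to the universal cover: choose lifts $\tilde\gamma_1, \tilde\gamma_2$ of $\gamma_1,\gamma_2$ realizing the infimum defining $d_{GX}(g_s\gamma_1, g_t\gamma_2)$, so that $d_{GX}(g_s\gamma_1, g_t\gamma_2) = \int_{-\infty}^\infty d_{\tilde X}(\tilde\gamma_1(s+u), \tilde\gamma_2(t+u)) e^{-2|u|}\,du$. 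Since the covering projection is $1$-Lipschitz, $d_X(\gamma_1(s),\gamma_2(t)) \le d_{\tilde X}(\tilde\gamma_1(s), \tilde\gamma_2(t))$, so it suffices to bound the right-hand side by a constant multiple of the integral.

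The heart of the argument is a convexity/Lipschitz estimate on the function $u \mapsto d_{\tilde X}(\tilde\gamma_1(s+u), \tilde\gamma_2(t+u))$. In a $\CAT(-1)$ (hence $\CAT(0)$) space, the distance between two unit-speed geodesics is a convex function of the parameter, and moreover $1$-Lipschitz in each geodesic's parameter separately, so this function $\rho(u) := d_{\tilde X}(\tilde\gamma_1(s+u),\tilde\gamma_2(t+u))$ is $2$-Lipschitz in $u$. Thus for $|u|\le 1$ we have $\rho(u) \ge \rho(0) - 2|u| \ge \rho(0) - 2$. Feeding this into the integral: if $\rho(0) \le 2$ the desired inequality is trivial once $K$ is large (since $d_X(\gamma_1(s),\gamma_2(t)) \le \rho(0)$ may need a separate trivial bound — actually one must be slightly careful here, see below), and if $\rho(0) > 2$ then on $[-1/4, 1/4]$, say, we get $\rho(u) \ge \rho(0) - 1/2 \ge \rho(0)/2 \cdot (\text{something positive})$; more cleanly, choose $\eta>0$ small enough that $2\eta \le \rho(0)/2$ whenever $\rho(0)$ is bounded below, but since $\rho(0)$ can be small this doesn't directly work. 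The correct move is: for $|u| \le \rho(0)/4$ (capped at $|u|\le 1/2$), $\rho(u) \ge \rho(0)/2$, so
\[
\int_{-\infty}^\infty \rho(u) e^{-2|u|}\,du \ge \int_{-\min(\rho(0)/4,\,1/2)}^{\min(\rho(0)/4,\,1/2)} \frac{\rho(0)}{2} e^{-2|u|}\,du = \frac{\rho(0)}{2}\cdot \frac{1-e^{-2\min(\rho(0)/4,1/2)}}{1},
\]
and since $\frac{1-e^{-2\min(a,1/2)}}{\min(a,1/2)}$ is bounded below by a positive constant $c_0$ for all $a>0$ (it tends to $2$ as $a\to 0$ and is decreasing), we get the integral is $\ge c_1 \rho(0)^2$ when $\rho(0)\le 2$ and $\ge c_2\rho(0)$ when $\rho(0)\ge 2$; in all cases $\rho(0) \le K d_{GX}(g_s\gamma_1, g_t\gamma_2)$ provided $d_{GX}$ is, as it is here, a bounded metric on a compact space, so the quadratic regime is absorbed.

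Actually, to keep things clean I would simply split into the two cases $d_{GX} \ge 1$ and $d_{GX} < 1$ at the outset: in the first case $\rho(0) \le \operatorname{diam}(G\tilde X / \text{relevant bounded quantity})$ — rather, $d_X(\gamma_1(s),\gamma_2(t)) \le \operatorname{diam}(X) \le \operatorname{diam}(X)\cdot d_{GX}$ trivially since $d_{GX}\ge 1$; in the second case, $d_{GX} < 1$ forces $\rho(0) < 2$ (else the integral would exceed, say, $2c_1 > $ contradiction for suitable constants), and then the computation above in the quadratic regime combined with $\rho(0) < 2$ gives $\int \rho e^{-2|u|} \ge c_1 \rho(0)^2 \ge (c_1/2)\rho(0)^2$; but we want a linear bound $\rho(0) \lesssim d_{GX}$, not $\rho(0)^2 \lesssim d_{GX}$. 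Hmm — so the genuinely right estimate is the linear one valid for all $\rho(0)$: $\int_{-\infty}^{\infty}\rho(u)e^{-2|u|}du \ge \rho(0)\int_{-\delta}^{\delta}(1 - 2|u|/\rho(0))_+\,\cdot$ — this is getting complicated. The clean statement: since $\rho$ is $2$-Lipschitz, $\rho(u) \ge \max(0, \rho(0)-2|u|)$, so $\int \rho(u)e^{-2|u|}du \ge \int_{-\rho(0)/2}^{\rho(0)/2}(\rho(0)-2|u|)e^{-2|u|}du =: F(\rho(0))$, and one checks $F(r)/r$ is bounded below by a positive constant for $r$ in any bounded interval (it's continuous, positive, with $F(r)/r \to 1$ as $r\to 0$), while for large $r$, $F(r) \ge \int_{-1}^{1}(\rho(0)-2|u|)e^{-2|u|}du \sim c\,\rho(0)$; so $F(r) \ge c_0 \min(r, r) = c_0 r$ fails for small... no: $F(r)/r \to 1$, so it IS bounded below near $0$, and grows at least linearly, so $F(r) \ge c_0 r$ for ALL $r \ge 0$. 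Good — that's the point. So $d_{GX}(g_s\gamma_1,g_t\gamma_2) \ge c_0\,\rho(0) \ge c_0\, d_X(\gamma_1(s),\gamma_2(t))$, and $K = 1/c_0$ works.

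The main obstacle I anticipate is the $2$-Lipschitz (or at least uniform Lipschitz) control of $u\mapsto d_{\tilde X}(\tilde\gamma_1(s+u),\tilde\gamma_2(t+u))$: this is where one genuinely uses that $\tilde X$ is $\CAT(0)$ (convexity of the metric along geodesics), plus the fact that the two geodesics are unit-speed. This should follow from the standard convexity of the distance function in $\CAT(0)$ spaces (e.g. Bridson–Haefliger II.2), but care is needed because the two geodesics may have different endpoints, so one applies convexity on a geodesic quadrilateral / uses the triangle inequality to split the $u$-shift between the two curves. A second, more mundane point is making the constant in the integral estimate explicit and uniform — but since everything reduces to the scalar function $F(r) = \int_{-r/2}^{r/2}(r - 2|u|)e^{-2|u|}\,du$ and its elementary lower bound $F(r)\ge c_0 r$, this is routine calculus.
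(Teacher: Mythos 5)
There is a genuine gap, and it sits exactly at the point you flagged and then talked yourself past: the elementary calculus claim at the end is false. With only the $2$-Lipschitz bound $\rho(u)\ge \max(0,\rho(0)-2|u|)$ you get
\[
\int_{-\infty}^{\infty}\rho(u)e^{-2|u|}\,du \;\ge\; F(\rho(0)),\qquad F(r)=\int_{-r/2}^{r/2}(r-2|u|)e^{-2|u|}\,du \;=\; r-1+e^{-r},
\]
and $F(r)\sim r^2/2$ as $r\to 0$, so $F(r)/r\to 0$, not $1$. Hence $F(r)\ge c_0 r$ fails precisely in the regime that matters (small separation), and the Lipschitz estimate alone only yields $d_{GX}\gtrsim \rho(0)^2$ there, which does not give the lemma's linear inequality $d_X(\gamma_1(s),\gamma_2(t))<K\,d_{GX}$. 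Your earlier fallback (splitting at $d_{GX}\ge 1$, or invoking boundedness of the space) does not repair this: compactness absorbs the large-$\rho$ regime, but the problem is the small-$\rho$, small-$d_{GX}$ regime, where a quadratic lower bound is strictly weaker than a linear one. Note also that the $2$-Lipschitz property is just the triangle inequality for unit-speed curves — it uses no curvature assumption at all, which is a sign that it cannot be the whole input.

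The missing ingredient is convexity, which you mention but never actually exploit. In the $\CAT(-1)$ (hence $\CAT(0)$) universal cover, $t\mapsto d_{\tilde X}(\tilde\gamma_1(t),\tilde\gamma_2(t))$ is convex, and a convex function on $\mathbb R$ cannot dip below its value at $0$ on both half-lines: it satisfies $\rho(t)\ge\rho(0)$ for all $t\ge 0$ or for all $t\le 0$. Integrating the weight over that half-line gives
\[
d_{GX}(\gamma_1,\gamma_2)\;\ge\;\rho(0)\int_0^{\infty}e^{-2t}\,dt\;=\;\tfrac12\,d_{\tilde X}(\tilde\gamma_1(0),\tilde\gamma_2(0))\;\ge\;\tfrac12\,d_X(\gamma_1(0),\gamma_2(0)),
\]
so $K=2$ works, and the second statement follows by applying this to $g_s\gamma_1, g_t\gamma_2$ exactly as you intended. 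This is the paper's argument: your reduction to lifts realizing the infimum and the projection being $1$-Lipschitz are fine, but the linear bound comes from convexity on a half-line, not from a pointwise Lipschitz cone estimate.
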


\begin{proof}
Consider lifted geodesics $\tilde \gamma_1, \tilde \gamma_2 \in G \tilde X$ such that
\[
d_{GX}(\gamma_1, \gamma_2) = d_{G \tilde X} ( \tilde \gamma_1, \tilde \gamma_2) = \int_{-\infty}^\infty d_{\tilde X} ( \tilde \gamma_1(t), \tilde \gamma_2(t))e^{-2|t|} dt.
\]
The function $d_{\tilde X}(\gamma_1(t),\gamma_2(t))$ is a convex function of $t$, and thus for $t\geq 0$ or $t \leq 0$, $d_{\tilde X}(\tilde \gamma_1(t), \tilde \gamma_2(t))\geq d_{\tilde X}(\tilde \gamma_1(0),\tilde \gamma_2(0))$. In either case, we have
\[
d_{G \tilde X} ( \tilde \gamma_1, \tilde \gamma_2) \geq d_{\tilde X}(\tilde \gamma_1(0),\tilde \gamma_2(0)) \int_0^\infty   e^{-2t} dt = \frac{1}{2}d_{\tilde X}(\tilde \gamma_1(0),\tilde \gamma_2(0)).
\]
Noting that $d_X(\gamma_1(0),\gamma_2(0))\leq d_{\tilde X}(\tilde \gamma_1(0),\tilde \gamma_2(0))$ gives the first statement. Observing that $g_s\gamma_1(0) = \gamma_1(s)$ and $g_t\gamma_2(0)= \gamma_2(t)$ and applying the first result completes the proof. 
\end{proof}

For $\gamma \in GX$, we use the notation $\gamma([0, T]) := \{\gamma(s) : s \in [0,T] \}$ for a segment of $\gamma$, considered as a path in $X$. 
We want to lift and compare geodesic segments after a possible time change, so it is convenient to make the following definition.
\begin{defn}
We say that $\rho:[0, T_1] \to [0, T_2]$ is a \emph{time-change function} if it is a continuous, increasing and surjective function.
\end{defn}
Let $\epsilon_0:=\frac{1}{2} \inf \left \{ l(\gamma): \gamma \mbox{ is a closed geodesic} \right \}$, and note that the $\CAT(-1)$ condition and compactness ensure $\epsilon_0>0$.
The following lemma, whose proof is omitted and is a straightforward exercise, shows that geodesic segments that are close (after time change) on $X$ are close after lifting to the universal cover.

\begin{lem}\label{lem:lift track}
Let $\epsilon<\epsilon_0$ and let $\gamma_1([0, T_1])$, $\gamma_2([0, T_2])$ be geodesic segments and $\rho:[0,T_2]\to[0,T_1]$ a time change such that $d_X(\gamma_1(\rho(t)),\gamma_2(t))<\epsilon$ for all $t\in[0,T_2]$. Then for any lift $\tilde \gamma_1$ of $\gamma_1$, there exists a lift $\tilde \gamma_2$ of $\gamma_2$ with $\tilde \gamma_i(0)$ lying above $\gamma_i(0)$ such that $d_{\tilde X}(\tilde \gamma_1(\rho(t)),\tilde \gamma_2(t))<\epsilon$ for all $t\in[0,T_2]$.
\end{lem}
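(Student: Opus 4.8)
The plan is to lift the two geodesic segments simultaneously, starting from points above $\gamma_1(0)$ and $\gamma_2(0)$, and then argue that the lifted segments stay $\e$-close throughout by a ``continuity of the lift'' argument together with the non-existence of short geodesic loops. First I would fix the lift $\tilde\gamma_1$ of $\gamma_1$ once and for all; since $\gamma_2$ is a local isometry $[0,T_2]\to X$ with $d_X(\gamma_1(\rho(0)),\gamma_2(0))<\e<\e_0$, and since the universal cover $\tilde X\to X$ restricted to a small metric ball is an isometry onto its image, I can choose a point $\tilde\gamma_2(0)\in\tilde X$ lying above $\gamma_2(0)$ and at distance $<\e$ from $\tilde\gamma_1(\rho(0))$; this uses that the covering map restricted to the ball $B(\gamma_1(\rho(0)),\e_0)$ of radius $\e_0$ is injective, which follows from the definition of $\e_0$ as half the systole (any point of $X$ has an evenly-covered neighborhood of radius $\e_0$). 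Then I lift the whole path $\gamma_2$ uniquely to $\tilde\gamma_2$ starting at this chosen point $\tilde\gamma_2(0)$; because $\tilde X\to X$ is a covering and $[0,T_2]$ is simply connected, this lift exists and is unique, and $\tilde\gamma_2$ is a geodesic segment in $\tilde X$ (a local isometry into a $\CAT(-1)$ space with no geodesic loops is a geodesic).

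Next I would show the estimate $d_{\tilde X}(\tilde\gamma_1(\rho(t)),\tilde\gamma_2(t))<\e$ holds for all $t\in[0,T_2]$ by a connectedness argument. Let $A=\{t\in[0,T_2] : d_{\tilde X}(\tilde\gamma_1(\rho(t)),\tilde\gamma_2(t))<\e_0\}$. This set is open (by continuity of $t\mapsto(\tilde\gamma_1(\rho(t)),\tilde\gamma_2(t))$, using continuity of $\rho$) and contains $0$. On $A$, both $\tilde\gamma_1(\rho(t))$ and $\tilde\gamma_2(t)$ project into the ball $B(\gamma_1(\rho(t)),\e_0)$ on which the covering map is an isometry onto its image, so locally $d_{\tilde X}(\tilde\gamma_1(\rho(t)),\tilde\gamma_2(t))=d_X(\gamma_1(\rho(t)),\gamma_2(t))$; more precisely, for $t\in A$ the geodesic in $\tilde X$ from $\tilde\gamma_1(\rho(t))$ to $\tilde\gamma_2(t)$ projects to a path of the same length joining $\gamma_1(\rho(t))$ to $\gamma_2(t)$, and conversely the $X$-geodesic of length $<\e<\e_0$ between those two points lifts to a path of equal length from $\tilde\gamma_1(\rho(t))$ ending above $\gamma_2(t)$; since this lifted path has length $<\e_0$ and stays in a ball where the cover is injective, its endpoint must equal $\tilde\gamma_2(t)$, whence $d_{\tilde X}(\tilde\gamma_1(\rho(t)),\tilde\gamma_2(t))\le d_X(\gamma_1(\rho(t)),\gamma_2(t))<\e$. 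Thus on $A$ the distance function is actually $<\e$, which is bounded away from $\e_0$; this shows $A$ is also closed in $[0,T_2]$, hence $A=[0,T_2]$, and the estimate $d_{\tilde X}(\tilde\gamma_1(\rho(t)),\tilde\gamma_2(t))<\e$ holds everywhere, as claimed.

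The one point requiring care — and what I expect to be the main (mild) obstacle — is justifying that the uniqueness of the lifted endpoint really follows from $\e<\e_0$, i.e. that there cannot be two distinct lifts of $\gamma_2(t)$ within distance $\e$ of $\tilde\gamma_1(\rho(t))$. This is where the systole bound enters: two distinct lifts of the same point of $X$ differ by a nontrivial deck transformation, and a nontrivial deck transformation moves $\tilde X$ by at least $2\e_0$ at every point (its translation length is at least the systole $2\e_0$, since any deck transformation of infinite order in a $\CAT(-1)$ group is hyperbolic with an axis projecting to a closed geodesic, and the covering is torsion-free here because $\tilde X$ is $\CAT(-1)$ hence contractible), so two lifts within $\e<\e_0$ of a common point would force a point moved less than $2\e_0$ by a nontrivial deck transformation, a contradiction. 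Alternatively, and more cleanly, one simply invokes that every point of the compact space $X$ has an evenly-covered neighborhood of radius $\e_0$ — which is exactly the content of the definition of $\e_0$ together with compactness — so that the argument reduces to the elementary statement that lifting a short path through an evenly-covered ball is distance-preserving. Either way the lemma follows, and the proof is the ``straightforward exercise'' alluded to in the statement.
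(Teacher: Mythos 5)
Your proof is correct: the paper omits this argument as a straightforward exercise, and your route---lift the short connecting geodesic at $t=0$ to choose $\tilde\gamma_2(0)$, lift $\gamma_2$ by unique path lifting, then run an open--closed argument on $\{t\in[0,T_2]: d_{\tilde X}(\tilde\gamma_1(\rho(t)),\tilde\gamma_2(t))<\epsilon_0\}$ using that a nontrivial deck transformation displaces every point of $\tilde X$ by at least $2\epsilon_0$ (the systole)---is exactly the intended one. The only loose phrasing is the claim that the lifted connecting geodesic at time $t$ ``must end at $\tilde\gamma_2(t)$ because it stays in a ball where the cover is injective''; what actually identifies its endpoint with $\tilde\gamma_2(t)$ is the displacement bound from your final paragraph (two distinct lifts of $\gamma_2(t)$, one within $\epsilon$ and one within $\epsilon_0$ of $\tilde\gamma_1(\rho(t))$, would be moved less than $2\epsilon_0$ by a nontrivial deck transformation), and with that reading the argument is complete.
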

Complementing Lemma \ref{lem:tool2}, the following Lemma shows that geodesic segments which stay close in $X$ are close in $GX$. 

\begin{lem}\label{lem:shadow in X}
Let $\epsilon<\epsilon_0$ be given and $a<b$ arbitrary. Then there exists $T=T(\epsilon)>0$ such that if $d_X(\gamma_1(t),\gamma_2(t))<\epsilon/2$ for all $t\in[a-T,b+T]$, then $d_{GX}(g_t\gamma_1,g_t\gamma_2)<\epsilon$ for all $t\in[a,b]$. For small $\epsilon$, we can take $T(\epsilon)= -  \log(\epsilon)$.
\end{lem}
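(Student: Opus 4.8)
The plan is to estimate $d_{GX}(g_t\gamma_1, g_t\gamma_2)$ directly from the integral formula for the metric on $GX$, splitting the integral into a central portion, where the hypothesis gives pointwise control, and two tails, where we only use the (uniform) boundedness of the diameter of $X$. First I would fix $t \in [a,b]$ and apply Lemma \ref{lem:lift track} to the geodesic segments $\gamma_1\big([a-T,b+T]\big)$ and $\gamma_2\big([a-T,b+T]\big)$ with the identity time change: the hypothesis $d_X(\gamma_1(s),\gamma_2(s)) < \epsilon/2 < \epsilon_0$ on this interval yields lifts $\tilde\gamma_1, \tilde\gamma_2$ to $\tilde X$ with $d_{\tilde X}(\tilde\gamma_1(s), \tilde\gamma_2(s)) < \epsilon/2$ for all $s \in [a-T, b+T]$. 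These lifts are admissible competitors in the infimum defining $d_{GX}(g_t\gamma_1, g_t\gamma_2)$, so it suffices to bound
\[
\int_{-\infty}^{\infty} d_{\tilde X}\big(\tilde\gamma_1(t+u), \tilde\gamma_2(t+u)\big) e^{-2|u|}\, du.
\]

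Next I would split this integral at the values $u = a-T-t$ and $u = b+T-t$; since $t \in [a,b]$, the central interval $[a-T-t, b+T-t]$ contains $[-T, T]$, and on it the integrand is at most $\epsilon/2$, contributing at most $(\epsilon/2)\int_{-\infty}^\infty e^{-2|u|}\,du = \epsilon/2$. For the two tails $|u| \geq T$ (shrinking the central interval down to $[-T,T]$ only makes the tail estimate cruder, which is harmless), I would bound $d_{\tilde X}(\tilde\gamma_1(t+u), \tilde\gamma_2(t+u))$ using convexity of $u \mapsto d_{\tilde X}(\tilde\gamma_1(t+u),\tilde\gamma_2(t+u))$ together with linear growth: a convex function bounded by $\epsilon/2$ on an interval of length $2T$ grows at most linearly with slope $\leq (\epsilon/2)/T \cdot 2 $ outside it — more crudely, $d_{\tilde X}(\tilde\gamma_1(t+u),\tilde\gamma_2(t+u)) \leq \epsilon/2 + C|u|$ for a constant $C$ coming from the 1-Lipschitz dependence of endpoints (each $\tilde\gamma_i$ is unit speed, so the distance grows at most at unit rate once we leave the controlled window, giving $C \le 2$). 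Then $\int_{|u|\geq T}(\epsilon/2 + 2|u|)e^{-2|u|}\,du = O(e^{-2T}(1+T))$, which we want to be $< \epsilon/2$.

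Finally, I would choose $T = T(\epsilon)$ so that the tail bound $O\big((1+T)e^{-2T}\big)$ is less than $\epsilon/2$; since $(1+T)e^{-2T} \to 0$ super-polynomially, the choice $T(\epsilon) = -\log \epsilon$ works for all sufficiently small $\epsilon$, because then $e^{-2T} = \epsilon^2$ and $(1+T)e^{-2T} = (1 - \log\epsilon)\epsilon^2 = o(\epsilon)$. Adding the central and tail contributions gives $d_{GX}(g_t\gamma_1, g_t\gamma_2) < \epsilon$, uniformly in $t \in [a,b]$, as desired.

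I expect the main technical point — though not a deep obstacle — to be the tail estimate: one must be slightly careful that after shifting by $t$ and lifting, the bound $d_{\tilde X}(\tilde\gamma_1(s),\tilde\gamma_2(s)) < \epsilon/2$ genuinely holds on an interval of length at least $2T$ centered close enough to the region contributing mass to the exponentially weighted integral, and that the convexity/linear-growth bound on the tails is applied with the correct reference point. Since $t$ ranges over the compact interval $[a,b]$ while the controlled window $[a-T, b+T]$ is enlarged symmetrically, this is exactly why the window is padded by $T$ on each side rather than just covering $[a,b]$; once this bookkeeping is set up, the rest is the routine exponential integral computation sketched above.
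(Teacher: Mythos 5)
Your proposal is correct and follows essentially the same route as the paper's proof: lift via Lemma \ref{lem:lift track}, split the defining integral for $d_{GX}$ into the controlled window (contributing at most $\epsilon/2$ since $\int_{-\infty}^{\infty}e^{-2|u|}\,du=1$) and two exponential tails bounded by the unit-speed triangle-inequality estimate $\epsilon/2+2(|u|-T)$, then choose $T$ so the tails are small, with $T(\epsilon)=-\log\epsilon$ sufficing for small $\epsilon$. The only cosmetic difference is that the paper tracks the worst cases $t=a$, $t=b$ and aims for $\epsilon/4$ per tail, while you use the slightly cruder bound $\epsilon/2+2|u|$ over $|u|\geq T$; both yield the same $O\bigl((1+T)e^{-2T}\bigr)$ tail estimate.
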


\begin{proof}
Choose $T=T(\epsilon)$ so that $\int_T^\infty (\epsilon/2 + 2(\sigma-T))e^{-2\sigma}d\sigma < \epsilon/4.$ Analysis of this integral shows that for small $\epsilon$,  we could take $T(\epsilon) = \log(\epsilon^{-1})$. Lift $\gamma_i$ to $\tilde \gamma_i$ with $d_{\tilde X}(\tilde\gamma_1(t), \tilde \gamma_2(t))<\epsilon/2$ by Lemma \ref{lem:lift track}. First, we consider the integral $\int_{a-T}^{b+T} d_{\tilde X}(\tilde \gamma_1(\tau),\tilde \gamma_2(\tau)) e^{-2|\tau-t|} d\tau$ and note that we can bound 
$d_{\tilde X}(\tilde \gamma_1(\tau), \tilde \gamma_2(\tau))$, and thus the whole integral independent of $T$, by $\epsilon/2$.

We now consider the integrals 
\[
\int_{-\infty}^{a-T} d_{\tilde X}(\tilde \gamma_1(\tau), \tilde \gamma_2(\tau)) e^{-2|\tau-t|} d\tau \mbox{ and }\int_{b+T}^{\infty} d_{\tilde X}(\tilde \gamma_1(\tau),\tilde \gamma_2(\tau)) e^{-2|\tau-t|} d\tau.
\]
Since $a\leq t \leq b$, over the domain of the first integral $|\tau-t|=-(\tau-t)$, and over the domain of the second interval $|\tau-t|=(\tau-t)$. 

In the first, we may bound $d_{\tilde X}(\tilde \gamma_1(\tau), \tilde \gamma_2(\tau))<\epsilon/2+2(a-T-\tau)$ and in the second, $d_{\tilde X}(\tilde \gamma_1(\tau), \tilde \gamma_2(\tau))<\epsilon/2+2(\tau-b-T)$ using the triangle inequality. It follows that $d_{GX}(g_t\gamma_1, g_t\gamma_2) =  \int_{-\infty}^{\infty} d_{\tilde X}(\tilde \gamma_1(s+t),\tilde \gamma_2(s+t)) e^{-2|s|} ds$ is bounded above by
\[
\int_{-\infty}^{a-T} (\epsilon/2+2(a-T-\tau)) e^{2(\tau-t)} d\tau + \int_{b+T}^{\infty} (\epsilon/2+2(\tau-b-T)) e^{-2(\tau-t)} d\tau  +  \epsilon/2,
\]
making the change of variables $\tau=s+t$. The first integral is largest when $t=a$, the second when $t=b$. Making these substitutions and changing variables by $\sigma=\tau-a$, $\sigma=\tau-b$, respectively,
\begin{align}
	d_{GX}(g_t\gamma_1, g_t\gamma_2) & <  \int_{-\infty}^{-T} (\epsilon/2+2(T-\sigma)) e^{2\sigma} d\sigma + \int_{T}^{\infty} (\epsilon/2+2(\sigma-T)) e^{-2\sigma} d\sigma \nonumber  +  \epsilon/2. \nonumber
\end{align}

\noindent Our choice of $T$ finishes the proof.
\end{proof}

%

\section{Weak specification for the geodesic flow}\label{sec:theorem1}

We consider a compact, locally
$\CAT(-1)$, geodesic space $X$, and we wish to establish the weak specification property for $GX$.
By Lemma \ref{symbolic-coding}, there exists a topologically transitive subshift of finite type $(\Sigma, \sigma)$, and an orbit
semi-equivalence $h: \Susp(\Sigma, \sigma) \rightarrow GX$. 
On $\Susp(\Sigma, \sigma)$, Proposition \ref{prop:shift spec} shows that transitivity immediately bootstraps to 
weak specification. We now show that this property can be transported to $GX$ using the orbit semi-equivalence $h$.
While the weak specification property is not preserved under a general orbit semi-equivalence, the geometry of our setting provides more structure to carry out our argument.

The following lemma allows us to show that geodesic segments which are close after a time change are in fact close without the time change. This is where the assumption that the geodesic flow is on a space of negative curvature is used. The proof requires only that geodesics in the universal cover are globally length minimizing, so a non-positive curvature assumption would be sufficient. 

\begin{prop}\label{prop:nbhd shadow}
Let $X$ be a $\CAT(-1)$ space, and $\gamma_1, \gamma_2 \in GX$ be geodesics.  Suppose there exists a time change $\rho:[0,T_2] \to [0,T_1]$ so that $d_X(\gamma_1(\rho(t)), \gamma_2(t))< \epsilon$ for all $t \in [0, T_2]$. Then $d_X(\gamma_1(t), \gamma_2(t)) < 3\epsilon$ for all $t \in [0,T_1-2\epsilon]$.
\end{prop}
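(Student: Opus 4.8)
The plan is to use the one geometric fact really needed here: in a $\CAT(-1)$ space a geodesic $\gamma:\mathbb{R}\to X$ is a global isometric embedding, so $d_X(\gamma_i(a),\gamma_i(b))=|a-b|$ for all $a,b$. The hypothesis controls $\gamma_2(t)$ against the \emph{reparametrized} point $\gamma_1(\rho(t))$; the strategy is to upgrade this to control of $\gamma_2(t)$ against $\gamma_1(t)$ by showing that $\rho$ cannot displace a time parameter by more than $2\epsilon$. Tracking a pseudo-inverse of $\rho$ rather than $\rho$ itself is what produces the sharp constant.

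First I would record that, since $\rho$ is continuous, non-decreasing and surjective onto $[0,T_1]$, we have $\rho(0)=0$ and, for each $t\in[0,T_1]$, some $\sigma(t)\in[0,T_2]$ with $\rho(\sigma(t))=t$ (e.g. the left endpoint of the fibre $\rho^{-1}(t)$). Evaluating the hypothesis at $t=0$ gives $d_X(\gamma_1(0),\gamma_2(0))<\epsilon$. The key estimate is that for every $u\in[0,T_2]$,
\[
\rho(u)=d_X(\gamma_1(0),\gamma_1(\rho(u)))\le d_X(\gamma_1(0),\gamma_2(0))+d_X(\gamma_2(0),\gamma_2(u))+d_X(\gamma_2(u),\gamma_1(\rho(u)))<u+2\epsilon ,
\]
where we used $d_X(\gamma_1(0),\gamma_1(\rho(u)))=\rho(u)$ and $d_X(\gamma_2(0),\gamma_2(u))=u$ (lengths along geodesics) together with the hypothesis $d_X(\gamma_2(u),\gamma_1(\rho(u)))<\epsilon$. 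Taking $u=T_2$ and $\rho(T_2)=T_1$ gives $T_1<T_2+2\epsilon$, so $[0,T_1-2\epsilon]\subset[0,T_2)$ and $\gamma_2(t)$ lies on the given segment for all relevant $t$.

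Then, for $t\in[0,T_1-2\epsilon]$, I would pin down $\sigma(t)$. Applying the key estimate at $u=\sigma(t)$ gives $t=\rho(\sigma(t))<\sigma(t)+2\epsilon$, i.e. $\sigma(t)>t-2\epsilon$. Running the analogous triangle-inequality chain with the roles of $\gamma_1,\gamma_2$ reversed,
\[
\sigma(t)=d_X(\gamma_2(0),\gamma_2(\sigma(t)))\le d_X(\gamma_2(0),\gamma_1(0))+d_X(\gamma_1(0),\gamma_1(t))+d_X(\gamma_1(t),\gamma_2(\sigma(t)))<t+2\epsilon ,
\]
so $|\sigma(t)-t|<2\epsilon$. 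Finally,
\[
d_X(\gamma_1(t),\gamma_2(t))\le d_X(\gamma_1(t),\gamma_2(\sigma(t)))+d_X(\gamma_2(\sigma(t)),\gamma_2(t))<\epsilon+|\sigma(t)-t|<3\epsilon ,
\]
which is the assertion.

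I do not expect a genuine obstacle: the argument is purely metric and uses negative (indeed non-positive) curvature only through global length-minimization of geodesics. The only point demanding care is the bookkeeping with $\sigma$ when $\rho$ is merely non-decreasing (it may be constant on subintervals, so one must select a value in each fibre), and arranging the two triangle-inequality estimates so that the transition cost $2\epsilon$ and the shadowing cost $\epsilon$ add to exactly $3\epsilon$.
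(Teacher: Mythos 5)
Your argument is correct, and its core is the same as the paper's: use global length-minimization of the geodesics together with two triangle-inequality comparisons to show that the time change displaces parameters by less than $2\epsilon$, then finish with one more triangle inequality. Your bookkeeping with a section $\sigma$ of $\rho$ is just the mirror image of the paper's direct bound $|t-\rho(t)|<2\epsilon$; the paper handles the domain issue by noting $|T_1-T_2|<2\epsilon$, exactly as you do via $T_1<T_2+2\epsilon$. The one substantive difference is your opening claim: the identity $d_X(\gamma_i(a),\gamma_i(b))=|a-b|$ is valid when $X$ is globally $\CAT(-1)$, so your proof does establish the statement as literally phrased, but the proposition is subsequently applied to geodesics $\gamma\in GX$ for a compact locally $\CAT(-1)$ space, where geodesics are only locally minimizing and this identity fails at large scales (e.g.\ along closed geodesics). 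For that reason the paper's proof begins by lifting $\gamma_1,\gamma_2$ to the universal cover via Lemma \ref{lem:lift track} — which is where the implicit restriction $\epsilon<\epsilon_0$ enters — and runs precisely your computation upstairs, where geodesics are globally minimizing, before descending the estimate back to $X$. Adding that lifting step would make your write-up match the paper's proof in the generality in which the proposition is actually used.
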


\begin{proof}
First, using Lemma \ref{lem:lift track}, we lift $\gamma_i$ to geodesic segments on the universal cover so that $d_{\tilde X}(\tilde \gamma_1(\rho(t)),\tilde \gamma_2(t))<\epsilon$ for all $t\in[0,T_2]$. If we prove the statement in the universal cover, we have proven it in the original space. In the universal cover, the geodesics are globally length minimizing, and $d_{\tilde X}(\tilde\gamma_i(t_1),\tilde\gamma_i(t_2))=|t_1-t_2|.$

We fix $t\in [0,T_2]$,  and we know that $\tilde\gamma_2(t)$ is within distance $\epsilon$ of $\tilde\gamma_1(\rho(t))$. Then one can reach $\tilde\gamma_2(t)$ from $\tilde\gamma_2(0)$ by the geodesic $\tilde\gamma_2$, or by following the path $\tilde\gamma_2(0) \to \tilde\gamma_1(0) \to \tilde\gamma_1(\rho(t)) \to \tilde\gamma_2(t)$ (see Figure \ref{fig:track}). By the length-minimizing property of $\tilde\gamma_2$,
\[ 
t = d_{\tilde X}(\tilde\gamma_2(0),\tilde\gamma_2(t)) < 2\epsilon + d_{\tilde X}(\tilde\gamma_1(0),\tilde\gamma_1(\rho(t))) = 2\epsilon + \rho(t). 
\]

\begin{center}

\setlength{\unitlength}{.5pt}

\begin{picture}(600,190)(-300,-70)

\qbezier(-250,50)(0,-20)(250,50)	
\qbezier(-250,-50)(0,20)(250,-50)

\put(-210,40){\circle*{5}}
\put(-210,-40){\circle*{5}}

\put(80,19){\circle*{5}}
\put(80,-19){\circle*{5}}

\put(-265,58){$\tilde\gamma_2(0)$}
\put(-265,-74){$\tilde\gamma_1(0)$}

\put(92,39){$\tilde\gamma_2(t)$}
\put(92,-50){$\tilde\gamma_1(\rho(t))$}

\put(-230,-5){$\Bigg\{$}
\put(-260,-5){$<\epsilon$}

\put(85,-5){$\Big\}$}
\put(100,-5){$<\epsilon$}

\qbezier(-210,60)(0,60)(80,60)
\qbezier(-210,53)(-210,60)(-210,67)
\qbezier(80,53)(80,60)(80,67)

\put(-80,70){$t$}

\qbezier(-210,-60)(0,-60)(80,-60)
\qbezier(-210,-53)(-210,-60)(-210,-67)
\qbezier(80,-53)(80,-60)(80,-67)

\put(-90,-80){$\rho(t)$}

\end{picture}

\begin{figure}[h]
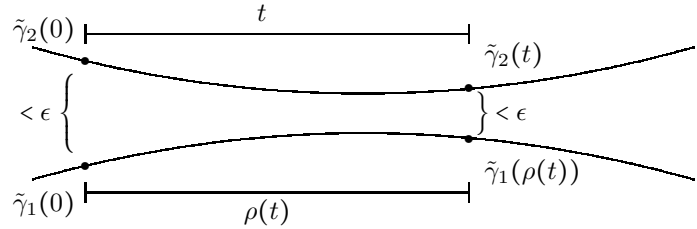

\caption{Nearby geodesics in the $\textrm{CAT}(-1)$ space $\tilde X$ must shadow each other.}\label{fig:track}
\end{figure}

\end{center}

\noindent By interchanging the roles of the geodesics, $\rho(t) < 2\epsilon + t$, and so $|t-\rho(t)|<2\epsilon.$ Thus, 
\begin{align*}
d_{\tilde X}(\tilde \gamma_1(t),\tilde \gamma_2(t))& \leq d_{\tilde X}(\tilde \gamma_1(t),\tilde \gamma_1(\rho(t)))+ d_{\tilde X}(\tilde \gamma_1(\rho(t)),\tilde \gamma_2(t))\\ & \leq |t-\rho(t)| + \epsilon < 3\epsilon.
\end{align*}
Since $d_{\tilde X}(\tilde \gamma_1(T_1), \tilde \gamma_2(T_2))<\epsilon$, a similar argument shows that $|T_1-T_2|<2\epsilon.$ Thus, the above estimate holds for $t \in [0,T_1-2\epsilon]$.
\end{proof}

The proof of the weak specification property for geodesic flow on a compact $\CAT(-1)$ space is an immediate corollary, via Proposition 
\ref{top-transitive} and Lemma \ref{symbolic-coding}, of the following result. 

\begin{thm} \label{thm:spec}
Suppose that $(Y, \FFF)$ is a flow on a compact space satisfying the weak specification property. 
Suppose that $h:Y \to GX$ is a continuous, surjective orbit semi-equivalence to the geodesic flow on a
compact, locally $\CAT(-1)$ space $X$. Then the geodesic flow $(GX, \{g_t\})$ satisfies the weak specification property.
\end{thm}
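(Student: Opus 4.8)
The plan is to transport the weak specification property from $(Y,\FFF)$ to $(GX,\{g_t\})$ through the orbit semi-equivalence $h$, using the $\CAT(-1)$ geometry (via Proposition \ref{prop:nbhd shadow}) to repair the distortion that $h$ introduces to the time parametrization. Fix a target scale $\delta>0$ for $GX$. First I would choose $\epsilon<\epsilon_0$ small enough (roughly $\epsilon = \delta/c$ for an explicit constant $c$ coming from Lemma \ref{lem:shadow in X} and the factor $3$ in Proposition \ref{prop:nbhd shadow}) and reduce, via Lemma \ref{lem:shadow in X} and Lemma \ref{lem:tool2}, the problem of $\delta$-shadowing in $GX$ to the problem of $(\epsilon/2)$-shadowing \emph{in $X$} on a slightly enlarged time window. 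Given orbit segments $(\gamma_1,t_1),\dots,(\gamma_k,t_k)$ in $GX$ that we wish to shadow, I would pull each $\gamma_j$ back to an orbit segment for $\FFF$ in $Y$. Because $h$ is only an orbit semi-equivalence, the preimage of $(\gamma_j, t_j + 2T(\epsilon))$ is an $\FFF$-orbit segment $(y_j, r_j)$ whose length $r_j$ is reparametrized; since $h$ is continuous and $GX$ has no fixed points, the reparametrization is controlled (Proposition \ref{prop:cts}, Corollary \ref{boundedtransition} give uniform bounds), so each $r_j$ is finite and we can apply weak specification of $\FFF$ at a suitably small scale to produce a single point $y\in Y$ with transition times bounded by some $\tau_Y$.

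Next I would push $y$ forward by $h$. The orbit of $h(y)$ in $GX$ shadows, in the orbit-semi-equivalence sense, each $h(y_j)$-segment — meaning it traces out the same \emph{path} in $X$ as $\gamma_j$ does, up to a time change $\rho_j$, and up to the scale controlled by continuity of $h$ (here I would use uniform continuity of $h$ on the compact space $Y$ to convert the $Y$-scale shadowing into closeness of the image paths in $GX$, hence in $X$ by Lemma \ref{lem:tool2}). This is precisely the hypothesis of Proposition \ref{prop:nbhd shadow}: the path of $h(y)$ over the relevant time interval is $\epsilon$-close, after a time change, to $\gamma_j([0,t_j+2T(\epsilon)])$. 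Applying Proposition \ref{prop:nbhd shadow} removes the time change at the cost of worsening the constant from $\epsilon$ to $3\epsilon$ and shrinking the interval by $2\epsilon$; the extra buffer of $2T(\epsilon)$ in the pulled-back lengths is exactly what absorbs this shrinkage and the $T(\epsilon)$-window needed to invoke Lemma \ref{lem:shadow in X}. Feeding the resulting genuine ($3\epsilon$-in-$X$, no time change) shadowing into Lemma \ref{lem:shadow in X} yields $\delta$-shadowing in $GX$ on the desired intervals $[0,t_j]$. The transition times for $GX$ are the images under $h$ of the transition segments in $Y$ together with the absorbed buffers; by Corollary \ref{boundedtransition} these are bounded uniformly in terms of $\tau_Y$ and $T(\epsilon)$, giving a maximum transition time depending only on $\delta$. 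Since $\delta>0$ was arbitrary, $(GX,\{g_t\})$ has weak specification.

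The main obstacle I expect is bookkeeping the time-parametrization mismatch cleanly: $h$ distorts lengths of orbit segments in a way that is continuous but not controlled below (an orbit segment could map to a very short one), and the transition segments in $Y$ must be shown to map to \emph{bounded-length} transition segments in $GX$ — this is where one leans on Corollary \ref{boundedtransition} and the absence of fixed points, and one must be careful that the constant $\kappa(t)$ there is applied with the right value of $t$ (namely $\tau_Y$, which is fixed once $\epsilon$ is fixed). A secondary subtlety is that weak specification in $Y$ only lets us prescribe the \emph{lengths} of the segments to be shadowed, so when we pull back $(\gamma_j, t_j+2T(\epsilon))$ we must take its actual $h$-preimage length $r_j$ (possibly after trimming so the $h$-image is exactly the path we want), and then verify that shadowing the $(y_j,r_j)$ in $Y$ really does produce a point whose image shadows the right portion of $\gamma_j$ — i.e., the diagram of pull-back, glue, push-forward commutes up to the controlled errors. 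Once these two points are handled, everything else is a routine concatenation of the estimates already established in the preceding lemmas.
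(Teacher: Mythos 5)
Your proposal follows essentially the same route as the paper's proof: pull back each target segment with a buffer to $Y$, apply weak specification there at a scale dictated by uniform continuity of $h$ and Lemma \ref{lem:tool2}, remove the time change with Proposition \ref{prop:nbhd shadow}, return to $GX$-closeness via Lemma \ref{lem:shadow in X}, and bound the transition times by Corollary \ref{boundedtransition} plus the $2T$ buffer. The only (self-acknowledged) bookkeeping difference is that the paper lifts the shifted segments $(g_{-T}\gamma_i,\, t_i+2\epsilon+2T)$, so the buffer sits on both sides and also absorbs the $2\epsilon$ shrinkage from Proposition \ref{prop:nbhd shadow}, exactly as your plan intends.
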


\begin{proof}
Let $\epsilon>0$. We fix a collection of orbit segments $\{(\gamma_i,t_i)\}_{i=1}^k$ for $(GX, \{g_t\})$, and show how to glue them together.  Let $T=T(\epsilon)$  be the constant from Lemma \ref{lem:shadow in X}. As $h$ is uniformly continuous, let $\delta>0$ be so small that $d_Y(y_1,y_2)<\delta$ implies $d_{GX}(h(y_1), h(y_2))<\epsilon/6$. Thus, writing $\gamma_1=h(y_1), \gamma_2=h(y_2)$, it follows from Lemma \ref{lem:tool2} that $d_X(\gamma_1(0), \gamma_2(0))< \epsilon/3$. 

Fix lifts $\{(y_i,\hat t_i)\}_{i=1}^k$ under $h$ of orbit segments $\{(g_{-T}\gamma_i, t_i+2\epsilon+2T)\}_{i=1}^k$. That is, each $(y_i,\hat t_i)$ is an orbit segment for $(Y, \FFF)$ such that 
\[
\{h(f_s y_i): s \in [0, \hat t_i]\} = \{g_s \gamma_i : s \in[-T, t_i+T+ 2 \epsilon]\}.
\]
The first step is to apply the specification property to these lifted orbit segments. Let $\hat \tau$ be provided by the weak specification property for $(Y, \FFF)$ at scale $\delta$. There is a point $z\in Y$ and a sequence of transition times $\hat \tau_1, \ldots \hat \tau_{k-1} \leq \hat \tau$ such that 
\[ d_{\hat t_j}(f_{\hat s_{j-1}+\hat \tau_{j-1}}z,y_j) < \delta \mbox{ for every } 1\leq j\leq k,\] 
where $\hat s_j=\sum_{i=1}^j \hat t_i + \sum_{i=1}^{j-1}\hat \tau_i$. Fix an index $j$, and write $z' = f_{\hat s_{j-1}+ \hat \tau_{j-1}}z$. Consider the image under $h$ of the orbit segment $(z',\hat t_j)$. Then for all $s \in [0, \hat t_j]$, 
\[
d_{GX}(h(f_s z'), h(f_s y_j)) < \epsilon/6.
\]
Thus, writing $h(z')=\gamma'$ and reparameterizing, we see there is a time change $\rho$ so that for all $s \in [0, t_j + 2 \epsilon+2T]$, 
\[
d_{GX}(g_{\rho(s)} \gamma', g_s (g_{-T} \gamma_j)) < \epsilon/6.
\]
Using Lemma \ref{lem:tool2}, we see that for all $s \in [0, t_j + 2 \epsilon+2T]$, 
\[
d_X(\gamma'(\rho(s)), g_{-T}\gamma_j(s)) <\epsilon/3.
\]
Now we apply Proposition \ref{prop:nbhd shadow} to obtain that for all $s \in [0, t_j+2T]$
\[
d_X(\gamma'(s), g_{-T}\gamma_j(s)) <\epsilon.
\]
Next we apply Lemma \ref{lem:shadow in X} to obtain that for all $s \in [T, t_j+T]$, 
\[ 
d_{GX}(g_s\gamma', g_s(g_{-T} \gamma_j)) <2\epsilon,
\]
and thus for all $s \in [0, t_j]$, $d_{GX}(g_s(g_T\gamma'), g_s(\gamma_j)) <2\epsilon$.

Now consider $\gamma = g_T(h(z))$. Noting that $g_T\gamma'$ is an appropriate iterate of $\gamma$ under $(GX, g_t)$, the argument above shows that for each $j$, an appropriate iterate of $\gamma$ is $2 \epsilon$-shadowing for $(\gamma_j, t_j)$. 

It only remains to show that the transition times for $\gamma$ remain controlled. We appeal to Corollary \ref{boundedtransition}, which shows there exists $\kappa$ so that for all $y\in Y$, the image of an orbit segment $(y, \hat \tau)$ under the orbit equivalence $h$ is contained in the orbit segment $(h(y), \kappa)$. 
The segments of $\gamma$ that correspond to transitions between the shadowed orbit segments comprise of images of orbit segments of the form $(y, \hat \tau_i)$ with $\hat \tau_i \leq \hat \tau$, and an additional run of length at most $2T$ coming from the application of Lemma \ref{lem:shadow in X}. Thus the transition times are bounded above by $\kappa+2T$. It follows that $(GX,\{g_t\})$ satisfies weak specification.
\end{proof}


\subsection{Geodesic flow on $\CAT(0)$ spaces}\label{sec:CAT(0)spaces} We now briefly consider the case of non-positive curvature. 

\begin{thm} \label{thm:CAT0}
Let $X$ be a compact, locally $\CAT(0)$, geodesic metric space with fundamental group not isomorphic to $\mathbb Z$ and topologically transitive geodesic flow. If there exists an orbit semi-equivalence $h: \Susp(\Sigma, \sigma)\rightarrow GX$, where $(\Sigma, \sigma)$ is a compact subshift of finite type, then the geodesic flow on $GX$ satisfies the weak specification property.
\end{thm}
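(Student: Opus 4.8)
The plan is to recognise that the proof of Theorem~\ref{thm:spec} never really uses negative curvature: the only geometric inputs are the convexity of the distance function between two geodesics and the global length-minimising property of geodesics in the universal cover, both of which hold under non-positive curvature. The two ingredients that \emph{were} automatic in the $\CAT(-1)$ setting, namely topological transitivity of the geodesic flow (Lemma~\ref{top-transitive}) and the existence of a symbolic coding (Proposition~\ref{symbolic-coding}), are exactly what is now being hypothesised. So the argument splits into (a) verifying that the geometric lemmas survive the passage to $\CAT(0)$, and (b) re-running the proof of Theorem~\ref{thm:spec} with the supplied orbit semi-equivalence.

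For (a): the metric $d_{GX}$ on the space of geodesics is given by the same formula; the constant $\epsilon_0 = \tfrac12\inf\{\ell(\gamma) : \gamma \text{ a closed geodesic}\}$ is still positive, since by compactness and the local $\CAT(0)$ condition there is $\epsilon_1>0$ such that every $\epsilon_1$-ball in $X$ embeds isometrically in $\tilde X$ and is $\CAT(0)$, hence uniquely geodesic and therefore contains no closed geodesic, forcing every closed geodesic to have length at least $2\epsilon_1$; Lemma~\ref{lem:tool2} uses only convexity of $t\mapsto d_{\tilde X}(\tilde\gamma_1(t),\tilde\gamma_2(t))$, a standard property of $\CAT(0)$ spaces; Lemmas~\ref{lem:lift track} and~\ref{lem:shadow in X} only combine this with the triangle inequality and the unit-speed parametrisation of geodesics in $\tilde X$; and Proposition~\ref{prop:nbhd shadow}, as already remarked, needs nothing beyond global length-minimisation of geodesics in $\tilde X$. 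Finally the geodesic flow on $GX$ is fixed-point free (a locally isometric map $\mathbb R\to X$ is neither constant nor periodic of every period), so Proposition~\ref{prop:cts} and Corollary~\ref{boundedtransition} apply. Hence every lemma invoked inside the proof of Theorem~\ref{thm:spec} holds verbatim with $\CAT(-1)$ replaced by $\CAT(0)$.

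For (b): start from the orbit semi-equivalence $h:\Susp(\Sigma,\sigma)\to GX$ given in the hypothesis. A compact metric space carrying a fixed-point-free transitive flow has no isolated points, so $GX$ has none, and arguing exactly as in the last step of the proof of Proposition~\ref{symbolic-coding} (lift a dense orbit, pass to the accumulation set of its forward tail, and use closedness and uniform continuity of $h$) we may replace $\Sigma$ by a transitive subshift of finite type $(\Sigma',\sigma)$ with $h|_{\Susp(\Sigma',\sigma)}$ still surjective onto $GX$. By Proposition~\ref{prop:shift spec}, $\Susp(\Sigma',\sigma)$ has the weak specification property. Now apply Theorem~\ref{thm:spec} with $(Y,\FFF)=\Susp(\Sigma',\sigma)$: its proof goes through word for word, using the $\CAT(0)$ versions of the lemmas from (a), and outputs the weak specification property for $(GX,\{g_t\})$.

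There is thus no genuinely new obstacle: the content of the theorem is the bookkeeping observation above together with the recognition that, in contrast to the $\CAT(-1)$ case, neither transitivity nor a symbolic coding comes for free for $\CAT(0)$ spaces -- flat tori, for instance, fail transitivity -- which is why both appear as hypotheses. The points requiring the most care are the positivity of $\epsilon_0$ and, if one wants to be scrupulous, the surjectivity of the restricted orbit semi-equivalence, handled exactly as in Proposition~\ref{symbolic-coding}.
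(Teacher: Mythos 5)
Your proposal is correct and follows essentially the same route as the paper, which proves Theorem \ref{thm:CAT0} precisely by observing that the $\CAT(-1)$ hypothesis entered the proof of Theorem \ref{thm:spec} only through the symbolic coding (now assumed) and through Proposition \ref{prop:nbhd shadow}, for which $\CAT(0)$ suffices. Your additional checks (positivity of $\epsilon_0$, fixed-point-freeness for Corollary \ref{boundedtransition}, and restriction to a transitive component of $\Sigma$) are exactly the bookkeeping the paper leaves implicit or handles as in Proposition \ref{symbolic-coding} and Theorem \ref{thm:obstruction}.
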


We observe that this follows from the proof given in the previous section, where we used the assumption of $\CAT(-1)$ in only two places; the first was to provide the orbit-equivalent symbolic description of $GX$ (Proposition \ref{symbolic-coding}), which we now \emph{assume} to hold; the second was in the proof of Proposition \ref {prop:nbhd shadow} and we already observed that a $\CAT(0)$ assumption was sufficient for that argument. We conclude that our proof also gives the statement of Theorem \ref{thm:CAT0}.

A class of examples that is covered by Theorem \ref{thm:CAT0} is given by $\CAT(0)$ spaces whose geodesics can be mapped homeomorphically to the geodesics for a $\CAT(-1)$ metric. For example, on a Riemannian surface with genus at least $2$, non-positive curvature metrics can be found so that a single closed geodesic has curvature zero, and geodesics can be mapped homeomorphically to those for a hyperbolic metric.  Such examples are clearly expansive, although we can no longer conclude that H\"older potentials have the Bowen property (see Section \ref{sec:expansivity}).

We can also \emph{rule out} orbit semi-equivalence to a suspension of a shift of finite type in many cases. Let $X$ be a compact, locally $\CAT(0)$ metric space. We say that $\tilde X$ has a \emph{fat $1$-flat} if there exists a geodesic $\gamma$ such that for some $w>0$ the $w$-neighborhood $U = N_w(\gamma)$ of $\gamma$ splits isometrically as $\mathbb R \times Y$. An example of such a space is a Riemannian manifold with non-positive sectional curvature which has an open neighborhood $U$ of a closed geodesic where the sectional curvature is identically zero. See \cite{CLMT} for a study of Riemannian manifolds that admit fat flats, and \cite{CS} for many negative results on hyperbolic-type properties in the special case of Riemannian surfaces which have an embedded flat cylinder. 
We show:
\begin{thm}\label{thm:obstruction}
Let $X$ be a compact locally $\CAT(0)$ metric space with topologically transitive geodesic flow such that $\tilde X$ admits a fat $1$-flat. Then 
\begin{enumerate}
\item the geodesic flow $(GX,\{g_t\})$ does not satisfy weak specification; 
\item there does not exist an orbit semi-equivalence $h: \Susp(\Sigma, \sigma)\rightarrow GX$, where $(\Sigma, \sigma)$ is a compact subshift of finite type.
\end{enumerate}
\end{thm}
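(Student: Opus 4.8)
The plan is to prove both parts of Theorem~\ref{thm:obstruction} by exploiting the isometric splitting $U = N_w(\gamma) \cong \mathbb R \times Y$ to produce ``too many'' nearby geodesics that cannot be simultaneously shadowed by a single orbit of bounded transition time. Since part (2) follows immediately from part (1) together with Theorem~\ref{thm:CAT0} (if an orbit semi-equivalence from a suspension of a compact SFT existed, then Theorem~\ref{thm:CAT0} would force weak specification, contradicting (1)), the real content is establishing (1). Note that topological transitivity is needed only to invoke Theorem~\ref{thm:CAT0} for the reduction of (2) to (1); the failure of specification in (1) is a purely local phenomenon coming from the flat.

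For part (1), I would argue by contradiction: assume $(GX,\{g_t\})$ has weak specification, and let $\tau$ be the maximum transition time associated to some small scale $\delta$ (to be chosen, with $\delta < \epsilon_0$ and $\delta$ much smaller than $w$). The key geometric input is that inside the fat flat, the geodesic $\gamma$ admits a whole $\mathbb R$-family of parallel geodesics: writing points of $U$ as $(s,y) \in \mathbb R \times Y$, the curve $t \mapsto (t, y_0)$ is a geodesic for each fixed $y_0$, and $\gamma$ corresponds to $y_0 = y_\gamma$. Crucially, these parallel geodesics can be translated along $\gamma$, so that one can find, for arbitrarily large $L$, two geodesic segments both of length $L$ lying in the flat, one tracking $\gamma$ and one tracking a parallel copy at transverse distance $r$ for some fixed $0 < r < w$, with the property that any geodesic of $X$ which $\delta$-shadows a long initial piece of the first and then (after a short transition) a long piece of the second must itself contain a long segment inside $U$ that first runs parallel to $\gamma$ at one height and then at another. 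By the isometric product structure, a geodesic segment of $X$ lying in $U$ is (a reparametrization of) a geodesic of $\mathbb R \times Y$, hence of the form $t \mapsto (at + b, \eta(t))$ with $\eta$ a geodesic of $Y$ run at constant speed; combined with the fact that geodesics in $\tilde X$ are globally minimizing (Proposition~\ref{prop:nbhd shadow}), this pins down the transverse $Y$-coordinate to vary at most at a rate forcing the transition time between the two parallel heights to grow linearly in $r$ once we demand the shadowing be at scale $\delta \ll r$. The contradiction is that this required transition time can be made to exceed any fixed $\tau$.

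Concretely, the steps in order: (i) fix the splitting and set up coordinates on $U$, recording that $\gamma$ and a parallel geodesic $\gamma'$ at transverse distance $r$ both have infinite-length portions inside $U$; (ii) choose the scale $\delta$ small relative to $r$ and $w$, and let $\tau = \tau(\delta)$ be the hypothetical maximum transition time; (iii) take the two orbit segments $(\gamma, L)$ and $(g_{L_0}\gamma', L)$ for suitably large $L$ and offset $L_0$ (chosen so the relevant pieces stay in $U$), apply weak specification to get a shadowing geodesic $\beta$ with transition time $\tau_1 \le \tau$; (iv) use Lemma~\ref{lem:tool2} to push the $d_{GX}$-shadowing down to $d_X$-shadowing on $X$, then Lemma~\ref{lem:lift track} to lift everything to $\tilde X$, where the relevant pieces of $\beta$ lie within $\delta$ of $U$ and hence inside a slightly larger flat neighborhood, so $\beta$'s lift is essentially a geodesic of $\mathbb R \times Y$; (v) analyze this product geodesic: its $Y$-projection is a constant-speed $Y$-geodesic, and for it to be within $\delta$ of height $y_\gamma$ over a time interval of length $\approx L$ and then within $\delta$ of height $y_{\gamma'}$ (distance $r$ away in $Y$) over another interval of length $\approx L$, with only a gap of length $\le \tau_1 \le \tau$ in between, the $Y$-geodesic must traverse transverse distance $\ge r - 2\delta$ in time $\le \tau$, which combined with the unit-speed constraint on the ambient geodesic forces $\tau \ge$ (something like) $r - 2\delta$ or else forces the ambient speed in the $\mathbb R$-direction to collapse, contradicting that $\beta$ is a unit-speed geodesic shadowing forward-moving segments; (vi) conclude by choosing $r$ large enough (within $w$) — or rather, since $r < w$ is bounded, choosing $\delta$ and noting that the transition-time lower bound $r - 2\delta$ together with a second pair of segments at offset giving an even larger required gap, or iterating the construction with $m$ parallel heights $y_\gamma, y_{\gamma'}, y_\gamma, \dots$ so that the total ``wasted'' transition time is $\ge m(r-2\delta)$ while only $m-1$ transitions of length $\le \tau$ are allowed, yields the contradiction for $m$ large.

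The main obstacle I expect is step (v): making precise the claim that a $\delta$-shadowing geodesic segment of $X$, once lifted, genuinely behaves like a geodesic of the product $\mathbb R \times Y$ rather than merely staying $\delta$-close to $U$ — one needs $\delta$ small enough (relative to $w$ and the injectivity radius $\epsilon_0$) that the shadowing segment is forced to lie inside $N_w(\gamma)$, and then one must use convexity of the distance function in $\tilde X$ (the same convexity used in Lemma~\ref{lem:tool2}) together with the global minimization property to control how the transverse $Y$-coordinate can evolve; the cleanest route is probably to observe that in $\mathbb R \times Y$ the $Y$-coordinate of any unit-speed geodesic is $\rho$-Lipschitz in the time parameter with $\rho \le 1$, so transverse displacement $\ge r - 2\delta$ genuinely requires time $\ge r - 2\delta$, and this is the estimate that breaks weak specification. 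A secondary subtlety is ensuring the relevant portions of $\gamma$ and its parallel copy remain inside the fat flat $U$ over intervals of length $L$; this is where one uses that the splitting is along the whole geodesic line $\gamma$ (so that $\gamma([-L,L]) \subset U$ for all $L$) and chooses the parallel copy's offset so its relevant sub-segment also lies in $U$.
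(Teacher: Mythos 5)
Your reduction of (2) to (1) via Theorem \ref{thm:CAT0} (after restricting to a transitive component of the shift so the orbit semi-equivalence remains surjective) is exactly what the paper does, and your setup for (1) — shadowing inside the fat flat, converting $d_{GX}$-closeness to $d_X$-closeness via Lemma \ref{lem:tool2}, and using the product structure of $N_w(\gamma)\cong\mathbb R\times Y$ so that a geodesic inside the flat has constant transverse speed — is on the right track. But the quantitative heart of your argument has a genuine gap. The estimate you actually derive in step (v) is only that a transition forcing transverse displacement at least $r-2\delta$ needs time at least $r-2\delta$. Since $r<w$ is bounded a priori, while the maximum transition time $\tau=\tau(\delta)$ furnished by the hypothetical weak specification property can be arbitrarily large — in particular much larger than $w$ — this yields no contradiction. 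Your proposed fix in (vi) does not repair it: with $m$ alternating heights you need total transverse traversal on the order of $m(r-2\delta)$, but you are allowed on the order of $m$ transitions each of length up to $\tau$, so whenever $\tau\geq r-2\delta$ the required traversals fit comfortably, no matter how large $m$ is.

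The missing idea is to exploit the constancy of the transverse speed (which you note but never use) by making the \emph{shadowed segment long relative to the transition time}. This is precisely the paper's trick: it takes $t_1=\tau$ itself, $\delta=w/(10K)$, and a second segment with $\gamma_2(0)\notin N_w(\gamma)$. Since $\gamma^*$ stays within $K\delta=w/10$ of $\gamma$ for time $\tau$, its perpendicular drift over that interval is at most $w/5$, hence its constant perpendicular speed in the flat is at most $w/(5\tau)$; therefore during the subsequent transition of length at most $\tau$ it moves at most another $w/5$ transversally — in particular it never leaves the flat, so the speed bound stays valid — and so it remains within $3w/10$ of $\gamma$ and can never come within $K\delta=w/10$ of $\gamma_2(0)$, contradicting the shadowing requirement via Lemma \ref{lem:tool2}. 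In your two-parallel-geodesics variant the same repair works: shadowing $(\gamma,L)$ while staying within $K\delta$ of height $y_\gamma$ forces the constant transverse speed to be at most about $2K\delta/L$, so the transverse displacement during the transition is at most $2K\delta\tau/L$, which is smaller than $r-2K\delta$ once $L$ is large. That estimate — not the Lipschitz bound and not the iteration over $m$ heights — is what breaks weak specification.
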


\begin{proof}
Suppose that $(GX,\{g_t\})$ satisfies weak specification. Let $\delta=\frac{w}{20}$, and let $\tau(\delta)$ be the corresponding maximum transition time. Take a geodesic $\gamma$ and $w>0$ be such that $N_w(\gamma)$ splits isometrically as $\mathbb R \times Y$. Let $\gamma_1=\gamma$ and $\gamma_2$ be a geodesic with $\gamma_2(0)\notin N_w(\gamma)$.  Let $t_1 = \tau$ and $t_2=1$. For the weak specification property to hold in $GX$, there must be some geodesic $\gamma^*$ which $\delta$-shadows $\gamma$ for time $t_1$, then after transition time at most $\tau$, $\delta$-shadows $\gamma_2$.

By Lemma \ref{lem:tool2}, $d(\gamma(t),\gamma^*(t))<2\delta=w/10$ for all $t\in [0,t_1]$. By the geometry of the flat neighborhood $N_w(\gamma)$, $\gamma^*(t)$ travels at most distance $w/5$ perpendicular to the image of $\gamma$ over $t\in [0,t_1]$, remaining all the while in the $w/10$-neighborhood of $\gamma$. Therefore, over the subsequent $\tau=t_1$ units of time, it can again travel at most distance $w/5$ perpendicularly away from the image of $\gamma$. Therefore at any time $t\in [\tau, 2\tau]$, $\gamma^*(t)$ is at least distance $w/5$ from $\gamma_2(0)$. To fulfill the desired shadowing, for some such $t$, $g_t\gamma^*$ should be within $\delta$ of $\gamma_2$. At such a time, $d_{GX}(g_t\gamma^*, \gamma_2)<\delta=\frac{w}{20}$. Using Lemma \ref{lem:tool2}, we must at this point have $d(\gamma^*(t),\gamma_2(0)) <2\delta = \frac{w}{10}$. This is a contradiction, so $\gamma^*$ cannot achieve the shadowing required. We have shown that $(GX,g_t)$ cannot have the weak specification property. 

Now suppose there is an orbit semi-equivalence $h:\Susp(\Sigma, \sigma) \to GX$, where $(\Sigma, \sigma)$ is a shift of finite type. Restricting $\Sigma$ to a transitive component $\Sigma'$ such that $h: \Sigma' \to GX$ is surjective, the arguments of \S \ref{sec:theorem1} show that $(GX,\{g_t\})$ has weak specification. This is a contradiction, so no such $h:\Susp(\Sigma, \sigma) \to GX$ exists.
\end{proof}

Theorem \ref{thm:obstruction} rigorously confirms the expected phenomenon that a compact shift of finite type can not capture the dynamics of this setting. Beyond uniform hyperbolicity, the best hope to capture the dynamics symbolically is often to code the region of the space that experiences `some' hyperbolicity using a shift of finite type on a countable alphabet. The existence of this kind of symbolic dynamics for smooth flows on three dimensional Riemannian manifolds was established by Lima and Sarig \cite{LS}. This kind of phenomenon is not ruled out by Theorem \ref{thm:obstruction}.

%

\section{Expansivity, the Bowen property, and orbit closing}\label{sec:expansivity}

Before turning to applications of the weak specification property, we require three further properties of the geodesic flow on  a compact $\CAT(-1)$ space.


\subsection{Expansivity} The first property we want to check is expansivity. We say a continuous flow $(X, \FFF)$ is \emph{expansive} if for all $\epsilon>0$, there exists $\delta>0$ such that for all $x,y\in X$ and all continuous $\tau:\mathbb{R}\to \mathbb{R}$ with $\tau(0)=0$, if $d(f_t(x), f_{\tau(t)}(y))<\delta$ for all $t\in \mathbb{R}$, then $y=f_s(x)$ for some $s$, where $|s|< \epsilon$.

\begin{prop}\label{prop:expansive}
The geodesic flow on a compact $\CAT(-1)$ space is expansive.
\end{prop}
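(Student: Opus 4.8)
The plan is to exploit the by-now-standard principle that negatively curved geodesic flows are expansive because two bi-infinite geodesics that stay uniformly close for all time must have the same endpoints at infinity, hence coincide up to a small time shift. The main tool will be Proposition \ref{prop:nbhd shadow} together with Lemma \ref{lem:tool2} and the convexity of the distance function between geodesics in $\tilde X$, which are all available in the excerpt.

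First I would fix $\epsilon>0$ and, aiming to choose a suitable $\delta>0$, suppose we are given $\gamma_1,\gamma_2\in GX$ and a continuous $\tau:\mathbb R\to\mathbb R$ with $\tau(0)=0$ such that $d_{GX}(g_t\gamma_1, g_{\tau(t)}\gamma_2)<\delta$ for all $t\in\mathbb R$. By Lemma \ref{lem:tool2}, this gives $d_X(\gamma_1(t),\gamma_2(\tau(t)))<K\delta$ for all $t$. Reparametrizing appropriately (restricting to finite windows $[-T,T]$ and taking $\tau$ restricted there as a time change), Proposition \ref{prop:nbhd shadow} — applied on $[-T,T]$ and letting $T\to\infty$ — yields $d_X(\gamma_1(t),\gamma_2(t))<3K\delta$ for all $t\in\mathbb R$, after possibly absorbing a shift of size $O(K\delta)$ in the parametrization of $\gamma_2$. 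In other words, up to replacing $\gamma_2$ by $g_s\gamma_2$ for some small $|s|=O(K\delta)$, the two geodesics stay within $3K\delta$ of each other for all time on $X$.

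Next I would lift to the universal cover: by Lemma \ref{lem:lift track}, choosing $\delta$ small enough that $3K\delta<\epsilon_0$, there are lifts $\tilde\gamma_1,\tilde\gamma_2$ with $d_{\tilde X}(\tilde\gamma_1(t),\tilde\gamma_2(t))<3K\delta$ for all $t\in\mathbb R$. Two bi-infinite geodesics in the $\CAT(-1)$ space $\tilde X$ that remain at bounded distance for all $t$ have the same pair of endpoints $\gamma^\pm\in\partial^\infty\tilde X$; since $\tilde X$ is $\CAT(-1)$ there is a \emph{unique} geodesic with a given pair of distinct endpoints, so $\tilde\gamma_2$ is a time-reparametrization of $\tilde\gamma_1$, and since both are unit-speed, $\tilde\gamma_2 = g_a\tilde\gamma_1$ for some $a\in\mathbb R$. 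The uniform bound $d_{\tilde X}(\tilde\gamma_1(t),\tilde\gamma_1(t+a))=|a|<3K\delta$ forces $|a|<3K\delta$. Descending back to $GX$ and combining with the earlier small shift, we get $\gamma_2 = g_s\gamma_1$ with $|s|$ bounded by a constant multiple of $K\delta$; choosing $\delta$ so that this constant multiple of $K\delta$ is less than $\epsilon$ completes the argument.

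The main obstacle I anticipate is the bookkeeping around the time-change function $\tau$: Proposition \ref{prop:nbhd shadow} is stated for a time change on a compact interval $[0,T_2]$, whereas expansivity gives an a priori arbitrary continuous $\tau$ on all of $\mathbb R$ with no monotonicity. I would handle this by first observing that the uniform closeness on $X$, via the convexity of $t\mapsto d_{\tilde X}(\tilde\gamma_1(t),\tilde\gamma_2(t))$ in the universal cover, already forces the lifts to have matching endpoints at infinity — so one can bypass delicate control of $\tau$ entirely and argue directly from the ``bounded distance implies equal endpoints, hence equal geodesic up to shift'' principle in $\CAT(-1)$ geometry, using $\tau$ only to know $d_X(\gamma_1(t),\gamma_2(\tau(t)))$ is small and then passing to the sup over a sliding window to kill the reparametrization. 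A secondary point requiring a little care is ensuring the constant relating the final shift $s$ to $\delta$ is genuinely independent of $\gamma_1,\gamma_2$, which follows since $K$, $\epsilon_0$, and the comparison constants are global.
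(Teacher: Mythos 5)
Your proposal is correct and follows essentially the same route as the paper's proof: Lemma \ref{lem:tool2} to pass from $d_{GX}$ to $d_X$, Proposition \ref{prop:nbhd shadow} to remove the time change, Lemma \ref{lem:lift track} to lift to $\tilde X$, then the ``bounded distance for all time implies equal endpoints at infinity, hence equal geodesics up to a small shift'' principle, with $|s|$ controlled by choosing $\delta$ small. If anything, you are slightly more careful than the paper, which applies Proposition \ref{prop:nbhd shadow} directly without commenting on the fact that the expansivity reparametrization $\tau$ need not be monotone.
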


\begin{proof}
Consider any $\tau:\mathbb{R}\to\mathbb{R}$ with $\tau(0)=0$. Suppose that $\gamma_1, \gamma_2 \in GX$ with $d_{GX}(g_t\gamma_1, g_{\tau(t)}\gamma_2)<\delta$ for all $t$. Then, by Lemma \ref{lem:tool2}, $d_X(\gamma_1(t),\gamma_2(\tau(t)))<2\delta$ for all $t$. By Proposition \ref{prop:nbhd shadow}, it follows that $d_X(\gamma_1(t),\gamma_2(t)) <6\delta$. Choosing $\delta$ so small that $6\delta<\epsilon_0$, we may use Lemma \ref{lem:lift track} and lift the geodesics $\gamma_1$ and $\gamma_2$ to the universal cover in such a way that $d_{\tilde X}(\tilde\gamma_1(t), \tilde\gamma_2(t))<6\delta$ for all $t$. From the definition of the boundary at $\infty$, it follows that $\tilde\gamma_1(\infty) = \tilde\gamma_2(\infty)$ and $\tilde\gamma_1(-\infty) = \tilde\gamma_2(-\infty)$. Hence $\gamma_2(t)=\gamma_1(t+s)$ for some $s$. Since $d_{GX}(\gamma_1, \gamma_2)< \delta$, a straightforward calculation with the definition of $d_{GX}$ implies that given a fixed $\epsilon$, we can choose $\delta$ small enough so that $|s|<\epsilon$.
\end{proof}


\subsection{Bowen property} 

The second property we want is a dynamical regularity property for functions on the space $GX$.
\begin{defn} \label{def:bowen} 
Let $(X, \FFF)$ be a continuous flow.  A continuous function $\varphi$ on $X$ is said to have the \emph{Bowen property} if  there exists $V>0$ so that for any sufficiently small $\epsilon>0$,
\[ d(f_t(x), f_t(y))<\epsilon \mbox{ for all } t\in [0,S] \implies \left|\int_0^S \varphi(f_tx)dt - \int_0^S \varphi(f_ty)dt\right|<V \]
for any $x,y\in X$ and any $S>0$.
\end{defn}

We show that H\"older functions on $GX$ satisfy this property.

\begin{prop} \label{prop:bowen} 
If $\varphi$ is a H\"older continuous function on $GX$, then $\varphi$ satisfies the Bowen property for the geodesic flow $g_t$.
\end{prop}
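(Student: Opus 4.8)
The strategy is to pull everything up to the universal cover, where the key geometric input is the exponential convergence of nearby geodesics in a $\CAT(-1)$ space, and then exploit H\"older continuity of $\varphi$ together with Lemma~\ref{lem:shadow in X} to convert closeness in $X$ into closeness in $GX$. First I would fix the H\"older data: say $|\varphi(\gamma_1)-\varphi(\gamma_2)| \leq C\, d_{GX}(\gamma_1,\gamma_2)^\alpha$ for constants $C>0$, $\alpha \in (0,1]$. Suppose $d_{GX}(g_t\gamma_1, g_t\gamma_2) < \epsilon$ for all $t \in [0,S]$, with $\epsilon < \epsilon_0$. By Lemma~\ref{lem:tool2} this gives $d_X(\gamma_1(t),\gamma_2(t)) < K\epsilon$ for all $t \in [0,S]$; by Lemma~\ref{lem:lift track} we may lift to $\tilde X$ with $d_{\tilde X}(\tilde\gamma_1(t),\tilde\gamma_2(t)) < K\epsilon$ on $[0,S]$.

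The heart of the argument is the convexity/exponential-decay estimate for distances between geodesics in a $\CAT(-1)$ space: the function $t \mapsto d_{\tilde X}(\tilde\gamma_1(t),\tilde\gamma_2(t))$ is convex, and moreover in the $\CAT(-1)$ setting one has the sharper bound that if two geodesic segments are within $\rho$ at the two endpoints of an interval of length $L$, then at a point at distance $u$ from the nearer endpoint (and $L-u$ from the farther) the distance decays roughly like $\rho\, e^{-\min(u, L-u)}$ (the precise statement being a comparison with $\mathbb H^2$, where distance between geodesic segments decays exponentially away from the endpoints). Applying this with the endpoints $t=0$ and $t=S$, where the distance is at most $K\epsilon$, I get $d_{\tilde X}(\tilde\gamma_1(t),\tilde\gamma_2(t)) \leq K\epsilon \cdot e^{-\min(t, S-t)}$ for $t \in [0,S]$. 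Now I would split the integral $\int_0^S (\varphi(g_t\gamma_1) - \varphi(g_t\gamma_2))\,dt$ and bound the integrand. For $t$ at distance at least $T(\epsilon) = \log(\epsilon^{-1})$ from both endpoints, Lemma~\ref{lem:shadow in X} (applied on the subinterval where the $X$-distance is below the relevant threshold) gives $d_{GX}(g_t\gamma_1, g_t\gamma_2) < 2 K\epsilon e^{-\min(t,S-t)+\text{const}}$ or something comparable, so the H\"older bound makes $|\varphi(g_t\gamma_1)-\varphi(g_t\gamma_2)| \lesssim e^{-\alpha \min(t, S-t)}$, whose integral over $[0,S]$ is bounded by $2\int_0^\infty e^{-\alpha s}\,ds = 2/\alpha$, a constant independent of $S$. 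For $t$ within $T(\epsilon)$ of an endpoint — two intervals of total length at most $2T(\epsilon) = 2\log(\epsilon^{-1})$ — I bound $|\varphi(g_t\gamma_1)-\varphi(g_t\gamma_2)|$ crudely by $2\|\varphi\|_\infty$, contributing at most $4\|\varphi\|_\infty \log(\epsilon^{-1})$. Since "sufficiently small $\epsilon$" is what appears in Definition~\ref{def:bowen}, I can fix $\epsilon$ once (say $\epsilon = \epsilon_0/2$) and absorb $4\|\varphi\|_\infty\log(\epsilon^{-1})$ into the constant $V$; thus $V := 2C/\alpha + 4\|\varphi\|_\infty \log(\epsilon^{-1})$ works, with no dependence on $S$, $x$, $y$.

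\textbf{Main obstacle.} The delicate point is establishing the exponential-decay estimate for $d_{\tilde X}$ between geodesic segments away from their endpoints, and keeping the bookkeeping of constants clean enough that Lemma~\ref{lem:shadow in X} can be invoked on the appropriate sub-window. Lemma~\ref{lem:shadow in X} as stated requires $X$-closeness on a slightly larger interval $[a-T, b+T]$ to conclude $GX$-closeness on $[a,b]$; I need to make sure that, for $t$ well inside $[0,S]$, the interval $[t-T(\epsilon), t+T(\epsilon)]$ on which I need the $X$-bound still lies inside $[0,S]$ and the bound there is good enough — this is why I separate out the two boundary strips of width $T(\epsilon)$ and treat them with the trivial $\|\varphi\|_\infty$ estimate. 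An alternative, cleaner route that avoids the sharp $\CAT(-1)$ decay estimate: only use convexity (hence $d_{\tilde X}(\tilde\gamma_1(t),\tilde\gamma_2(t)) \leq K\epsilon$ throughout, with no decay), apply Lemma~\ref{lem:shadow in X} to get $d_{GX}(g_t\gamma_1,g_t\gamma_2) < 2K\epsilon$ on $[T(\epsilon), S - T(\epsilon)]$, bound $|\varphi(g_t\gamma_1)-\varphi(g_t\gamma_2)| \leq C(2K\epsilon)^\alpha$ there — but this integrates to $C(2K\epsilon)^\alpha S$, which grows with $S$ and is \emph{not} good enough. So the exponential decay really is needed, and proving it carefully (via the $\mathbb H^2$ comparison inequality applied to the quadrilateral with vertices $\tilde\gamma_1(0), \tilde\gamma_2(0), \tilde\gamma_2(S), \tilde\gamma_1(S)$, or by iterating the convexity/CAT$(-1)$ inequality on dyadic subintervals) is the crux of the proof.
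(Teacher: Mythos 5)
Your overall architecture matches the paper's proof closely (lift via Lemma \ref{lem:lift track}, comparison with $\mathbb H^2$, exponential decay away from the endpoints, boundary strips of width $T(\epsilon)$ handled crudely, middle interval handled by H\"older plus the decay, and conversion from $d_X$ back to $d_{GX}$ via Lemma \ref{lem:shadow in X}), and you correctly identify that convexity alone cannot work. However, there is a genuine gap at the central step: the estimate $d_{\tilde X}(\tilde\gamma_1(t),\tilde\gamma_2(t)) \leq K\epsilon\, e^{-\min(t,S-t)}$ for the \emph{synchronized} parametrization is false. The exponential decay in $\mathbb H^2$ holds for the distance between the geodesic segments (nearest-point distance), not for points carrying the same time parameter: the two parametrizations can be offset longitudinally by an amount up to order $K\epsilon$, and that offset does not decay. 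Concretely, take $\gamma_2 = g_r\gamma_1$ with $r$ small; then $d_{GX}(g_t\gamma_1,g_t\gamma_2)<\epsilon$ for all $t$ and $d_{\tilde X}(\tilde\gamma_1(t),\tilde\gamma_2(t))=r$ for \emph{all} $t\in[0,S]$, contradicting your claimed bound in the middle of a long interval. With only the (true) bound $d_{GX}(g_t\gamma_1,g_t\gamma_2)\lesssim r$ on the middle, your pointwise H\"older estimate integrates to something of order $C r^\alpha S$, which grows with $S$ --- exactly the failure mode you yourself flagged for the ``convexity-only'' route. So as written the middle-interval estimate does not close.

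The missing idea, which is how the paper proceeds, is a time reparametrization before invoking the decay: translate the comparison geodesic $c_2$ by a constant $r$ so that the two segments carry the same parameter at their point of closest approach in $\mathbb H^2$; since the flow is unit speed, $|r|\leq K\epsilon$. One then gets $d_{\mathbb H^2}(c_1(t),c_2(t+r))< K\epsilon\, e^{-\min(t,S'-t)}$ with $S'=S-r$, pushes this through the $\CAT(-1)$ comparison to $\tilde X$ and down to $X$, and compares $\int_0^{S'}\varphi(g_t\gamma_1)\,dt$ with $\int_0^{S'}\varphi(g_t(g_r\gamma_2))\,dt$ by your three-interval decomposition; the discrepancy caused by the shift is accounted for separately by the term $2r\|\varphi\|\leq 2K\epsilon\|\varphi\|$, which is absorbed into $V$. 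With that modification (decay for the $r$-shifted parametrization plus the $2r\|\varphi\|$ bookkeeping), your estimates for $I_1$, $I_2$, $I_3$ go through essentially as you wrote them, and your remark that one may fix a single sufficiently small $\epsilon$ and absorb the boundary-strip contribution into $V$ is consistent with Definition \ref{def:bowen}.
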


\begin{proof}
We  prove that for any $V>0$, there exists an $\epsilon>0$ such that 
\[ 
d_{GX}(g_t(\gamma_1), g_t(\gamma_2))<\epsilon \mbox{ for all } t\in [0,S] \implies \left|\int_0^S \varphi(g_t\gamma_1)dt - \int_0^S \varphi(g_t\gamma_2)dt\right|<V 
\]
for any $\gamma_1, \gamma_2 \in GX$ and any $S>0$. The idea of the proof is that, using the $\CAT(-1)$ property for a comparison with $\mathbb{H}^2$, geodesics in $X$ which stay close over $[0,S]$ are in fact exponentially close over that range, from which the result follows.  The need to move between the metrics on $GX$ and $X$ adds some technicalities to the proof.

Let $V>0$ be given, and let $C, \alpha>0$ be the H\"older constants for $\varphi$ so that $|\varphi(\gamma_1,\gamma_2)|<Cd_{GX}(\gamma_1,\gamma_2)^\alpha$. We fix $\epsilon >0$ to be specified later. Suppose that $d_{GX}(g_t\gamma_1, g_t\gamma_2)<\epsilon$  for $t\in [0,S]$. By Lemma \ref{lem:tool2}, $d_X(\gamma_1(t), \gamma_2(t))< 2\epsilon$ for $t\in [0,S]$. By Lemma \ref{lem:lift track}, assuming that $2\epsilon<\epsilon_0$, lifting to the universal cover,  we have $d_{\tilde X}(\tilde \gamma_1(t), \tilde \gamma_2(t))<2\epsilon$ for $t\in [0,S]$. 

We construct a comparison pair of geodesic segments $c_1(t), c_2(t)$ in $\mathbb{H}^2$ with lengths $S$ and with distance at most $2\epsilon$ between their endpoints using the pair of triangles shown in Figure \ref{fig:quad}. By convexity of the distance function, $d_{\mathbb{H}^2}(c_1(t),c_2(t))<2\epsilon$. We translate the time parameter for $c_2$ by a constant $r$ so that at the point of their nearest approach in $\mathbb{H}^2$, both have the same time parameter. By interchanging the roles of $c_1$ and $c_2$ if necessary, we can assume that $r \geq 0$. We write $S':=S-r$.  Then, by a standard argument for the behavior of geodesics in $\mathbb{H}^2$, we have that
\[ d_{\mathbb{H}^2}(c_1(t), c_2(t+r))< 2\epsilon e^{-\min\{ t, S'-t \}} \mbox{ for all } t\in [0,S']. \]
Applying the $\CAT(-1)$ property, we have that
\[ d_{\tilde X}(\tilde \gamma_1(t), \tilde\gamma_2(t+r))< 2\epsilon e^{-\min\{ t, S'-t \}} \mbox{ for all } t\in [0,S'], \]
and we can push this estimate back down to $X$.

\begin{center}

\setlength{\unitlength}{.5pt}

\begin{picture}(600,250)(-300,-100)

\put(-230,100){$\tilde X$}
\put(230,100){$\mathbb{H}^2$}

\qbezier(-350,80)(-170,-20)(-50,80)	
\qbezier(-380,-80)(-200,20)(-80,-80)

\qbezier(-350,80)(-270,0)(-380,-80)
\qbezier(-50,80)(-150,0)(-80,-80)
\qbezier(-350,80)(-280,30)(-210,0)
\qbezier(-210,0)(-100,-60)(-80,-80)

\put(-270,50){$\tilde \gamma_1$}
\put(-300,-60){$\tilde \gamma_2$}

\put(-210,31){\circle*{5}}
\put(-210,-31){\circle*{5}}
\put(-220,4){\circle*{5}}

\qbezier(-210,31)(-215,17)(-220,4)
\qbezier(-220,4)(-215,-13)(-210,-31)

\put(-210,41){$p_1$}
\put(-235,-6){$q$}
\put(-210,-48){$p_2$}

\qbezier(80,80)(200,20)(380,80)
\qbezier(50,-80)(170,-20)(350,-80)
\qbezier(80,80)(80,0)(50,-80)
\qbezier(380,80)(350,0)(350,-80)

\qbezier(80,80)(210,0)(350,-80)

\put(210,50){\circle*{5}}
\put(210,-51){\circle*{5}}
\put(210,2){\circle*{5}}

\put(210,61){$\bar p_1$}
\put(190,-15){$\bar q$}
\put(210,-68){$\bar p_2$}

\qbezier(210,50)(210,0)(210,-51)

\put(140,65){$c_1$}
\put(140,-70){$c_2$}

\multiput(80,80)(0,-20){8}{\line(0,-1){10}}
\multiput(350,-80)(0,20){8}{\line(0,1){10}}

\put(80,-68){\circle*{5}}
\put(70,-95){$c_2(r)$}

\put(350,71){\circle*{5}}
\put(318,85){$c_1(S')$}

\end{picture}

\begin{figure}[h]
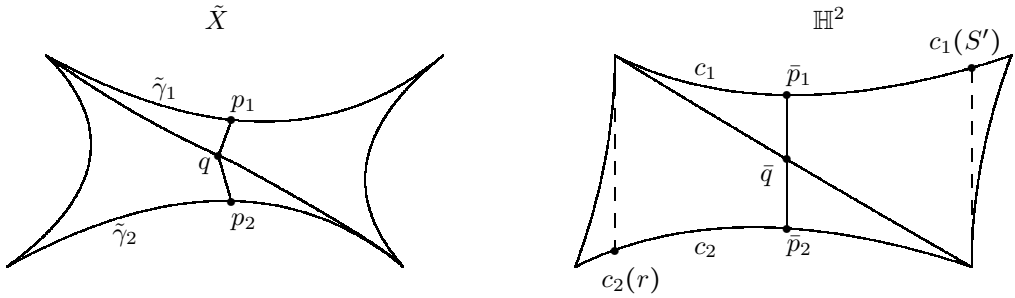

\caption{Comparison quadrilateral for Proposition \ref{prop:bowen}. Corresponding sides in the two quadrilaterals have the same length. By the $\CAT(-1)$ condition, $d_{\tilde X}(p_1,p_2)\leq d_{\mathbb{H}^2}(\bar p_1,\bar p_2)$.}\label{fig:quad}
\end{figure}

\end{center}

Next, using Lemma \ref{lem:shadow in X} we see that that there is a constant $T=T(4\epsilon)$ such that
\[d_{GX}(g_t\gamma_1, g_{t+r}\gamma_2) < 2 d_X(\gamma_1(t),\gamma_2(t+r) )< 4\epsilon e^{-\min\{ t, S'-t \}}\mbox{ for all } t\in [T, S'-T].\]
We recall from Lemma \ref{lem:shadow in X} that for small $\epsilon$, we can take $T(4\epsilon) = -\log(4\epsilon)$, and thus  $\lim_{\epsilon \to 0} \epsilon^\alpha T(4\epsilon) = 0.$ We assume $\epsilon$ is so small that $2C (3\epsilon)^\alpha T<V/3$.  

To control $|\int_0^S \varphi(g_t\gamma_1) dt - \int_0^S \varphi(g_t\gamma_2) dt|$, we first note that
\[
\left|\int_0^S \varphi(g_t\gamma_1) dt - \int_0^S \varphi(g_t\gamma_2) dt\right| \leq \left|\int_0^{S'} \varphi(g_t\gamma_1) dt - \int_r^S \varphi(g_t\gamma_2) dt \right| +2r\| \varphi \|.
\]
Since the flow is unit speed, $r \leq 2\epsilon$, and therefore, choosing $\epsilon$ so small that $4\epsilon \|\varphi \|<V/3$, and writing $\gamma_2' = g_r \gamma_2$, it suffices to control $|\int_0^{S'} \varphi(g_t\gamma_1) dt - \int_0^{S'} \varphi(g_{t}\gamma_2') dt|$.

We cover $[0,S']$ by the intervals $I_1=[0,T], I_2=(T, S'-T)$, and $I_3=[S'-T, S']$. Note that $I_2$ may be empty and $I_1$ and $I_3$ may overlap, depending on the values of $S'$ and $\epsilon$. Then,
\begin{align}
\Big| \int_0^{S'} \varphi(g_t\gamma_1) dt - \int_0^{S'} &\varphi(g_t\gamma'_2) dt \, \, \Big| \leq \int_0^{S'}|\varphi(g_t\gamma_1)-\varphi(g_t\gamma'_2)|dt \nonumber \\
&\leq \int_{I_1} |\varphi(g_t\gamma_1)-\varphi(g_t\gamma'_2)|dt + \int_{I_3} |\varphi(g_t\gamma_1)-\varphi(g_t\gamma'_2)|dt \nonumber \\
& + \int_{I_2} |\varphi(g_t\gamma_1)-\varphi(g_t\gamma'_2)|dt \nonumber.
\end{align}

Over $I_1$ and $I_3$, $d_{GX}(g_t\gamma_1, g_t\gamma'_2)< d_{GX}(g_t\gamma_1, g_t\gamma_2)+ d_{GX}(g_t\gamma_2, g_t\gamma'_2)<\epsilon+2\epsilon$, so by the H\"older condition, $|\varphi(g_t\gamma_1)-\varphi(g_t\gamma'_2)|\leq C (3\epsilon)^\alpha$. Thus
\[ \int_{I_1} |\varphi(g_t\gamma_1)-\varphi(g_t\gamma'_2)|dt + \int_{I_3} |\varphi(g_t\gamma_1)-\varphi(g_t\gamma'_2)|dt < 2C(3\epsilon)^\alpha T< V/3.\]

To bound the integral over $I_2$, we use the H\"older property again to obtain
\begin{align}
	\int_{I_2} |\varphi(g_t\gamma_1)-\varphi(g_t\gamma'_2)|dt &< \int_{I_2} Cd_{GX}(g_t\gamma_1, g_t \gamma'_2)^\alpha dt \nonumber \\
	& < \int_{I_2} C 4^\alpha \epsilon^\alpha e^{-\alpha \min\{t,S-t\}}dt \nonumber \\
	& < \epsilon^\alpha \int_0^\infty C 4^\alpha e^{-\alpha \min\{t,S-t\}}dt \nonumber<V/3,
\end{align}
where the last inequality comes from making a sufficiently small choice of $\epsilon$. Thus, $|\int_0^{S'} \varphi(g_t\gamma_1) dt - \int_0^{S'} \varphi(g_{t}\gamma_2') dt|<2V/3$, and so $|\int_0^S \varphi(g_t\gamma_1) dt - \int_0^S \varphi(g_t\gamma_2) dt|<V$.
\end{proof}

%

\subsection{Orbit closing lemma} \label{closing}

We prove a closing lemma for our setting, which gives what we call the \emph{weak periodic orbit closing property}. The idea is that for the suspension flow over a shift of finite type, an orbit segment can always be approximated by a periodic orbit. We show that this property passes to $GX$ using the orbit semi-equivalence. For a flow $(X, \FFF)$, we write $\Per(t)$ for the set of closed orbits of least period at most $t$.
\begin{defn} \label{orbitclose}
A continuous flow $(X, \FFF)$ satisfies the \emph{weak periodic orbit closing property} if for all $\epsilon>0$, there exists $R>0$ so that for any orbit segment $(\gamma, t)$, there exists $\gamma^\ast \in \Per(t+R)$ so that $d_t(\gamma, \gamma^\ast)<\epsilon$.
\end{defn}
\begin{lem} \label{lem:closing}
The geodesic flow on a compact $\CAT(-1)$ space satisfies the weak periodic orbit closing property.
\end{lem}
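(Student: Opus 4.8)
The plan is to mimic the two-step strategy of the proof of Theorem~\ref{thm:spec}: first establish the weak periodic orbit closing property for the model flow $\Susp(\Sigma,\sigma)$, then transport it to $GX$ along the orbit semi-equivalence $h\colon\Susp(\Sigma,\sigma)\to GX$ of Proposition~\ref{symbolic-coding}, using the $\CAT(-1)$ geometry (Lemma~\ref{lem:tool2}, Proposition~\ref{prop:nbhd shadow}, Lemma~\ref{lem:shadow in X}) to compensate for the fact that $h$ is merely an orbit semi-equivalence. The only genuinely new feature, compared with Theorem~\ref{thm:spec}, is the need to control the \emph{period} of the orbit we produce in $GX$; I expect this to be the main obstacle.

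For the first step I would fix a scale $\delta>0$ and use that $(\Sigma,\sigma)$ is a transitive SFT to get $\tau_\Sigma\in\mathbb N$ such that any two symbols are joined by an admissible word of length at most $\tau_\Sigma$. Given an orbit segment $((x,a),t)$ for $\Susp(\Sigma,\sigma)$, set $n=\lfloor t\rfloor+2+M$ with $M=M(\delta)$ chosen so that $2^{-M}$ is small enough relative to $\delta$; let $w=x_0\cdots x_{n-1}$, let $u$ be an admissible word of length $\le\tau_\Sigma$ joining $x_{n-1}$ to $x_0$, and let $p=(wu)^\infty$ with $p_0=x_0$. Then $p$ is $\sigma$-periodic of period $n+|u|\le n+\tau_\Sigma$ and agrees with $x$ on coordinates $0,\dots,n-1$, so $d_n(p,x)\le 2^{-M}$ in the base; by the Bowen--Walters estimate used in the proof of Proposition~\ref{prop:shift spec}, $(p,a)$ lies in $B_{n-1}((x,a),\delta)$, hence $\delta$-shadows $((x,a),t)$ since $n-1>t$. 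As the roof function is $1$, $(p,a)$ lies on a closed orbit of $\Susp(\Sigma,\sigma)$ of period at most $n+\tau_\Sigma\le t+R_\Sigma$, where $R_\Sigma:=2+M(\delta)+\tau_\Sigma$ depends only on $\delta$. Thus $\Susp(\Sigma,\sigma)$ has the weak periodic orbit closing property.

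For the second step I would run the proof of Theorem~\ref{thm:spec} with a single orbit segment. Given $\epsilon>0$ and $(\gamma,t)$ for $GX$, let $K$ and $T=T(\epsilon)$ be as in Lemma~\ref{lem:tool2} and Lemma~\ref{lem:shadow in X}, and choose $\delta>0$ by uniform continuity of $h$ so that $d_Y(y_1,y_2)<\delta$ forces $d_{GX}(h(y_1),h(y_2))<\epsilon/3K$. Fix a lift $(y,\hat t)$ under $h$ of $(g_{-T}\gamma,\,t+2\epsilon+2T)$, apply the closing property of $\Susp(\Sigma,\sigma)$ at scale $\delta$ to $(y,\hat t)$ to obtain a periodic point $z$ with $d_{\hat t}(z,y)<\delta$ and least period $p_z\le\hat t+R_\Sigma$, and push $h(z)$ down exactly as in Theorem~\ref{thm:spec}: via Lemma~\ref{lem:tool2}, Proposition~\ref{prop:nbhd shadow}, and Lemma~\ref{lem:shadow in X} one gets that $\gamma^\ast:=g_T(h(z))$ satisfies $d_{GX}(g_s\gamma^\ast,g_s\gamma)<2\epsilon$ for all $s\in[0,t]$. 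Since $z$ is periodic and $h$ intertwines the flows up to time change (so $g_{\tau(z,p_z)}(h(z))=h(z)$ in the notation of Proposition~\ref{prop:cts}), $h(z)$ and hence $\gamma^\ast$ lie on a closed orbit of $GX$ whose least period divides $\tau(z,p_z)$.

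The crux is to bound $\tau(z,p_z)$ by $t+R$ with $R=R(\epsilon)$ independent of $t$. I would split $(z,p_z)$ into $(z,\hat t)$ and $(f_{\hat t}z,\,p_z-\hat t)$. For the short piece, $p_z-\hat t\le R_\Sigma$, so Corollary~\ref{boundedtransition} at the fixed time $R_\Sigma$ bounds the length of its $h$-image by $\kappa(R_\Sigma)$, a constant depending only on $\epsilon$. For the long piece, the estimates already used to build $\gamma^\ast$ show that $h$ on $(z,\hat t)$ covers the orbit of $g_{-T}\gamma$ under a time change $\rho\colon[0,\,t+2\epsilon+2T]\to[0,\tau(z,\hat t)]$ with $d_X\big((h(z))(\rho(s)),(g_{-T}\gamma)(s)\big)<\epsilon/3$; Proposition~\ref{prop:nbhd shadow} applied to this data yields, as the byproduct of its proof comparing the two endpoints, that $|\tau(z,\hat t)-(t+2\epsilon+2T)|<\epsilon$. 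Hence the least period of $\gamma^\ast$ is at most $\tau(z,\hat t)+\kappa(R_\Sigma)\le t+\big(3\epsilon+2T+\kappa(R_\Sigma)\big)$, so one may take $R=3\epsilon+2T+\kappa(R_\Sigma)$ (running the argument with $\epsilon/2$ in place of $\epsilon$ to match Definition~\ref{orbitclose} on the nose). This would complete the proof.
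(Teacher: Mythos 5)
Your proposal follows essentially the same route as the paper: close the orbit segment to a periodic orbit in $\Susp(\Sigma,\sigma)$ using transitivity of the SFT, push it down to $GX$ exactly as in the proof of Theorem \ref{thm:spec} via Lemma \ref{lem:tool2}, Proposition \ref{prop:nbhd shadow} and Lemma \ref{lem:shadow in X}, and control the excess period by Corollary \ref{boundedtransition}; in fact your accounting of the period (splitting $(z,p_z)$ into $(z,\hat t)$ and the leftover, and using the $|T_1-T_2|<2\epsilon$ byproduct of Proposition \ref{prop:nbhd shadow}) is a bit more explicit than the paper's. The only flaw is in your symbolic closing step: since $\Sigma$ is a two-sided shift, having $p$ agree with $x$ only on coordinates $0,\dots,n-1$ does \emph{not} give $d_n(p,x)\le 2^{-M}$ (at $j=0$ or $j=n-1$ the shifted points may already differ at distance $\tfrac12$ because of coordinates just outside the block); you need $p$ to agree with $x$ on coordinates $-M,\dots,n-1+M$, which is achieved by closing up that longer block and only changes $R_\Sigma$ to $n$-independent $2+2M(\delta)+\tau_\Sigma$, so the argument goes through after this trivial repair.
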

\begin{proof}
The proof uses many of the same ideas as the proof of Theorem \ref{thm:spec}. Let $\epsilon>0$ be given and fix an orbit segment $(\gamma, t)$ for $(GX, \{g_t\})$.  Let $h: \Susp(\Sigma,\sigma) \to GX$ be the orbit semi-equivalence provided by Proposition \ref{symbolic-coding}, where $\Sigma$ is a topologically transitive shift of finite type.  Let $T=T(\epsilon)$  be the constant from Lemma \ref{lem:shadow in X} and let $\delta>0$ satisfy that $y_1, y_2 \in \Susp(\Sigma,\sigma)$,  $d(y_1,y_2)<\delta$ implies $d_{GX}(h(y_1), h(y_2))<\epsilon/6$.

Fix a lift $(y, \hat t)$ under $h$ of $(g_{-T} \gamma, t+2\epsilon +2T)$, so
\[
\{h(\phi_s y): s \in [0, \hat t]\} = \{g_s \gamma: s \in[-T, t+T+ 2 \epsilon]\},
\]
where $\{\phi_s\}$ is the suspension flow. On $\Susp(\Sigma,\sigma)$, it is easy to check that we can close orbit segments to periodic orbits. That is, for all $\delta>0$, there exists $\hat R$ so that for all $(y, \hat t)$, there exists $y'$ so that $d_t(y, y')<\delta$ and $y'$ is periodic with period at most $\hat t + \hat R$. This property follows from the corresponding fact for $\Sigma$. We take such a point $y'$ for the orbit segment $(y, t)$ and $\delta>0$ under consideration. Then for all $s \in [0, \hat t]$, $d_{GX}(h(\phi_s y'), h(\phi_s y)) < \epsilon/6$. Thus,  writing $\gamma':=h(y')$ and reparameterizing, we see there is a time change $\rho$ so that for all $s \in [0, t + 2 \epsilon+2T]$, 
\[
d_{GX}(g_{\rho(s)} \gamma', g_s (g_{-T} \gamma)) < \epsilon/6.
\]
Using Lemma \ref{lem:tool2}, we see that for all $s \in [0, t + 2 \epsilon+2T]$, 
\[
d_X(\gamma'(\rho(s)), g_{-T}\gamma(s)) <\epsilon/3.
\]
Now we apply Proposition \ref{prop:nbhd shadow} to obtain that for all $s \in [0, t+2T]$
\[
d_X(\gamma'(s), g_{-T}\gamma(s)) <\epsilon.
\]
Now we apply Lemma \ref{lem:shadow in X} to obtain that for all $s \in [T, t+T]$, 
\[ 
d_{GX}(g_s\gamma', g_s(g_{-T} \gamma)) <2\epsilon,
\]
and thus for all $s \in [0, t]$, $d_{GX}(g_s(g_T\gamma'), g_s(\gamma)) <2\epsilon.$ We let $\gamma^\ast = g_T\gamma'$, and we have shown that $d_t(\gamma^\ast, \gamma)<2 \epsilon$.

Now it is clear that $\gamma^\ast$ is a periodic orbit, so it only remains to show that its period is controlled. Let $t^\ast$ be the period of $\gamma^\ast$. We observe that the orbit segment $(g_t \gamma^\ast, t^\ast-t)$ is a subset of the image under $h$ of the orbit segment $(\phi_{\hat t}y', R')$. So we let $R$ be a value so that for all $y \in \Susp(\Sigma,\sigma)$, the image of an orbit segment $(y, R')$ under the orbit equivalence $h$ is contained in the orbit segment $(h(y), R)$. This is possible by Corollary \ref{boundedtransition}.  Thus, the period of $\gamma^\ast$ is at most $t+R$, so at scale $2 \epsilon$, we have verified the property that we need.
\end{proof}

%

\section{Expansive flows with weak specification}\label{sec:applications}

We now establish the results on thermodynamic formalism and large deviations for $\CAT(-1)$ geodesic flows given in Theorem \ref{thm:applications}. The results are proved for expansive flows with weak specification, and thus apply to geodesic flow on compact $\CAT(-1)$ spaces in light of Theorem \ref{thm:weak-specification}. We prove

\begin{thm}\label{thm:applicationsgeneral}
Let $(X, \FFF)$ be a continuous flow on a compact metric space that is expansive and satisfies the weak specification property. Let  $\varphi: X \to \R$ be a continuous function satisfying the Bowen property. Then
\begin{enumerate}
\item the potential function $\varphi$ has a unique equilibrium measure $\mu_\varphi$,
\item the equilibrium measure $\mu_\varphi$ satisfies the Gibbs property,
\item if $(X, \FFF)$ satisfies the weak periodic orbit closing property, then the $\varphi$-weighted periodic orbits for the flow equidistribute to $\mu_\varphi$,
\item the ergodic measures are entropy dense in the space of $\FFF$-invariant probability measures,
\item the measure $\mu_\varphi$ satisfies the Large Deviations Principle.
\end{enumerate}
\end{thm}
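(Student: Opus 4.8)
The plan is to develop the thermodynamic formalism in the classical Bowen--Franco style, with expansivity controlling $(t,\epsilon)$-separated sets and Bowen balls at a fixed small scale, and weak specification used to glue orbit segments and thereby produce and compare invariant measures. For parts (1) and (2): let $P=P(\varphi)$ be the topological pressure, and for each $t>0$ let $E_t$ be a maximal $(t,\epsilon)$-separated set nearly maximizing $\Lambda_t=\sum_{x\in E_t}\exp\int_0^t\varphi(f_sx)\,ds$. Form the weighted empirical measures $\nu_t=\Lambda_t^{-1}\sum_{x\in E_t}\big(\exp\int_0^t\varphi(f_sx)\,ds\big)\big(\tfrac1t\int_0^t\delta_{f_sx}\,ds\big)$ and let $\mu$ be a weak$^*$ subsequential limit. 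The usual argument, with the Bowen property replacing continuity of Birkhoff integrals along Bowen balls, shows $\mu$ is $\FFF$-invariant with $h_\mu+\int\varphi\,d\mu=P$, so $\mu$ is an equilibrium state; existence also follows abstractly, since expansivity makes $\nu\mapsto h_\nu$ upper semicontinuous on the weak$^*$-compact space of invariant measures. Weak specification, used to glue a near-optimal separated set lying inside a Bowen ball $B_t(x,\epsilon)$ with pressure-realizing segments on either side, gives the lower Gibbs bound; expansivity (only boundedly many $(t,\epsilon)$-separated points lie in a Bowen ball at the expansive scale) together with the Bowen property gives the matching upper bound, so $C^{-1}\le \mu(B_t(x,\epsilon))\,e^{Pt-\int_0^t\varphi(f_sx)\,ds}\le C$ for all $x,t$. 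The Gibbs property then forces uniqueness: a gluing argument shows any positive-measure invariant set is conull, so $\mu$ is ergodic, and any other equilibrium state satisfies the same lower Gibbs estimate, hence is absolutely continuous with bounded density with respect to $\mu$, so ergodicity gives equality. For part (3), assuming the weak periodic orbit closing property, replace each segment of $E_t$ by a periodic orbit of period at most $t+R$; once $\epsilon$ is below the expansive constant distinct separated points yield distinct periodic orbits, and a counting estimate (specification for a lower bound on $\sum_{\gamma\in\Per(t)}e^{\int_\gamma\varphi}$, expansivity and the Bowen property for the upper bound) shows the weighted periodic orbit measures have the same weak$^*$ limit set as the $\nu_t$, namely $\{\mu_\varphi\}$.

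Part (4), entropy density, is the step I expect to be hardest, and, as the authors indicate, it demands a self-contained treatment in continuous time. One reduces, via the ergodic decomposition and affineness of $\nu\mapsto h_\nu$, to approximating (in weak$^*$ topology and in entropy) a finite convex combination $\mu=\sum_{i=1}^m\lambda_i\mu_i$ of ergodic measures. For each $i$ one picks, using the growth-of-separated-sets description of $h_{\mu_i}$, long $(n_i,\epsilon)$-separated families of orbit segments that are simultaneously $\mu_i$-generic (empirical measure within $\epsilon$ of $\mu_i$). Then weak specification is used to concatenate such segments along a quasi-periodic schedule in which type $i$ occupies a time-fraction close to $\lambda_i$, over blocks growing to infinity, producing a point whose forward-orbit empirical measures converge to $\mu$; an ergodic measure $\nu$ in the resulting orbit closure then lies in the prescribed neighborhood of $\mu$, and since the transition times are bounded, the concatenated families still grow at exponential rate close to $\sum\lambda_i h_{\mu_i}=h_\mu$, giving $h_\nu\ge h_\mu-\alpha$. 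The genuinely new technical content, absent from the discrete-time literature, is the bookkeeping of bounded transition times against the non-integer lengths $n_i$ and the careful passage from empirical measures along the constructed orbit to an honest flow-invariant ergodic measure with controlled entropy.

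Finally, for part (5): with (1)--(4) in hand, I would prove the level-2 large deviations principle for the empirical measures $\mathcal E_t(x)=\tfrac1t\int_0^t\delta_{f_sx}\,ds$ under $\mu_\varphi$, with rate function $q(\nu)=P(\varphi)-h_\nu-\int\varphi\,d\nu$ for $\FFF$-invariant $\nu$ and $+\infty$ otherwise, following the standard route of Young and Eizenberg--Kifer--Weiss (and Pfister--Sullivan for the role of entropy density). For a closed set $\mathcal K$ of measures, the Gibbs property rewrites $\mu_\varphi(\mathcal E_t\in\mathcal K)$ as a weighted count of Bowen balls whose empirical measures lie near $\mathcal K$, and expansivity together with upper semicontinuity of $\nu\mapsto h_\nu+\int\varphi\,d\nu$ bounds this by $\exp(-t\inf_{\mathcal K}q+o(t))$; for an open set $\mathcal G$, entropy density supplies an ergodic $\nu\in\mathcal G$ with $h_\nu+\int\varphi\,d\nu$ close to $\sup_{\mathcal G}(h+\int\varphi\,d\cdot)$, weak specification supplies roughly $e^{th_\nu}$ orbit segments with empirical measure in $\mathcal G$, and the lower Gibbs bound shows they carry $\mu_\varphi$-mass at least $\exp(-tq(\nu)+o(t))$. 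The contraction principle then yields the level-1 LDP for Birkhoff averages of continuous observables, and since $q$ vanishes only at $\mu_\varphi$ the averages converge $\mu_\varphi$-almost surely. Apart from the entropy density step, every piece is a careful adaptation of known uniform-specification arguments to the weak-specification, continuous-time setting: the main point of vigilance is that "transition time exactly $\tau$" must be replaced by "transition time at most $\tau$" throughout, and the resulting small, uniformly bounded reparametrizations must be absorbed using the Bowen property and Lemma \ref{lem:shadow in X}-type estimates.
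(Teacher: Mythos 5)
Your overall architecture coincides with the paper's for parts (3)--(5). For (3) you argue exactly as the paper does: the weak closing property plus the Bowen property give $P_G(\varphi)\geq P(\varphi)$, separation of $\Per(t)$ at the expansive scale gives the reverse inequality, and uniqueness of $\mu_\varphi$ upgrades the growth estimate to equidistribution (the paper's Lemma \ref{lem:closing0}); just note the lower bound on the periodic-orbit sum comes from closing, not from specification. For (4) your strategy -- glue $\mu_i$-generic $(t_i,\epsilon)$-separated segments in time-fractions $a_i$ -- is the paper's, but two points you gloss over are made explicit there: gluing \emph{infinitely} many segments requires the infinitary specification statement (Lemma \ref{infinitary-specification}), and the high-entropy ergodic measure is not ``an ergodic measure in the resulting orbit closure'' (such a measure need not have large entropy); one needs the closed invariant set $Y$ of points all of whose forward empirical measures stay near $\mu$, the separated-set count to bound $h(Y)$ below, and the variational principle on $Y$, while the matching bound $h_\nu\leq h_\mu+\eta$ uses upper semicontinuity of entropy from expansivity. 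For (5) you work at level 2 and contract; the paper works at level 1 (upper bound via the time-$1$ map, the Gibbs property and Pfister--Sullivan; lower bound via entropy density, Katok's entropy formula and the lower Gibbs bound) -- equivalent content, though the $\nu$-generic separated sets in the lower bound come from Katok/Birkhoff, not from specification.

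The genuine gap is in your uniqueness argument for (1)--(2). The paper does not reprove these items: it notes that Franco's flow version of Bowen's argument applies in spirit but that non-trivial technical issues arise under weak specification (no periodic orbits available in the construction, extra counting technicalities), and formally deduces the statement from \cite{ClimenhagaThompsonflows}. Your sketch, by contrast, asserts that ``any other equilibrium state satisfies the same lower Gibbs estimate'' and deduces absolute continuity. The construction only yields the Gibbs property for the particular weak$^*$ limit measure you built; there is no a priori reason an arbitrary equilibrium state is Gibbs, and establishing that is essentially equivalent to uniqueness itself, so as written this step is circular. (Moreover, a lower Gibbs bound for $\nu$ combined with the upper bound for $\mu$ gives $\nu(B_t(x,\epsilon))\geq C^{-2}\mu(B_t(x,\epsilon))$, which after a covering argument yields $\mu\ll\nu$, the opposite direction from the one you claim.) The standard route (Bowen, Franco, and its weak-specification adaptation) uses only the Gibbs property of the constructed $\mu$ together with expansivity to show that any invariant measure singular to $\mu$ has $h_\nu+\int\varphi\,d\nu<P(\varphi)$, so every ergodic equilibrium state equals $\mu$; you must either supply that counting argument -- taking care that transition times are only bounded above, not exact -- or, as the paper does, invoke \cite{eF77} and \cite{ClimenhagaThompsonflows}.
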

We address each one of these properties in turn in the following subsections.


\subsection{Unique equilibrium states and the Gibbs property}

We refer to Walters \cite{Walters} as a standard reference for equilibrium states in discrete-time, and the article by Bowen and Ruelle \cite{Bowen-Ruelle} for flows.  Given a potential function $\varphi$, we study the question of whether there is a unique invariant measure which maximizes the quantity $h_\mu + \int\varphi\,d\mu$, where $h_\mu$ is the measure-theoretic entropy. More precisely, given a flow $\mathcal F$ on a compact metric space $X$, and a continuous function $\varphi: X \to \mathbb R$ (called the \emph{potential}), we define the \emph{topological pressure} to be
\[
P(\varphi) = \sup \left\{h_\mu + \int\varphi\,d\mu \, \, \Big\mid \, \, \mu \text{ is an } \mathcal F \text{-invariant probability measure} \right\},
\]
and an \emph{equilibrium state} for $\varphi$ to be a measure achieving this supremum. An equilibrium state for the constant function $\varphi = 0$ is called a \emph{measure of maximal entropy}. Equivalently, $P(\varphi)$  is  the exponential growth rate of the number of distinct orbits for the system, weighted by $\varphi$ in the following sense. For an expansive flow, the precise definition is
\[
P(\varphi) = \lim_{t \to \infty} \frac{1}{t} \log \sup \left \{ \sum_{x \in E} e^{\int_0^t\varphi(g_sx)} \, \, \Big\mid \, \, E \text{ is a $(t, \epsilon)$-separated set} \right \},
\]
where $\epsilon$ is an expansivity constant for the flow, and a set $E$ is $(t, \epsilon)$-separated if for every distinct $x, y\in E$ we have $y \notin \overline B_t(x, \epsilon)$.

For a continuous function $\varphi: X \to \R$, an invariant measure $\mu$ has the \emph{Gibbs property for $\varphi$} if for all $\rho>0$, there is  a constant $Q =Q(\rho)>1$  such that for every $x\in X$ and $t \in \mathbb R$, we have
\begin{equation}\label{eqn:gibbs}
Q^{-1} e^{-tP(\varphi) + \Phi(x,t)} \leq \mu(B_t(x,\rho)) \leq Q e^{-tP(\varphi) + \Phi(x,t)},
\end{equation}
where $\Phi(x,t) = \int_0^t \varphi(f_sx)\,ds$ and $B_t(x, \rho) = \{y: d(f_sx, f_sy)<\rho \text{ for all }s \in [0,t]\}$. In particular, a measure  has the Gibbs property for the function $\varphi =0$ if for all $\rho>0$, there is  a constant $Q =Q(\rho)>1$  such that for every $x\in X$ and $t \in \mathbb R$, we have
\begin{equation}\label{eqn:gibbsmme}
Q^{-1} e^{-th} \leq \mu(B_t(x,\rho)) \leq Q e^{-th},
\end{equation}

For an expansive flow, there exists an equilibrium state for every continuous potential. However, uniqueness can be a subtle question. In our setting, we have the following statement.

\begin{thm}\label{BFCT}
Let $(X, \mathcal F)$ be a continuous flow on a compact metric space. Suppose that $\mathcal F$ is expansive and has the weak specification property. Then, for every potential $\varphi$ with the Bowen property, there exists a unique equilibrium state $\mu_\varphi$. Every such measure $\mu_\varphi$ satisfies the Gibbs property for $\varphi$.
\end{thm}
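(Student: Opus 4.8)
The plan is to prove Theorem~\ref{BFCT} by verifying the hypotheses of the abstract uniqueness machinery for equilibrium states due to Climenhaga--Thompson (and its flow version in \cite{ClimenhagaThompsonflows}, \cite{BCFT}), applied in the simplest possible case where the entire space of orbit segments carries the (weak) specification and Bowen properties. First I would recall the statement of that criterion: if $(X,\mathcal F)$ is a flow with a decomposition of the space of orbit segments $X\times[0,\infty)$ into collections $\mathcal P, \mathcal G, \mathcal S$ such that $\mathcal G$ has (weak) specification, $\varphi$ has the Bowen property on $\mathcal G$, the obstruction collections $\mathcal P, \mathcal S$ carry subexponentially small pressure (the ``pressure gap'' $P(\mathcal P\cup\mathcal S,\varphi)<P(\varphi)$), and a mild regularity condition holds, then there is a unique equilibrium state, and it satisfies the Gibbs property. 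Here the work is trivial: take $\mathcal G = X\times[0,\infty)$ (the whole collection), and $\mathcal P=\mathcal S=\emptyset$. Then weak specification of $\mathcal F$ gives weak specification of $\mathcal G$ with uniformly bounded transition times, the Bowen property for $\varphi$ holds on all of $X\times[0,\infty)$ by hypothesis, and the pressure gap is vacuous since $P(\emptyset,\varphi)=-\infty < P(\varphi)$.

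The one subtlety is that the published uniqueness criterion is usually stated for the strong (Bowen) specification property, whereas we only have weak specification (bounded but non-constant transition times, non-periodic shadowing orbit). So the bulk of the proof is to check that the argument goes through verbatim with weak specification in place of strong specification. I would do this by following the standard two-part strategy: (i) lower Gibbs bound for \emph{any} equilibrium state, obtained by using weak specification to glue together many near-optimal separated orbit segments into a single orbit, producing a lower bound $\mu(B_t(x,\rho))\geq Q^{-1}e^{-tP(\varphi)+\Phi(x,t)}$ with $Q$ depending only on $\rho$, the Bowen constant $V$, the expansivity constant, and the maximum transition time $\tau=\tau(\delta)$; the bounded-transition-time feature of weak specification is exactly what makes the counting estimates work, since the ``wasted'' time in transitions is at most $\tau$ per concatenation; (ii) the matching upper Gibbs bound, which follows from expansivity plus a standard covering argument and does \emph{not} require specification at all. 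Together these give the Gibbs property for every equilibrium state. Uniqueness then follows from the classical argument that two measures both satisfying the Gibbs property for the same $\varphi$ must be equivalent with bounded Radon--Nikodym derivative, hence equal by ergodic decomposition and ergodicity of equilibrium states (ergodicity itself being a consequence of the Gibbs property together with weak specification, which gives a weak mixing / topological mixing-like statement sufficient for the Hopf-type argument, or alternatively of the uniqueness itself via extreme-point reasoning).

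The main obstacle I anticipate is the bookkeeping in step (i): turning the weak specification property, which is stated for \emph{finite} collections of orbit segments with a priori no control beyond $\tau_i\le\tau$, into a clean lower bound on the measure of Bowen balls. One has to: choose a scale $\delta<\rho$ and the associated $\tau$; take a maximal $(n,\delta)$-separated set $E_n$ at length $n$ so that $\sum_{y\in E_n}e^{\Phi(y,n)}\geq e^{n(P(\varphi)-\eta)}$ for large $n$; for a fixed $(t,\rho)$-Bowen ball around $x$, glue the segment $(x,t)$ followed by each $y\in E_n$ (and iterate), getting orbits that shadow $x$ for time $t$ and then realize distinct itineraries through $E_n$; use the Bowen property to control the distortion of $\Phi$ along the shadowing orbit and expansivity to ensure the resulting orbits are genuinely separated, hence occupy disjoint sub-balls; then let the number of concatenated blocks go to infinity and take an appropriate limit, invoking that $\mu$ is an equilibrium state (so its Birkhoff averages of $\varphi$ and entropy are as large as possible) to pin down the constant. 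This is the computational heart of the argument and the only place where care is genuinely required; everything else is either quoted from the abstract theorem or is a routine adaptation. I would structure the proof as: (1) reduce to the abstract criterion with the trivial decomposition; (2) if one prefers a self-contained route, prove the lower Gibbs bound directly as above; (3) prove the upper Gibbs bound via expansivity; (4) deduce uniqueness. I expect to cite \cite{ClimenhagaThompsonflows} for (1) and note that weak specification suffices by inspecting the proof, remarking that the constant-versus-bounded transition time distinction is immaterial to every estimate used.
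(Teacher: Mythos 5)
Your proposal is correct and takes essentially the same route as the paper: the paper gives no self-contained proof of Theorem \ref{BFCT}, but observes that Franco's adaptation \cite{eF77} of Bowen's argument goes through with weak specification (modulo the counting technicalities you flag), and that formally the statement is a corollary of \cite{ClimenhagaThompsonflows} --- which is exactly your reduction to the abstract criterion with the trivial decomposition $\mathcal G = X\times[0,\infty)$, $\mathcal P=\mathcal S=\emptyset$ (and note that the specification hypothesis there already allows bounded, non-constant transition times, so no strengthening is needed).
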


For flows with the strong version of specification, this result was proved by Franco \cite{eF77}, generalizing Bowen's  discrete-time argument \cite{bowen}. The same essential argument applies assuming only weak specification. However, non-trivial technical issues must be overcome since weak specification does not allow us to use periodic orbits in the construction of the unique equilibrium state, and there are additional technicalities in various counting arguments. Formally, the statement for weak specification is a corollary of recent work by Climenhaga and the third named author \cite{ClimenhagaThompsonflows}, although that work is designed to apply much more generally in settings which do not have any global form of the specification property.


\subsection{Equidistribution of weighted periodic orbits} \label{sec:equidistribute}

For $a<b$, let $\Per (a, b]$ denote the set of closed orbits for $\{f_s\}$ with period in the interval $(a, b]$, and let $\varphi$ be a continuous function. We define the \emph{upper pressure of periodic orbits} to be 
\begin{equation} \label{d.gurevicpressure}
\overline P^{\ast} (\varphi) = \limsup_{t \to \infty} \frac{1}{t} \log \sum_{\gamma \in \Per(t-R, t]} e^{\Phi(\gamma)},
\end{equation}
where $R>0$ is fixed and $\Phi(\gamma)$ is the value given by integrating $\varphi$ around the periodic orbit. For an expansive flow, $\overline P^\ast (\varphi)$ is well defined, and satisfies $\overline P^\ast (\varphi) \leq P(\varphi)$.  This was proved in the $\varphi =0$ case in \cite{BW}. To extend to $\varphi \neq 0$,  the proof of \cite[Theorem 5]{BW} shows that choosing one point $x_\gamma$ on each of the orbits $\gamma$ in $\Per(t-R, t]$ yields a $(t, \alpha)$-separated set for some small $\alpha>0$. Since $| \Phi(\gamma)-\int_0^t \varphi(g_sx_\gamma)| \leq R \sup|\varphi|$, it follows that $\overline P^\ast(\varphi)\leq P(\varphi)$. It is a straightforward exercise to verify that the value of $\overline P^\ast (\varphi)$ is independent of the choice of $R$. 

We define the \emph{lower pressure of periodic orbits (with window size $R$)} to be
\begin{equation} \label{d.gurevicpressurel}
\underline P^{\ast}_R (\varphi) = \liminf_{t \to \infty} \frac{1}{t} \log \sum_{\gamma \in \Per(t-R, t]} e^{\Phi(\gamma)}.
\end{equation}
If there exists $R$ such that $\underline P^{\ast}_R (\varphi) = \overline P^{\ast} (\varphi)$, then $\underline P^{\ast}_{R'} (\varphi) = \overline P^{\ast} (\varphi)$ for any $R' \geq R$, and  we call this common value the \emph{pressure of periodic orbits}, denoted $P^\ast(\varphi)$. 

For a periodic orbit  $\gamma$, let $\mu_\gamma$ be the natural measure around the orbit. That is, if $\gamma$ has period $t$, and $x \in \gamma$, then
\[
\int \psi d\mu_\gamma := \frac{1}{t} \int_0^t \psi (f_sx)ds
\]
for all $\psi \in C(X)$. We say the \emph{periodic orbits weighted by $\varphi$ equidistribute} to a measure $\mu$ if for any fixed $R>0$ which is sufficiently large, we have
\begin{equation}\label{eq.equidist}
\frac{1}{C(t, R)}  \sum_{\gamma \in \Per(t-R, t]}e^{\Phi(\gamma)} \mu_\gamma \to \mu,
\end{equation} 
where $C(t, R)$ is the normalizing constant $(\sum_{\gamma \in \Per(t-R,t ]} e^{\Phi(\gamma)} \mu_\gamma)(X)$.  Equidistribution of weighted periodic orbits for equilibrium states  was first investigated in a uniformly hyperbolic setting by Parry \cite{wP88}, and for geodesic flow on manifolds of non-positive curvature by Pollicott \cite{pollicott_nonpos}. 

The proof of the Variational Principle \cite[Theorem 9.10]{Walters} shows that if $\underline P^\ast_R(\varphi) =P(\varphi)$, then any weak$^\ast$ limit of $\frac{1}{C(t, R)}  \sum_{\gamma \in \Per(t-R, t]}e^{\Phi(\gamma)} \mu_\gamma$ is an equilibrium state for $\varphi$. See Remark 3 of \cite{GS14} and \S2.3 of \cite{BCFT}.
Thus if we know that $P^\ast(\varphi) =P(\varphi)$, and that $\varphi$ has a unique equilibrium state $\mu$, it follows immediately that the periodic orbits weighted by $\varphi$ equidistribute to $\mu$. 

\begin{lem} \label{lem:closing0}
Suppose an expansive flow $(X, \FFF)$ has the weak periodic orbit closing property of Definition \ref{orbitclose}. Then there exists $R>0$ so that for any continuous potential with the Bowen property, $\underline P^\ast_R(\varphi) =P(\varphi)$, and thus  $P^\ast(\varphi) =P(\varphi)$.
\end{lem}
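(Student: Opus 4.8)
The statement to prove is that, for an expansive flow with the weak periodic orbit closing property and a potential $\varphi$ with the Bowen property, the Gurevic pressure equals the topological pressure, i.e. $P_G(\varphi) = P(\varphi)$. The plan is to prove the two inequalities separately. The easier inequality is $P_G(\varphi) \leq P(\varphi)$: here one observes that for any expansivity constant $\epsilon$, and any $\delta>0$, the set of periodic orbits of least period in $(t, t+\delta]$ gives rise (by choosing one point on each orbit) to a $(t,\epsilon)$-separated set, at least after passing to a subcollection whose orbits are genuinely $\epsilon$-separated over the time interval $[0,t]$; the loss incurred in passing to this subcollection is controlled uniformly because an expansive flow has only finitely many closed orbits that can be mutually $\epsilon$-close over all of $\mathbb{R}$ up to time reparametrization. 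Combined with the Bowen property to compare $\Phi(\gamma)$ with $\int_0^t \varphi(f_s x)\,ds$ up to a uniform additive constant $V$, this shows the weighted count over $\Per(t)$ is bounded above by a constant times the weighted separated-set sum defining $P(\varphi)$, giving $P_G(\varphi)\leq P(\varphi)$.

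The substantive direction is $P_G(\varphi) \geq P(\varphi)$, and this is where the weak periodic orbit closing property does the work. Fix $\epsilon>0$ (smaller than an expansivity constant) and let $R = R(\epsilon)$ be the constant from Definition \ref{orbitclose}. For each large $t$, take a maximal $(t,\epsilon)$-separated set $E$ realizing (up to subexponential error) the supremum in the definition of $P(\varphi)$. Apply the closing property to each orbit segment $(x,t)$ with $x\in E$ to obtain a periodic orbit $\gamma^\ast_x \in \Per(t+R)$ with $d_t(x,\gamma^\ast_x) < \epsilon/2$ (after shrinking $\epsilon$ at the outset so the closing property is applied at scale $\epsilon/2$). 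The Bowen property then gives $|\Phi(\gamma^\ast_x) - \int_0^t \varphi(f_s x)\,ds| \leq V'$ for a uniform constant $V'$ — one must be slightly careful because $\Phi(\gamma^\ast_x)$ integrates over the full period $t+R$ rather than over $[0,t]$, but the extra length is at most $R$ so contributes at most $R\|\varphi\|$, again uniform. Finally one must bound the multiplicity of the assignment $x \mapsto \gamma^\ast_x$: if $\gamma^\ast_{x_1} = \gamma^\ast_{x_2}$ (or more generally the two closed orbits coincide as orbits), then $x_1$ and $x_2$ both lie within $\epsilon/2$ of this common orbit in the $d_t$ metric, hence $d_t(x_1, x_2) < \epsilon$ unless they are separated by flowing along the shared orbit; expansiveness plus the bounded period $t+R$ bounds the number of such $x_i$ by a constant independent of $t$ (roughly $(t+R)/s_0$ where $s_0$ is a scale below which the flow has no almost-periods, times a bounded combinatorial factor). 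Dividing the weighted separated-set sum by this uniform multiplicity and by $e^{V' + R\|\varphi\|}$, we get that $\sum_{\gamma\in\Per(t+R)} e^{\Phi(\gamma)}$ is at least a uniform constant times the weighted $(t,\epsilon)$-separated sum, so taking $\frac1t\log$ and $\limsup$ yields $P_G(\varphi)\geq P(\varphi)$.

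The main obstacle I expect is the multiplicity estimate in the lower bound: controlling how many points of a $(t,\epsilon)$-separated set can be closed up to the \emph{same} periodic orbit. The key point is that expansiveness of the flow, applied to the flow restricted to (a neighborhood of) the finitely-many-parameter family of reparametrizations of a fixed closed orbit of bounded period, forces any two such points to differ essentially by a time shift, and since the orbit has period at most $t+R$ and there is a positive lower bound on the period of closed orbits (here one can use $\epsilon_0$ or simply compactness), only boundedly many members of an $\epsilon$-separated set can sit near it. Once this is in hand the rest is bookkeeping with the Bowen property and the definitions of $P_G$ and $P$. I would also remark that the $\limsup$ rather than $\lim$ in the definition of $P_G$ means no subadditivity argument is needed; the pigeonhole remark already recorded in the text (that summing over periods in $(t,t+\delta]$ gives the same exponential rate) lets us treat $\Per(t+R)$ and $\Per(t)$ interchangeably for the purpose of computing the exponential growth rate.
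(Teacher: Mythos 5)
Your argument is essentially the paper's: the easy inequality comes from expansivity making periodic orbits $(t,\epsilon)$-separated, and the hard one from closing a nearly maximizing sequence of separated sets, comparing weights via the Bowen property plus an $R\|\varphi\|$ correction, and observing that the bounded extra period and the multiplicity of the assignment $x\mapsto\gamma^\ast_x$ only contribute subexponential factors. The one cosmetic slip is your multiplicity claim: it is not a constant independent of $t$ but (after taking the separation scale more than twice the closing scale, so that points of the same closed orbit shadowed by distinct $x$'s are genuinely $d_t$-separated and hence correspond to time shifts bounded below) at most linear in $t+R$, which is just as harmless; the paper itself is even terser here, settling for injectivity of the closing map at the level of periodic points.
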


\begin{proof}

We already verified that $\overline P^\ast(\varphi) \leq P(\varphi)$. For the other inequality, let $2\epsilon$ be an expansivity constant and take a sequence of $(t, 2\epsilon)$-separated sets $E_t$ so that
\[
\frac{1}{t} \log \sum_{x \in E_t}e^{\int_0^t\varphi(g_sx)} \to P(\varphi).
\]

Then by the weak periodic orbit closing property, for each $x \in E_t$, there exists a periodic orbit $\gamma(x)$ with $d_t(x, \gamma(x))<\epsilon$ and $\{\gamma(x) \mid x \in E_t\} \subset \Per(t, t+R]$. For any fixed $\gamma \in \Per(t, t+R]$, since $E_t$ is $(t, 2\epsilon)$-separated,  there are at most $(T+R)/2\epsilon$ elements in the set $\{ x \in E_t : \gamma(x)=\gamma \}$. We also have
\[
\left | \Phi (\gamma(x)) - \int_0^t\varphi(g_sx) \right | \leq \left | \int_0^t\varphi(g_s \gamma(x)) - \int_0^t\varphi(g_sx) \right | + R \| \varphi \| \leq V+ R \| \varphi \|,
\]
where $V$ is the constant appearing in the Bowen property for $\varphi$. Thus,
\[
\sum_{\gamma \in \Per(t, t+R]} e^{\Phi(\gamma)} \geq \sum_{\{\gamma(x) \mid x \in E_t\}}e^{\Phi(\gamma)}  \geq \frac{2\epsilon}{T+R}e^{-V-R \| \varphi \|}\sum_{x \in E_t}e^{\int_0^t\varphi(g_sx)},
\]
and so
\[
\frac{1}{t+R}\log \sum_{\gamma \in \Per(t, t+R]} e^{\Phi(\gamma)} \geq \frac{t}{t+R} \left( \frac{1}{t} \log \sum_{x \in E_t}e^{\int_0^t\varphi(g_sx)} \right)  - \frac{K}{t+R},
\]
where $K = V+R \| \varphi \|- \log(2\epsilon(T+R)^{-1})$. Taking a limit as $t \to \infty$, we obtain $\underline P^\ast_R(\varphi) \geq P(\varphi)$. We already verified that $\overline P^\ast(\varphi) \leq P(\varphi)$, so this completes the proof. 
\end{proof}

Thus, for an expansive flow with weak specification and weak periodic orbit closing, and any continuous $\varphi: X \to \mathbb R$ with the Bowen property, since 
$\varphi$ has a unique equilibrium state $\mu_\varphi$, it follows that the periodic orbits weighted by $\varphi$ are equidistributed in the sense that for any fixed sufficiently large $R>0$,
\[
\frac{1}{C(t, R)}  \sum_{\gamma \in \Per(t-R, t]}e^{\Phi(\gamma)} \mu_\gamma \to \mu_\varphi.
\]

We remark that a stronger equidistribution statement can be asked for by allowing $R>0$ to be ANY fixed window size in the above. This stronger version is what is obtained in the setting of e.g. \cite{wP88, BCFT}. We emphasize that this stronger statement cannot be obtained from our hypotheses because knowledge of $\underline P^\ast_R(\varphi)$ a priori gives no information on $\underline P^\ast_{\delta} (\varphi)$ for $\delta<R$, and the weak specification and periodic orbit closing hypotheses are not strong enough to ensure that there are periodic orbits of length $[T, T+\delta)$ when $\delta$ is small.
\subsection{Entropy density of ergodic measures} \label{sec:hdense}
For a discrete-time dynamical system $(X, f)$ or flow $(X, \FFF)$, the \emph{entropy density of ergodic measures} is the property that for any invariant measure $\mu$, for any $\eta >0$, we can find an ergodic measure $\nu$ such that $D(\mu, \nu)< \eta$ and $| h_\nu - h_\mu| < \eta$, where $D$ is any choice of metric on the space of measures on $X$ compatible with the weak$^\ast$ topology (see  \S6.1 of \cite{Walters}). 

Entropy density is known to be true for maps with the almost product property  \cite{PfS}, which is a weaker hypothesis than the specification property. The basic argument was first proved for $\Z^d$-shifts with specification by Eizenberg, Kifer and Weiss \cite{EKW}. No reference is available for maps with weak specification, or for flows.  In this section, we carefully prove entropy density for flows with weak specification. While this extension is expected, care must be taken in the argument, and dealing with the variable gap length is a non-trivial extension of the existing proofs.

We remark that the time-$1$ map $f_1$ of a flow with weak specification may not satisfy the entropy density condition.  Consider a suspension flow with constant roof function $1$. An ergodic measure for $f_1$ is supported on a single height, i.e  on $X \times \{h\}$ for some $h \in[0,1)$. Take an $f_1$-invariant measure given by a convex combination of an ergodic measure on $X \times \{0\}$, and an ergodic measure on $X \times \{ \frac1 2\}$. This measure can clearly not be approximated weak$^\ast$ by an ergodic $f_1$-invariant measure.

We remark that entropy density of ergodic measures is not true for geodesic flow on many $\CAT(0)$ spaces.  The ergodic measures are not even dense. For example, we can take the setting of Theorem \ref{thm:obstruction} and consider a $\CAT(0)$ space with a fat 1-flat. A measure whose support is two distinct parallel geodesics in the flat is not a weak$^\ast$ limit of ergodic measures. This phenomenon was proved rigorously in \cite{CS} for rank one surfaces with an embedded flat cylinder.

Before we proceed, we first require a general lemma that says that weak specification actually allows us to approximate infinitely many orbit segments.

\begin{lem}\label{infinitary-specification} 
Let $(X, \mathcal F)$ be a continuous flow on a compact metric space and assume that $\mathcal F$ satisfies the 
weak specification property. Then the conclusion of the specification property holds for any countably 
infinite sequence of orbit segments.
\end{lem}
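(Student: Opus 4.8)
The plan is to take a countably infinite sequence of orbit segments $\{(x_i, t_i)\}_{i=1}^\infty$ and produce the shadowing point $y$ as a limit of points $y_k$ produced by applying weak specification to the finite truncations $\{(x_i, t_i)\}_{i=1}^k$. First I would fix a scale $\delta>0$; the goal is to build a point $y$ and transition times $\tau_i \in [0,\tau]$ (where $\tau$ is the maximum transition time at scale $\delta$) so that, with $s_j = \sum_{i=1}^{j} t_i + \sum_{i=1}^{j-1}\tau_i$, the orbit of $y$ $\delta$-shadows $(x_j, t_j)$ starting at time $s_{j-1} + \tau_{j-1}$ for every $j \geq 1$. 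The subtlety, and the reason one cannot naively reuse a single application of weak specification, is that the transition times produced for the truncation of length $k$ need not agree with those for length $k+1$, so both the point and the gaps must be extracted via a compactness/diagonal argument.

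The key steps, in order, are as follows. First, for each $k$ apply weak specification at scale $\delta$ to $\{(x_i,t_i)\}_{i=1}^k$ to obtain a point $y_k \in X$ and transition times $\tau_1^{(k)}, \dots, \tau_{k-1}^{(k)} \in [0,\tau]$ satisfying \eqref{eqn:spec} for $j = 1, \dots, k$. Second, pass to a subsequence: since $X$ is compact and each $[0,\tau]$ is compact, a diagonal argument gives a subsequence $k_m \to \infty$ along which $y_{k_m} \to y$ in $X$ and, for each fixed $i$, $\tau_i^{(k_m)} \to \tau_i \in [0,\tau]$. Third, verify that $y$ together with the limiting transition times $\{\tau_i\}_{i=1}^\infty$ works: fix $j$; for all $m$ large enough that $k_m \geq j$, the inequality $d_{t_j}(f_{s_{j-1}^{(k_m)} + \tau_{j-1}^{(k_m)}} y_{k_m}, x_j) < \delta$ holds, where $s_{j-1}^{(k_m)}$ is built from $t_1, \dots, t_{j-1}$ and $\tau_1^{(k_m)}, \dots, \tau_{j-2}^{(k_m)}$ — note crucially that $s_{j-1}^{(k_m)}$ depends only on the first $j-2$ transition times, which have stabilized in the limit. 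Letting $m \to \infty$ and using joint continuity of the flow (so $f_{s_{j-1}^{(k_m)} + \tau_{j-1}^{(k_m)}} y_{k_m} \to f_{s_{j-1}+\tau_{j-1}} y$) together with continuity of $d_{t_j}$, we get $d_{t_j}(f_{s_{j-1}+\tau_{j-1}} y, x_j) \leq \delta$. Fourth, handle the non-strict inequality: this is a cosmetic issue which one resolves by running the argument at scale $\delta' < \delta$ for the finite truncations, so that the limit satisfies $d_{t_j} \leq \delta' < \delta$ as required by \eqref{eqn:spec}.

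The main obstacle I expect is purely bookkeeping: keeping careful track of which quantities in \eqref{eqn:spec} have converged and which are still varying along the subsequence. In particular, one must be a little careful that the partial sum $s_{j-1}^{(k_m)}$ that determines where the $j$-th shadowing window begins is a continuous function of finitely many transition times that have already converged, so that passing to the limit in the index $m$ is legitimate for each fixed $j$ simultaneously via the diagonal subsequence. There is no genuine dynamical difficulty here — everything follows from compactness of $X$ and of $[0,\tau]$, continuity of the flow, and the uniform bound $\tau$ on transition times supplied by weak specification — which is precisely why the statement is true for weak specification but fails for relationships (like orbit semi-equivalence) that do not come with such a uniform bound.
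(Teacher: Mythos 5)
Your proposal is correct and follows essentially the same route as the paper: apply weak specification to the finite truncations at a slightly smaller scale, extract the point and the transition times by compactness of $X$ and of $[0,\tau]$ via a diagonal subsequence, and pass to the limit for each fixed $j$. The only difference is presentational — the paper verifies the limit by choosing a specific approximant $y_N$ and splitting the error into three $\delta/3$ terms via uniform continuity, whereas you invoke continuity of $d_{t_j}$ and joint continuity of the flow directly and absorb the non-strict inequality by running the truncations at scale $\delta'<\delta$; both are the same argument.
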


\begin{proof}
Let $\delta >0$ be the scale, and $\tau>0$ the maximum transition time for the scale $\delta/3$ provided 
by the weak specification property for $\mathcal F$. Let $\{(x_i,t_i)\}_{i\in \mathbb N}$ be a countably 
infinite sequence of orbit segments. For each $j\in \mathbb N$, we use the weak specification on the first $j$ orbit 
segments $\{(x_i,t_i)\}_{i=1}^j$ to produce a point $y_j\in X$ and corresponding
transition times $\tau^{(j)}_i$ ($1\leq i\leq j$), so that appropriate iterates of $y_j$ $(\delta/3)$-shadow the prescribed orbit segments. 
Since the space $X$ is compact, one can choose an accumulation point
for the sequence $\{y_j\}_{j\in \mathbb N}$, call it $y$. Passing to a subsequence, we may assume that
$y_j \rightarrow y$.

We now want to verify that $y$ has the desired property. To do this, we need to produce a countable collection
$\tau_i$ of transition times, and check the corresponding specification property. First,
look at the sequence $\{\tau^{(j)}_1\}_{j\in \mathbb N} \subset [0, \tau]$. Passing to a subsequence if necessary,
we may assume $\{\tau^{(j)}_1\}_{j\in \mathbb N}$ converges to $\tau_1 \in [0, \tau]$. Next consider the 
sequence $\{\tau^{(j)}_2\}_{j\geq 2, j\in \mathbb N} \subset [0, \tau]$. Again, passing to a subsequence, we 
can choose a limiting $\tau_2\in [0, \tau]$. Continuing in this manner, we obtain a sequence of transition times
$\{\tau_i\}_{i\in \mathbb N}$. 

Now, given $k \in \mathbb N$,
we consider the finitely many orbit segments $\{(x_i, t_i)\}_{i=1}^k$. Recall that $s_j := \sum_{i=1}^{j} n_i + \sum_{i=1}^{j-1}\tau_i$ is the time taken to shadow the first $j$ orbit segments. By compactness, there is an
$\epsilon >0$ with the property that, for any pair of points satisfying $d(z, z') \leq \epsilon$, we have
$d_{s_k}(z, z')<\delta/3$. By continuity of the flow, there is also an $\epsilon' >0$ so that
for all $x\in X$, $|t-t'|< \epsilon '$, and $1\leq i \leq k$, we have $d_{t_i}(f_t(x), f_{t'}(x))<\delta/3$.
We now choose a $y':= y_N$ from the approximating sequence having the following two properties:
(i) $d(y', y) < \epsilon$, and (ii) each $|\tau^{(N)}_i - \tau_i|< \epsilon'/k$, for $1\leq i \leq k$. 

From property (i), we conclude that $d_{s_k}(y, y')<\delta/3$, and from property (ii), it follows immediately 
that $|(s^{(N)}_i+ \tau^{(N)}_i) - (s_i+\tau_i)| < \epsilon'$ holds for all $1\leq i \leq k$. 
We now have the estimate:
\begin{align*}
d_{t_i}(f_{s_{i-1} + \tau_{i-1}}y, x_i) &\leq d_{t_i}(f_{s_{i-1} + \tau_{i-1}}y, f_{s_{i-1} + \tau_{i-1}}y') + 
d_{t_i}(f_{s_{i-1} + \tau_{i-1}}y', x_i) \\
& \leq d_{s_k}(y, y') + d_{t_i}(f_{s_{i-1} + \tau_{i-1}}y', x_i) \\
& \leq d_{s_k}(y, y') + d_{t_i}(f_{s_{i-1} + \tau_{i-1}}y', f_{s^{(N)}_{i-1} + \tau^{(N)}_{i-1}}y') + d_{t_i}(f_{s^{(N)}_{i-1} + \tau^{(N)}_{i-1}}y', x_i) \\
& \leq \delta/3 + \delta/3 + \delta /3 = \delta.
\end{align*}
The first and third inequalities are just applications of the triangle inequality for the metric $d_{t_i}$. 
The second inequality comes from the definition of the metrics $d_t$, along with the fact that 
$s_{i-1} + \tau_{i-1}+ t_i \leq s_k$ for every $1\leq i \leq k$.  For the last inequality, the first term is controlled
by property (i), while the second term is controlled by property (ii) and the choice of $\epsilon'$. The last
term is controlled by the specification property at scale $\delta/3$ for the point $y'=y_N$. This gives the
desired estimate, and since this can be done for every $k\in \mathbb N$, completes the proof.
\end{proof}

Let $\mathcal M_{\FFF}(X)$ denote the space of $\FFF$-invariant probability measures on $X$. The following proposition is the main result of this section. 

\begin{prop} \label{entropydenseflow}
Let $\FFF$ be an expansive flow with the weak specification property. Then the ergodic measures are entropy dense in $M_{\FFF}(X)$. That is,  if $\mu \in \mathcal M_{\FFF}(X)$, then for any $\eta >0$ we can find an $\FFF$-invariant ergodic measure $\nu$ such that $D(\mu, \nu)< \eta$ and $| h_\nu - h_\mu| < \eta$.
\end{prop}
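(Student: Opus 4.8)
The plan is to mimic the standard entropy-density argument of Eizenberg--Kifer--Weiss \cite{EKW} and its refinement via the almost-product property \cite{PfS}, but carried out directly for flows, with careful bookkeeping of the variable gap lengths coming from weak specification. First I would reduce to the case where $\mu$ is a finite convex combination $\mu = \sum_{\ell=1}^m a_\ell \mu_\ell$ of ergodic measures: the ergodic decomposition together with the affinity and upper semi-continuity of entropy (the latter from expansivity) lets us approximate an arbitrary invariant $\mu$ in both $D$ and entropy by such a combination, so it suffices to produce an ergodic $\nu$ close to $\sum a_\ell \mu_\ell$. One further harmless reduction is to take the $a_\ell$ rational. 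For each ergodic $\mu_\ell$, fix a generic point $x_\ell$ and use the pointwise ergodic theorem together with a Katok-type entropy estimate to find, for large $t$, a $(t,\varepsilon)$-separated set $E_\ell(t)$ of $\mu_\ell$-typical orbit segments with $\#E_\ell(t) \geq e^{t(h_{\mu_\ell} - \eta')}$, each of whose empirical measures $\frac1t\int_0^t \delta_{f_s y}\,ds$ is within $\eta'$ of $\mu_\ell$ in $D$.

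Next I would build a ``concatenation construction'': fix a long block length $t$, and form orbit segments of length roughly $N t$ (for large $N$) by choosing, in each of $N$ consecutive slots, a segment from some $E_\ell(t)$, with the proportion of slots assigned to label $\ell$ equal to $a_\ell$; between consecutive chosen segments insert a transition governed by the weak specification property (Lemma \ref{infinitary-specification} ensures we can even glue countably many). Applying weak specification at a small scale $\delta$ with maximum transition time $\tau$, each such recipe yields a shadowing point, and distinct recipes yield $(Nt, 2\delta)$-separated points once $\delta$ is small relative to $\varepsilon$ and one is careful that the unknown transition times $\tau_i \in [0,\tau]$ do not destroy separation — this is handled by noting the total transition time is at most $N\tau$, which is a small fraction of $Nt$ when $t \gg \tau$, and by passing to a subset on which the combinatorial pattern already forces separation on the ``good'' blocks. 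Counting recipes gives a set of cardinality at least $e^{Nt(\sum a_\ell h_{\mu_\ell} - \eta'')} = e^{Nt(h_\mu - \eta'')}$ (using affinity of entropy), and every empirical measure of such a point is within $\eta''$ of $\mu$ by construction (the transition and boundary contributions are an $O(\tau/t + 1/N)$ fraction of the orbit).

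Finally I would let the length go to infinity and extract the measure. Taking $N \to \infty$ along the concatenation construction and passing to a weak$^\ast$ limit of the corresponding empirical measures produces an invariant measure $\nu_0$ with $D(\nu_0, \mu)$ small and, by the usual variational-principle estimate applied to the separated sets just constructed (as in \S\ref{sec:equidistribute}, cf.\ \cite[Theorem 9.10]{Walters}), with $h_{\nu_0} \geq h_\mu - \eta'''$; combined with upper semi-continuity of entropy near $\mu$ this gives $|h_{\nu_0} - h_\mu|$ small. The measure $\nu_0$ need not be ergodic, so the last step is an ergodicity upgrade: one either arranges the concatenation recipe to be ``self-coupling'' so that the shadowing orbit visits every prescribed local pattern with the correct frequency along a single orbit (forcing almost every ergodic component to be close to $\mu$ and to have entropy near $h_\mu$, whence a generic ergodic component $\nu$ works), or invokes the standard fact that in a system with (weak) specification the set of such constructed limit measures contains ergodic ones — concretely, build a single point whose forward orbit realizes the infinite concatenation and whose empirical averages converge, so that the measure it generates is ergodic by the pointwise ergodic theorem. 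I expect this ergodicity upgrade, together with the precise control of separation under variable transition times, to be the main obstacle: unlike the constant-gap case, one cannot simply quotient to a suspension with a tidy base, and the argument must track how the $\tau_i$'s interleave with the block structure throughout.
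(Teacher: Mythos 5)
Your setup — ergodic decomposition into $\sum a_\ell \mu_\ell$, Katok-type $(t,\epsilon)$-separated sets of $\mu_\ell$-typical segments, concatenation via weak specification with the infinitary version (Lemma \ref{infinitary-specification}), and a pigeonhole/counting step to keep separation despite the variable transition times — is exactly the architecture of the paper's proof. The genuine gap is in your final step, which you yourself flag as the main obstacle but do not actually close. Your second suggested mechanism is false as stated: a point whose empirical averages converge generates an invariant measure, but nothing forces that measure to be ergodic (in systems with specification \emph{every} invariant measure has generic points, so convergence of Birkhoff averages along one orbit gives no ergodicity whatsoever; the pointwise ergodic theorem goes the other way). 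Moreover, the entropy lower bound you invoke via the variational-principle estimate attaches to the weak$^\ast$ limit of measures averaged over the whole separated family, which is precisely the non-ergodic object, while the single-orbit construction that you hope is ergodic carries no entropy lower bound at all. So as written, closeness, entropy, and ergodicity never live on the same measure.

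The paper resolves this without any ``ergodicity upgrade'' of a limit measure, and this is the device your sketch is missing: define the compact, forward-invariant set $Y=Y_{Kt}$ of points all of whose length-$Kt$ empirical measures stay in $\overline B(\mu,5\eta)$, prove that \emph{every} invariant measure supported on $Y$ is within $6\eta$ of $\mu$ (Lemma \ref{ergodic}), and show the concatenated shadowing points land in $Y$ — which requires bootstrapping the empirical-measure control from the special block-end times to \emph{all} times and all starting points along the orbit (Lemmas \ref{lem:approx} and \ref{lem: in Y}); your remark that transitions are an $O(\tau/t+1/N)$ fraction covers the block-end times but not this uniform-in-time statement. The separated-set count then gives $h_{\mathrm{top}}(Y)>h_\mu-\eta$, and since topological entropy of an invariant compact set is the supremum of $h_\nu$ over \emph{ergodic} measures supported on it, one simply picks an ergodic $\nu$ on $Y$ with $h_\nu>h_\mu-\eta$; the matching upper bound comes from upper semi-continuity of entropy (expansivity) together with the fact that all measures on $Y$ are close to $\mu$. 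Your ``self-coupling'' alternative, if made precise (almost every ergodic component of the limit is close to $\mu$, then select a component of near-maximal entropy by affinity), is essentially this argument — but the precise formulation needs the invariant set $Y$ or an equivalent uniform control, and that is the substantive content you would still have to supply.
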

The strategy is to construct a closed $\FFF$-invariant set $Y\subset X$ such that every invariant measure supported on $Y$ 
is weak*-close to $\mu$, and such that the topological entropy of $Y$ is close to $h_\mu$. For $x \in X$ and $t \in \mathbb R$, we define a measure $\mathcal{E}_t(x)$ by
\[
\int \psi\,d\EEE_t(x) = \frac 1t \int_0^t \psi(f_s x)\,ds,
\]
for all $\psi\in C(X)$. The measures $\EEE_t(x)$ are sometimes called the \emph{empirical measures} for the flow; they are not $\FFF$-invariant in general. Given a set  $U\subset \mathcal M(X)$, let
\[ 
X_{t, U} := \{x \in X \mid\EEE_t(x) \in U\}.
\]

From now on, we fix $\eta>0$, and let $ \mathcal B:= B(\mu, 5 \eta)$ and for $m\geq 1$, let
\begin{equation}\label{eqn:Y}
Y_m := \{x \mid f_{s}x \in  X_{m,  \mathcal {\overline B}} \text{ for all }s \geq 0 \}.
\end{equation}
Each $Y_m$ is closed and forward invariant, so we can consider the dynamics of the semi-flow $\FFF^+=\{f_t: t \geq0\}$ on $Y_m$. We could modify the definition of $Y_m$ by replacing ``$s\geq 0$'' with ``$s \in \R$'' to get a flow-invariant set, but we avoid this to simplify the book-keeping of arguments that appear later in our proof. It is unproblematic to work with a set which is only forward invariant because measures which are invariant for $\FFF^+|_{Y_m}$ can easily be shown to be invariant for $\mathcal F$. More precisely, consider $\nu\in\mathcal{M}_{\FFF^+}(Y_m)$. Then for each $t \geq0$, $\nu\in \mathcal M_{f_t}(X)$. Since $f_t$ is invertible, then $\nu$ is $f_{-t}$ invariant. Thus $\nu \in \mathcal M_{\FFF}(X)$. We prove the following lemma.
\begin{lem}\label{ergodic}
For any $m \geq 1$, if $\nu\in\mathcal{M}_{\FFF^+}(Y_m)$, then $D(\mu,\nu)\le 6 \eta$.

\begin{proof}
Assume that $\nu\in\mathcal{M}_{\mathcal F^+}(Y_m)$ is ergodic. Then there exists a generic point $x\in Y_{m}$ so
$\mathcal{E}_t(x)$ converges to $\nu$. For a large value of $t$, we chop the orbit $(x,t)$ into segments of length $m$ and a remainder, and use that for each $i$, $f_{im}x \in  X_{m,  \mathcal {\overline B}}$. More precisely, for $t\in \R$, write $t=sm+q$ where $s$ is an integer and $0\le q<m$. Then
\begin{align*}
D(\mathcal{E}_t(x),\mu)
&\le
\sum_{i=0}^{s-1}\frac{m}{t}D(\mathcal{E}_{m}(f_{im}x),\mu)
+\frac{q}{t}D(\mathcal{E}_q(f_{sm}x),\mu).
\end{align*}
Since by \eqref{eqn:Y}, $D(\mathcal{E}_{m}(f_{im}x),\mu)\leq 5\eta$, we have $\sum_{i=0}^{s-1}\frac{m}{t}D(\mathcal{E}_{m}(f_{im}x),\mu) \leq 5 \eta$. For the remaining error term, writing $M$ for the diameter of the space of probability measures on $X$, let $t$ be large enough so that $mM/t< \eta$. Then $D(\mathcal{E}_t(x),\mu) < 6 \eta$. Thus, taking $t\rightarrow\infty$, we have the lemma for $\nu$ ergodic. The result for $\nu$ non-ergodic follows from ergodic decomposition.
\end{proof}
\end{lem}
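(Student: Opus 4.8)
The plan is to reduce to the ergodic case, exploit a generic point, and lean on a convexity property of the metric $D$. First I would pin down $D$: rather than ``any'' metric compatible with the weak$^\ast$ topology, I would work with the explicit metric $D(\mu,\nu)=\sum_{n\geq1}2^{-n}\|\psi_n\|^{-1}\left|\int\psi_n\,d\mu-\int\psi_n\,d\nu\right|$ for a fixed dense sequence $\{\psi_n\}\subset C(X)\setminus\{0\}$ (as in \S6.1 of \cite{Walters}). The point of this choice is that $D$ is jointly convex, $D\big(\mu,\sum_i\lambda_i\nu_i\big)\leq\sum_i\lambda_i D(\mu,\nu_i)$ for any convex combination, because this inequality holds term-by-term inside the defining sum. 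Granting this, it suffices to prove $D(\mu,\nu)\leq 6\eta$ when $\nu$ is \emph{ergodic}: for general $\nu\in\mathcal{M}_{\FFF^+}(Y_m)$ I would take the ergodic decomposition $\nu=\int\nu_x\,d\theta(x)$; since $Y_m$ is closed and $\nu(Y_m)=1$ we get $\nu_x(Y_m)=1$ for $\theta$-a.e.\ $x$, so each such $\nu_x\in\mathcal{M}_{\FFF^+}(Y_m)$ is ergodic, and convexity of $D$ gives $D(\mu,\nu)\leq\int D(\mu,\nu_x)\,d\theta(x)\leq 6\eta$.

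Now let $\nu\in\mathcal{M}_{\FFF^+}(Y_m)$ be ergodic. By the Birkhoff ergodic theorem for flows the set of $x$ with $\EEE_t(x)\to\nu$ (weak$^\ast$) has full $\nu$-measure, and it meets $Y_m$ since $\nu(Y_m)=1$; fix such a generic point $x\in Y_m$. For a large value of $t$, write $t=sm+q$ with $s\in\mathbb{N}$ and $0\leq q<m$. Splitting the integral $\frac1t\int_0^t\psi(f_ux)\,du$ into $s$ blocks of length $m$ followed by a tail of length $q$ exhibits $\EEE_t(x)$ as the convex combination $\sum_{i=0}^{s-1}\frac mt\EEE_m(f_{im}x)+\frac qt\EEE_q(f_{sm}x)$ (the tail term simply being absent when $q=0$), so convexity of $D$ gives
\[
D(\EEE_t(x),\mu)\leq\sum_{i=0}^{s-1}\frac mt D(\EEE_m(f_{im}x),\mu)+\frac qt D(\EEE_q(f_{sm}x),\mu).
\]

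To finish I would bound the two pieces. Because $x\in Y_m$, every forward iterate $f_{im}x$ lies in $X_{m,\overline{\mathcal B}}$, i.e.\ $\EEE_m(f_{im}x)\in\overline{B(\mu,5\eta)}$, hence $D(\EEE_m(f_{im}x),\mu)\leq 5\eta$ and the first sum is at most $5\eta\cdot\frac{sm}{t}\leq 5\eta$. For the tail, bound $D(\EEE_q(f_{sm}x),\mu)$ by the diameter $M$ of the space of probability measures on $X$, so the tail term is at most $\frac{qM}{t}\leq\frac{mM}{t}$, which is $<\eta$ once $t>mM/\eta$. Thus $D(\EEE_t(x),\mu)<6\eta$ for all sufficiently large $t$; letting $t\to\infty$ and using $\EEE_t(x)\to\nu$ together with continuity of $D$ yields $D(\mu,\nu)\leq 6\eta$. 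The only genuine subtlety is the very first step, namely arranging that $D$ may be taken convex (so that ``any choice of metric compatible with the weak$^\ast$ topology'' is read as ``a suitable such metric''); everything after that is bookkeeping, the minor points being to choose the generic point inside $Y_m$ and to handle the $q=0$ case, both noted above.
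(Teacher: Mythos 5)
Your proof is correct and follows essentially the same route as the paper: decompose $\EEE_t(x)$ for a generic point $x\in Y_m$ into blocks of length $m$ plus a remainder, bound the blocks by $5\eta$ via the definition of $Y_m$, absorb the tail into an extra $\eta$, and handle non-ergodic $\nu$ by ergodic decomposition. Your added care in fixing a jointly convex metric $D$ (so that the block decomposition inequality is actually valid) is a legitimate refinement of a point the paper leaves implicit, but it does not change the argument.
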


We will let $Y:= Y_{Kn}$ for values of $K$ and $n$ to be chosen shortly. By expansivity, the entropy map $\mu \to h_\mu$ is upper semi-continuous. So by the variational principle and the fact that measures in $Y$ are weak$^\ast$-close to $\mu$, then the topological entropy of $Y$ cannot be much larger than $h_{\mu}$; by choosing $\eta$ small enough, we can guarantee that $h(Y) < h_{\mu}+\gamma$. To show that $Y$ has entropy close to $h_{\mu}$, we use our specification property to build a large number of $(t,\epsilon)$-separated points inside $Y$ for arbitrarily large $t$, thus giving a lower bound on the topological entropy of $Y$.

We rely on the following result, whose proof is a general argument based on the definition of entropy and the Birkhoff ergodic theorem. In the discrete-time case, it is a corollary of Proposition 2.1 of \cite{PfS} (see also Proposition 2.5 of \cite{Yamamoto}).

\begin{prop}\label{lem:nepssep}
Let $\mu$ be ergodic and $h< h_{\mu}$. Then there exists $\epsilon>0$ such that for any neighborhood $U$ of $\mu$, there exists $T$ so that for any $t \geq T$ there exists a $(t, \epsilon)$-separated set $\Gamma \subset X_{t, U}$ such that $\# \Gamma \geq e^{th}$.
\end{prop}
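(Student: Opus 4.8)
The plan is to run the classical ``ergodic measures carry entropy'' argument adapted to continuous time. Fix an ergodic $\mu$ and $h<h_\mu$. I would first choose a finite measurable partition $\mathcal P$ of $X$ whose boundary has $\mu$-measure zero, refined enough that the entropy of the time-$1$ map $f_1$ with respect to $\mathcal P$ exceeds $h$; this is possible since $h_{f_1}(\mu)=h_\mu>h$ (the entropy of the time-one map equals the entropy of the flow). Since $\partial \mathcal P$ is $\mu$-null, the sets in $\mathcal P$ are continuity sets, so membership in a given cylinder over $\mathcal P$ is a ``weak$^\ast$-robust'' condition: if $y$ spends a definite proportion of the interval $[0,t]$ with $f_sy$ in the same partition elements as $f_sx$, then $\EEE_t(y)$ is close to $\EEE_t(x)$. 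This is what will let us land the separated set inside $X_{t,U}$.

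Next I would invoke the Shannon--McMillan--Breiman theorem together with the Birkhoff ergodic theorem: for a set $G$ of points of $\mu$-measure close to $1$, and for all large $n\in\mathbb N$, the number of $\mathcal P^{(n)}$-cylinders (where $\mathcal P^{(n)}=\bigvee_{i=0}^{n-1}f_1^{-i}\mathcal P$) needed to cover $G$ is at least $e^{nh'}$ for some $h'$ with $h<h'<h_\mu$, and moreover for $x\in G$ the empirical measure $\EEE_n(x)$ lies in any prescribed neighborhood $U$ of $\mu$ once $n$ is large. Picking one point from each such cylinder that meets $G$ gives a collection of $\geq e^{nh'}$ points in $X_{n,U}$ that are pairwise ``$\mathcal P^{(n)}$-separated.'' The final step is to upgrade $\mathcal P^{(n)}$-separation to $(t,\epsilon)$-separation in the metric sense: choose $\epsilon>0$ smaller than the Lebesgue number associated to the open cover obtained by slightly shrinking the elements of $\mathcal P$, and discard the (exponentially small, by the boundary having measure zero and SMB applied to a slightly coarser/finer pair of partitions) proportion of points that spend too much time near $\partial\mathcal P$; the surviving points are genuinely $(t,\epsilon)$-separated with $t=n$, and their count still dominates $e^{th}$ after absorbing the losses into the gap between $h$ and $h'$. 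Passing from integer times $t=n$ to all real $t\geq T$ is routine: an orbit segment of length $t$ contains one of length $\lfloor t\rfloor$, and the empirical measures over $[0,t]$ and $[0,\lfloor t\rfloor]$ differ by $O(1/t)$ in $D$, so enlarging $T$ handles it.

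The main obstacle, and the step deserving the most care, is making the partition argument interact cleanly with the weak$^\ast$ neighborhood $U$ and with metric separation simultaneously: one must choose $\mathcal P$ with null boundary \emph{and} small diameter relative to $\epsilon$, then control the measure of the ``bad'' set of points whose orbit spends an atypical fraction of time within distance $\epsilon$ of $\partial\mathcal P$, so that cylinder-separation really does force metric $(t,\epsilon)$-separation for the retained points. This is where the argument in \cite{PfS,Yamamoto} does its real work, and in the flow setting one additionally has to be slightly careful that the time-one partition entropy genuinely captures $h_\mu$ and that the Birkhoff averages controlling both $\EEE_t(x)\in U$ and the $\partial\mathcal P$-avoidance hold on a common generic set. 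Expansivity of the flow is not needed for this proposition itself, but it is what guarantees afterwards (via upper semicontinuity of entropy) that the set $Y$ built from these separated points has entropy close to, and not merely bounded below by, $h_\mu$.
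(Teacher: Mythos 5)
Your outline follows the route the paper itself intends: the paper gives no self-contained proof of Proposition \ref{lem:nepssep}, deferring to Proposition 2.1 of \cite{PfS} (see also \cite{Yamamoto}), whose proof is precisely the partition/Shannon--McMillan--Breiman/Birkhoff counting argument you describe, run for the time-one map and transferred to the flow (this transfer is unproblematic, as you note: $h_{f_1}(\mu)=h_\mu$, separation for $f_1$ implies $(t,\epsilon)$-separation for the flow since the flow metric $d_t$ dominates the discrete one, continuous-time Birkhoff plus Egorov puts the chosen representatives in $X_{t,U}$, and your quantifier order is correct because $\mathcal P$, and hence $\epsilon$, is fixed before $U$).

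However, the step you yourself flag as the main obstacle is the one place where your proposed mechanism fails as stated. Picking one point of $G$ from each $\mathcal P^{(n)}$-cylinder meeting $G$ and then discarding the points whose orbits spend more than a proportion $\delta$ of the time $\epsilon$-close to $\partial\mathcal P$ does \emph{not} make the surviving family $(n,\epsilon)$-separated: two retained points in different cylinders may have itineraries differing only at a few times, all of which are times when both points fail to be $\epsilon$-deep in their partition elements, and then no time witnesses a distance $\geq\epsilon$. What the boundary-avoidance actually buys is the converse implication: if two retained points are \emph{not} $(n,\epsilon)$-separated, then at every coordinate where their itineraries differ neither point is $\epsilon$-deep, so the itineraries differ in at most $\delta n$ coordinates. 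The missing ingredient is therefore a pairwise Hamming-separation extraction: from the $\geq e^{nh'}$ cylinders meeting $G$ one must select a subfamily whose itineraries pairwise differ in more than $\delta n$ coordinates. Since at most $\sum_{j\leq \delta n}\binom{n}{j}(\#\mathcal P)^{j}$ cylinders lie within Hamming distance $\delta n$ of a given one, this costs an exponential factor whose rate tends to $0$ as $\delta\to 0$, and it is absorbed into the gap $h'-h$; equivalently one can use Katok's device of compact cores $Q_i\subset P_i$ at pairwise distance $\geq\epsilon$ together with the same combinatorial count. With that insertion (which is exactly where \cite{PfS} does its work) your argument goes through; the remaining points you mention (real versus integer times, choosing $\mathcal P$ with null boundary and entropy $>h$) are indeed routine, and the ``weak$^\ast$-robustness of cylinders'' remark in your first paragraph is unnecessary once the representatives are taken inside the Birkhoff-generic set $G$.
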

Now use the ergodic decomposition of $\mu$ to find $\lambda = \sum_{i=1}^p a_i \mu_i$ such that the $\mu_i$ are ergodic, the $a_i \in (0,1)$ such that $\sum_{i=1}^p a_i=1$, $D(\mu,\lambda)\leq \eta$, and $h_\lambda > h_{\mu}-\eta$.
See \cite{Yo} for a proof that this is possible.

Let $h_i = 0$ when $h_{\mu_i}=0$, and $\max(0,\, h_{\mu_i} - \eta) < h_i < h_{\mu_i}$ otherwise.  Take $3 \epsilon_i$ and $T_i$ so that  the conclusion of Proposition \ref{lem:nepssep} holds for $\mu_i$ and $h_i$, and let $\epsilon'$ be the minimum of the $\epsilon_i$, and $T$ be the maximum of the $T_i$.  Let 
\[
\Var(D, \epsilon):= \sup \{D(\delta_x, \delta_y) \mid d(x, y)< \epsilon\},
\]
where $\delta_x$ denotes the Dirac measure at $x$. Note that since the map $x \to \delta_x$ is continuous, we have $\Var(D, \epsilon) \to 0$ as $\epsilon \to 0$. Choose $\epsilon<\epsilon'$ so that $\Var(D, \epsilon) < \eta$. Choose $t$ such that letting $t_i := a_it $, then $t_i\geq T$ for every $i$. Note that $t = \sum_{i=1}^p t_i$. We are free to choose $t$ as large as we like relative to $p$, and $\tau(\epsilon)$, the maximum transition time provided by the weak specification property for $\FFF$ at scale $\epsilon$. We will specify how large $t$ should be chosen later. 

Let $U_i = B(\mu_i, \eta)$. Take  $(t_i, 3 \epsilon)$-separated sets $\Gamma_i \subset X_{t_i, U_i}$ such that $\# \Gamma_i \geq e^{t_ih_i}$. Now we use the weak specification property for the flow at scale $\epsilon$ to define a map
\[
\Phi: \prod_{i=1}^\infty (\Gamma_1 \times \cdots  \times \Gamma_p) \to X.
\]
That is, given $(x_{11}, \ldots x_{1p}, x_{21}, \ldots, x_{2p}, \ldots)$, where $x_{ij} \in \Gamma_j$, we find a point $y \in X$ which $\epsilon$-shadows $(x_{11}, t_1)$, then after a transition period of time at most $\tau$, $\epsilon$-shadows $(x_{12}, t_2)$, and so on. Such a $y$ can be found by the infinitary version of the weak specification property, see Lemma \ref{infinitary-specification}.

We will show that for sufficiently large $t$, the image of $\Phi$ is a subset of $Y$. We then use $\Phi$ to construct $(t, \epsilon)$-separated sets for large $t$ which satisfy cardinality estimates that yield the estimate we require on $h(Y)$.

First we show that the image of $\Phi$ belongs to $Y$. The construction was chosen so that each time a portion of the orbit of $y$ approximates a sequence of orbit segments in $\Gamma_1 \times \cdots  \times \Gamma_p$, the orbit has spent exactly the right amount of time approximating each of $\mu_1, \ldots, \mu_p$ so that the appropriate empirical measure for $y$ is close to $\mu$. Thus, in what follows, we show that the empirical measures of $y$ are close to $\mu$ along a subsequence corresponding to the times when $y$ approximates a sequence in $\prod_{i=1}^k (\Gamma_1 \times \cdots  \times \Gamma_p)$. From there we bootstrap to all sufficiently large times.   

Fix a point $y$ in the image of $\Phi$, so $y=\Phi (x_{11}, \ldots x_{1p}, x_{21}, \ldots, x_{2p}, \ldots)$, where $x_{ij} \in \Gamma_j$ for all $i\geq1, j\in\{1, \ldots, p\}$. Let $\tau_{ij}(y)$ be the length of the transition time in the specification property that occurs immediately after approximating the orbit segment $(x_{ij}, t_j)$.  
Let $c = \sum_{i=1}^p t_i + (p-1)\tau$ and $b_k=kc+(k-1)\tau$. Then $c$ is the upper bound on the total time taken to approximate a sequence of orbits in $\Gamma_1 \times \cdots  \times \Gamma_p$, and $b_k$ is the upper bound on time spent approximating a sequence of orbits in $\prod_{i=1}^k(\Gamma_1 \times \cdots  \times \Gamma_p)$. The precise time to approximate such a sequence of orbits for a point $y$ is given by $c_k(y) = \sum_{i=1}^p t_i + \sum_{i=1}^{p-1}\tau_{ki}(y)$ and 
$b_k(y)= \sum_{i=1}^k c_i(y) +\sum_{i=1}^{k-1}\tau_{ip}(y)$ respectively (with $b_0=b_0(y)=0$).

\begin{lem}\label{lem:approx}
If $t$ was chosen sufficiently large, then $D(\EEE_{c}(f_{b_k(y)}y),\mu)\leq 5\eta$ for all $k \geq 0$.
\end{lem}

\begin{proof}
Fix $k\geq1$, and write $y'=f_{b_{k-1}(y)}y$, $\tau_j= \tau_{kj}(y)$, and $s_i = \sum_{j=1}^it_j + \sum_{j=1}^{i-1} \tau_j$, so $s_i$ is the total time that $y'$ initially spends approximating the corresponding sequence in $\Gamma_1 \times \cdots  \times \Gamma_i$. Then, writing $M$ for the diameter of $\mathcal M_{\mathcal F}(X)$ in the metric $D$, we remove the `uncontrolled' portion of the orbit of $y$ from consideration by using the estimate
\[
D \left (\EEE_{c}(y'), \sum_{i=1}^p \frac{t_i}{c} \EEE_{t_i}(f_{s_{i-1}+ \tau_{i-1}}y') \right) \leq \frac{p}{c} \tau M.
\]
Now since $d_{t_i}(f_{s_{i-1}+ \tau_{i-1}}y', x_{ki})< \epsilon$, for each $i$, we have
\[
D \left (\EEE_{t_i}(f_{s_{i-1}+ \tau_{i-1}}y'), \EEE_{t_i}(x_{ki}) \right)< t_i \Var(D, \epsilon) < t_i \eta.
\]
Thus, by choosing $t$, and hence $c$, so large that $\frac{p}{c} \tau M< \eta$, we have
\[
D \left (\EEE_{c}(y'), \sum_{i=1}^p \frac{t_i}{c} \EEE_{t_i}(x_{ki}) \right) < \frac{p}{c} \tau M + \sum_{i=1}^p\frac{t_i}c \eta  < 2 \eta.
\]
Now since for each $i$, $x_{ki} \in X_{t_i, U_i}$, we have
\[
D \left (\sum_{i=1}^p \frac{t_i}{c} \EEE_{t_i}(x_{ki}), \sum_{i=1}^p \frac{t_i}{c} \mu_i \right) \leq \sum_{i=1}^p \frac{t_i}{c}\eta < \eta.
\]
Furthermore, we have  $t \leq c = \sum_{i=1}^p t_i + (p-1)\tau \leq t+p\tau$, so if $t$ is chosen to be much larger than $p\tau$ then $t_i/c$ is close to $t_i/t=a_i$ and we can ensure that
\[
D \left (\sum_{i=1}^p \frac{t_i}{c} \EEE_{t_i}(x_{ki}), \sum_{i=1}^p a_i \mu_i \right) < \eta. \]
Putting all this together, we have 
\begin{align*}
D(\EEE_{c}(f_{b_{k-1}}y),\mu) \leq & ~D \left (\EEE_{c}(y'), \sum_{i=1}^p \frac{t_i}{c} \EEE_{n_i}(x_{ki}) \right)
+ D \left (\sum_{i=1}^p \frac{t_i}{c} \EEE_{t_i}(x_{ki}), \sum_{i=1}^p \frac{t_i}{c} \mu_i \right) \\
& + D \left (\sum_{i=1}^p \frac{t_i}{c} \mu_i, \sum_{i=1}^p a_i \mu_i \right) 
+ D\left (\sum_{i=1}^p a_i \mu_i, \mu \right) < 5\eta. \qedhere
\end{align*}
\end{proof}
The previous lemma was where we required that $t$ is large relative to $\tau$ and $p$. In the next lemma, we specify how large $K$ needs to be chosen. The idea is that an orbit segment of $y$ of length $K(c+ \tau)$ will consist of $K-2$ sub-segments of length $c$ where Lemma \ref{lem:approx} applies and so the empirical measures along the subsegments are close to $\mu$. Additional deviation of the empirical measure along the whole orbit segment is made arbitrarily small  by choosing $K$ large. This is the strategy for the proof of the following lemma.

\begin{lem} \label{lem: in Y}
If $y$ is a point in the image of $\Phi$, then $y \in Y$.
\end{lem}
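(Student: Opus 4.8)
The plan is to unwind the definition of $Y$. Since $Y_{m}=\{x : f_s x\in X_{m,\overline{\mathcal B}}\text{ for all }s\ge 0\}$ with $\overline{\mathcal B}=\overline{B(\mu,5\eta)}$ and $X_{m,U}=\{x:\EEE_m(x)\in U\}$, to prove $y\in Y=Y_{Kn}$ it suffices to show that $D(\EEE_{m}(f_s y),\mu)\le 5\eta$ for every $s\ge 0$, where $m=Kn$ is the window length. We take $m$ to be a large multiple of $c+\tau$, the upper bound for the length of a single ``$\Gamma_1\times\cdots\times\Gamma_p$'' block in the orbit of $y$; this fixes the choice of $K$ and $n$. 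Recall that the forward orbit of $y=\Phi(x_{11},\dots,x_{1p},x_{21},\dots)$ splits into consecutive blocks, the $k$-th of which occupies the interval $[b_{k-1}(y),b_k(y)]$ and has length $\ell_k:=b_k(y)-b_{k-1}(y)=c_k(y)+\tau_{kp}(y)\in[\sum_i t_i,\,c+\tau]$; during the first $c_k(y)$ units of that interval the orbit of $y$ successively $\epsilon$-shadows $(x_{k1},t_1),\dots,(x_{kp},t_p)$, separated by transitions of length at most $\tau$.

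Fix $s\ge 0$. I would first locate the block boundaries in the window: let $k_0=\min\{k:b_k(y)\ge s\}$ and $k_1=\max\{k:b_k(y)\le s+m\}$, so that $[s,s+m]$ is, up to endpoints, the disjoint union of the ``end'' intervals $[s,b_{k_0}(y)]$, $[b_{k_1}(y),s+m]$ and the full blocks $[b_{k-1}(y),b_k(y)]$ for $k_0<k\le k_1$. Since every $\ell_k\le c+\tau$, each end interval has length at most $c+\tau$, and since $\sum_{k_0<k\le k_1}\ell_k=b_{k_1}(y)-b_{k_0}(y)\ge m-2(c+\tau)$ while each $\ell_k\le c+\tau$, there are at least $K-2$ full blocks. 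Writing $\EEE_{m}(f_s y)$ as the corresponding convex combination of empirical measures and using the convexity of $D$ already exploited in Lemmas \ref{ergodic} and \ref{lem:approx}, we get
\[
D(\EEE_{m}(f_s y),\mu)\ \le\ \frac{2(c+\tau)}{m}\,M\ +\ \sum_{k_0<k\le k_1}\frac{\ell_k}{m}\,D\bigl(\EEE_{\ell_k}(f_{b_{k-1}(y)}y),\,\mu\bigr),
\]
where $M$ is the diameter of $\mathcal M_{\FFF}(X)$ in the metric $D$. For the first term, choosing $K$ (hence $m$) large gives $\frac{2(c+\tau)}{m}M<\eta$. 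For each summand I would re-run the argument of Lemma \ref{lem:approx} with the full block length $\ell_k$ in place of $c$: the only differences are that the transition parts now carry total weight at most $p\tau/\sum_i t_i$ and that $\ell_k/\big(\sum_i t_i\big)$ is close to $1$, so by choosing $\sum_i t_i$ large and $\Var(D,\epsilon)$ small at the outset these extra pieces contribute with room and one obtains $D(\EEE_{\ell_k}(f_{b_{k-1}(y)}y),\mu)\le 4\eta$ for every $k$. Since the weights $\ell_k/m$ over $k_0<k\le k_1$ sum to at most $1$, combining the two estimates yields $D(\EEE_{m}(f_s y),\mu)\le \eta+4\eta=5\eta$, as required; as $s\ge0$ was arbitrary, $y\in Y$.

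The main obstacle is the interplay between the error bookkeeping and the variable lengths of the blocks and transitions — precisely the point singled out in the paper as a non-trivial extension of the discrete-time proofs. Because the blocks and the gaps between shadowed segments do not all have the same length, one cannot tile $[s,s+m]$ by the length-$c$ sub-windows of Lemma \ref{lem:approx} without overlaps or holes; this is why I would organize the decomposition around whole blocks $[b_{k-1}(y),b_k(y)]$ and re-establish the estimate of Lemma \ref{lem:approx} on them. Equally, since the target is $\overline{B(\mu,5\eta)}$ and not merely $\overline{B(\mu,6\eta)}$, the constants must be selected in the right order: first $\eta$, then $\epsilon$ small enough that $\Var(D,\epsilon)$ and the radii of the $U_i=B(\mu_i,\cdot)$ contribute with a definite surplus, then $t$ (hence $\sum_i t_i$, $c$) large, and only then $K$ (hence $m$) large, so that the per-block bound comes out strictly below $5\eta$ and the surplus absorbs the $\tfrac{2(c+\tau)}{m}M$ end-effect. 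Expansivity and weak specification are not needed again in this lemma; they were already used to build $y$ via $\Phi$ and to prove Lemma \ref{lem:approx}.
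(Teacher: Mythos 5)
Your proof is correct and follows essentially the same route as the paper: decompose the window $[s,s+m]$ along the block boundaries $b_k(y)$, apply the estimate of Lemma \ref{lem:approx} on each full shadowing block, and absorb the two end pieces and the transition segments into an error of order $M(c+\tau)/m$, made small by taking $t$ and then $K$ large. The only difference is bookkeeping: you tile with the exact variable-length blocks $\ell_k$ and tighten the per-block estimate to $4\eta$ so the total lands at $5\eta$, whereas the paper uses fixed length-$c$ sub-windows based at the $b_k$'s and is looser with the final constant; both versions of the accounting work.
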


\begin{proof}
Given $s\geq0$, we need to show that $f_sy \in X_{Kt, \overline {\mathcal B}}$ for a suitably chosen $K$. The idea is that taking the unique $m$ so that $b_{m}(y) < s \leq b_{m+1}(y)$, we have
\[
\EEE_{Kt}(f_s y) = \sum_{i=1}^{K-2}\frac{c}{Kt} \EEE_c (f_{b_{m+i}(y)}y) + \text{ error. }
\]
The error term has two sources. First, there are at most $K$ segments of $y$'s orbit, each of length at most $\tau$, used as the transition segments in the application of the specification property in the construction of $\Phi$. Second, there is a run of length at most $t+\tau$ at both the start and end of the orbit segment $(f_sy, Kt)$. More precisely, using Lemma \ref{lem:approx}, we have  
\[
D(\EEE_{Kt}(f_sy), \mu) \leq \frac{c(K-2)}{Kt} 5 \eta + \frac{\tau K}{Kt}M + \frac{2M(t+\tau)}{Kt} \leq \frac{c}{t} 5 \eta +\frac{\tau M}{t} + \frac{2M}{K} + \frac{2M\tau}{Kt}.
\]
We see that if $K$ and $t$ are large enough, then the right hand side is arbitrarily small. Thus $y \in Y_{Kt}=Y$.
\end{proof}

Now we prove our entropy estimates. 
We use $\Phi$ to define a map
\[
\Phi_m: \prod_{i=1}^m (\Gamma_1 \times \cdots  \times \Gamma_p) \to Y.
\]
For each $\underline x \in \prod_{i=1}^m (\Gamma_1 \times \cdots  \times \Gamma_p)$, we make a choice of $\underline y \in \prod_{i=1}^\infty (\Gamma_1 \times \cdots  \times \Gamma_p) $ with $y_{ij}=x_{ij}$ for $i \in \{1, \ldots, m\}$, $j \in \{1, \ldots, p\}$, and we define $\Phi_m(\underline x):= \Phi(\underline y)$. By Lemma \ref{lem: in Y}, the image of $\Phi_m$ belongs to $Y$. For $j \in\{1, \ldots, mp-1\}$, let $\tau_j(\underline x) \in [0, \tau]$ denote the $j$th transition time that occurs when applying the specification property in the definition of $\Phi_m(\underline x)$.

\begin{lem}
There exists a constant $C$ so that for all $m$, the image of $\Phi_m$ contains a $(b_m, \epsilon/2)$-separated set $E_m$ with $\# E_m \geq C^{-m}\# \prod_{i=1}^m (\Gamma_1 \times \cdots  \times \Gamma_p)$.
\end{lem}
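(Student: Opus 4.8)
The plan is to realise $E_m$ as a \emph{maximal} $(b_m,\epsilon/2)$-separated subset of $\mathrm{Im}(\Phi_m)$, where I set $b_m:=mt+mp\tau$ (an upper bound for the total construction time $b_m(z)$ of every $z\in\mathrm{Im}(\Phi_m)$; this is the only feature of $b_m$ that the subsequent entropy estimate will use). By maximality every point of $\mathrm{Im}(\Phi_m)$ lies within $d_{b_m}$-distance $\epsilon/2$ of some element of $E_m$, so
\[
\#\textstyle\prod_{i=1}^m(\Gamma_1\times\cdots\times\Gamma_p)\;\le\;\#E_m\cdot\max_{e\in E_m}\#\Phi_m^{-1}\big(B_{b_m}(e,\epsilon/2)\big).
\]
Thus the whole problem reduces to producing a constant $C=C(p,\tau,\epsilon,\FFF)$, \emph{independent of $m$}, such that $\#\Phi_m^{-1}(B_{b_m}(e,\epsilon/2))\le C^m$ for every $e$; the lemma then follows with this $C$.

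The first ingredient is a unique-reconstruction step. Fix $e$ and $\underline x$ with $\Phi_m(\underline x)\in B_{b_m}(e,\epsilon/2)$, list the $mp$ orbit segments shadowed by $\Phi_m(\underline x)$ in order as $(x^{(1)},\ell_1),\dots,(x^{(mp)},\ell_{mp})$ (so $x^{(k)}$ lies in the appropriate $\Gamma_j$ and $\ell_k\in\{t_1,\dots,t_p\}$), and let $S_k=S_k(\underline x)$ be the time at which the $k$-th segment begins to be shadowed, so $S_1=0$, $S_{k+1}=S_k+\ell_k+\tau_k(\underline x)$ with $\tau_k(\underline x)\in[0,\tau]$, and $S_k+\ell_k\le b_m$. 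Combining the shadowing estimate $d_{\ell_k}(f_{S_k}\Phi_m(\underline x),x^{(k)})<\epsilon$ with $d_{\ell_k}(f_{S_k}\Phi_m(\underline x),f_{S_k}e)\le d_{b_m}(\Phi_m(\underline x),e)<\epsilon/2$ gives $d_{\ell_k}(f_{S_k}e,x^{(k)})<\tfrac32\epsilon$. Since we may assume, replacing $\epsilon$ by a smaller constant if necessary (only a fixed multiple of $\epsilon$ is relevant), that each $\Gamma_j$ is $(t_j,5\epsilon)$-separated, $x^{(k)}$ is the \emph{unique} element of its $\Gamma_j$ lying in the $d_{\ell_k}$-ball of radius $2\epsilon$ about $f_{S_k}e$. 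Hence $\underline x$ is completely determined by $e$ together with the start-time sequence $(S_1,\dots,S_{mp})$.

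The heart of the matter, and the step I expect to be the main obstacle, is that the transition times $\tau_k(\underline x)$ are not prescribed, so the naive parametrisation of the start-time sequence (by the transition times themselves) is a continuum, and tracking cumulative transition time exactly would only yield a useless factorial-type bound. The fix is to record start times at a fixed resolution that is \emph{independent of the block lengths $\ell_k$}. Using compactness of $X$, choose $\delta_0>0$ with $\sup_{x\in X}d(f_sx,x)<\epsilon/2$ whenever $|s|\le\delta_0$; the key point is that this gives $d_\ell(f_az,f_bz)\le\sup_x d(f_{a-b}x,x)<\epsilon/2$ for \emph{every} $\ell$ and every $z$ whenever $|a-b|\le\delta_0$. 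Partition $[0,b_m]$ into consecutive intervals of length $\delta_0$ and let $I_k$ be the one containing $S_k$. By the resolution estimate, for $S,S'\in I_k$ the unique nearby $\Gamma_j$-element of $f_Se$ and of $f_{S'}e$ coincide, so the reconstruction above shows $\underline x$ depends only on the coarse data $(I_1,\dots,I_{mp})$. Now $I_1$ is fixed (as $S_1=0$), and since $S_{k+1}-S_k=\ell_k+\tau_k(\underline x)$ with $\tau_k(\underline x)\in[0,\tau]$, once $I_k$ is known $S_{k+1}$ is confined to an interval of length $\delta_0+\tau$, so $I_{k+1}$ has at most $C_0:=2+\lceil\tau/\delta_0\rceil$ admissible values. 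Therefore the number of admissible sequences $(I_1,\dots,I_{mp})$, hence $\#\Phi_m^{-1}(B_{b_m}(e,\epsilon/2))$, is at most $C_0^{\,mp-1}\le(C_0^{\,p})^m$; taking $C:=C_0^{\,p}$ and feeding $\#E_m\ge C^{-m}\#\prod_{i=1}^m(\Gamma_1\times\cdots\times\Gamma_p)$ back into the displayed inequality finishes the proof.
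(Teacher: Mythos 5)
Your argument is correct in substance, but it is organized differently from the paper's and it quietly requires a retuned constant. The paper fixes the combinatorics first: it discretizes the cumulative transition times into classes $\Gamma^{\bar l}$ (resolution $\zeta=\tau/k$ chosen so that flowing by less than $\zeta$ moves points less than $\epsilon/2$), pigeonholes to find one large class, and shows $\Phi_m$ is injective on that class with $(b_m,\epsilon/2)$-separated image; the separation budget there is $3\epsilon-\epsilon-\epsilon-\epsilon/2=\epsilon/2$, because two images are compared \emph{directly}, so the $(t_i,3\epsilon)$-separation of the $\Gamma_i$ from the ambient construction suffices. You instead take a maximal $(b_m,\epsilon/2)$-separated subset of the image and bound the multiplicity of $\Phi_m$ over each $\epsilon/2$-ball by reconstructing $\underline x$ from the coarse start-time data; this is a clean dual counting (net plus multiplicity bound versus pigeonhole plus injectivity on a fiber), and your resolution trick with $\delta_0$ is exactly the same key idea as the paper's choice of $\zeta$. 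The price of routing both preimages through the net point $e$ is two extra ball radii in the triangle inequality: your reconstruction needs the $\Gamma_j$ to be $(t_j,\delta)$-separated for some $\delta>\tfrac72\epsilon$, so the lemma as situated in the paper (where the $\Gamma_i$ are only $(t_i,3\epsilon)$-separated relative to specification at scale $\epsilon$) is not proved verbatim by your argument. As you note, this is harmless: the ratio $3$ in the preceding construction is arbitrary, and Proposition \ref{lem:nepssep} together with the choice of $\epsilon$ lets one take the $\Gamma_i$ to be $(t_i,5\epsilon)$-separated while still applying weak specification at scale $\epsilon$, with nothing downstream affected; but that adjustment to the setup should be stated explicitly rather than folded into the lemma's proof. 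Two smaller points are fine as written: your fixed $b_m=mt+mp\tau$ dominates every actual shadowing time, and since $d_t$ is monotone in $t$ the separation and the later entropy estimate (where $b_m/(tm)\le 1+p\tau/t$ is controlled because $t$ was chosen large relative to $p\tau$) go through; and your count $C_0^{\,mp-1}$ with $C_0=2+\lceil\tau/\delta_0\rceil$ plays exactly the role of the paper's $k^{mp-1}$.
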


\begin{proof}
Let $k\in \mathbb N$ be large enough so that, writing $\zeta:=\tau/k$,  we have $d(x,f_sx)<\epsilon/2$ for every $x\in X$ and $s\in (-\zeta,\zeta)$. We partition the interval $[0,mp \tau]$ into $kmp$ sub-intervals $I_1,\dots, I_{kmp}$ of length $\zeta$, denoting this partition as $P$.  

Given $\underline x \in \prod_{i=1}^m (\Gamma_1 \times \cdots  \times \Gamma_p)$, take the sequence $n_1, \ldots, n_k$ so that
\[
\tau_1(\underline x) + \cdots + \tau_i(\underline x) \in I_{n_i} \text{ for every } 1\leq i \leq mp-1.
\]
Now let $l_1=n_1$ and $l_{i+1}=n_{i+1}-n_i$ for  $1\leq i \leq k-2$, and let $l(\underline x):= (l_1,\dots,l_{k-1})$. Since $\tau_{i+1}(\underline x)\in[0, \tau]$, we have $n_i \leq n_{i+1} \leq n_i+k$ for each $i$, so $l(\underline x) \in  \{0,\dots, k-1\}^{mp-1}$. 

Given $\bar l 
\in \{0,\dots,k-1\}^{mp-1}$, let $\Gamma^{\bar l} \subset \prod_{i=1}^m (\Gamma_1 \times \cdots  \times \Gamma_p)$ be the set of all $\underline x$ such that $ l(\underline x) = \bar l$. If $\underline x, \underline x' \in \Gamma^{\bar l}$ and $i \in \{1, \ldots, k-1\}$, then by construction, $\tau_1(\underline x) + \cdots + \tau_i(\underline x)$ and $\tau'_1(\underline x) + \cdots + \tau'_i(\underline x)$ belong to the same element of the partition $P$.

We show that $\Phi_m$ is 1-1 on each $\Gamma^{\bar l}$. Fix $\bar l$ and let $\underline x,\underline x'\in \Gamma^{\bar l}$ be distinct.  Let $j$ be the smallest index such that $x_j\neq x_j'$. Write $\tau_i=\tau_i(\underline x)$ and $\tau_i' = \tau_i(\underline x')$.  Let $r = \sum_{i=1}^j (t_i + \tau_i)$ and $r' = \sum_{i=1}^j (t_i + \tau_i')$.  Since $\sum_{i=1}^j \tau_i$ and $ \sum_{i=1}^j \tau_i$ belong to the same element of $P$, then $|r-r'| = |\sum_{i=1}^j \tau_i - \sum_{i=1}^j \tau_i'|  < \zeta$.

Because $x_j\neq x_j'\in \Gamma_i$ for some $i\in\{i, \ldots, p\}$ and $\Gamma_i$ is $(t_i,3\epsilon)$-separated, we have $d_{t_i}(x_j,x_j') >  3\epsilon$.  Now we have
\[
d_{b_m}(\Phi_m \underline x, \Phi_m \underline x') \geq d_{t_i}(f_{r}\Phi_m \underline x,f_{r}\Phi_m \underline x') > d_{t_i}(f_{r}\Phi_m \underline x, f_{r'}\Phi_m\underline x') - \epsilon/2,
\]
where the $\epsilon/2$ term comes from the fact that $d_{t_i}(f_{r}\Phi_m\underline x',f_{r'}\Phi_m\underline x') \leq \epsilon/2$ by our choice of $\zeta$.  For the first term, observe that
\[
d_{t_i}(f_{r}\Phi_m \underline x,f_{r'}\Phi_m \underline x') \geq d_{t_i}(x_j, x_j') - d_{t_i}(x_j, f_{r}\Phi_m\underline x) - d_{t_i}(f_{r'}\Phi_m\underline x', x_j')> d_{t_i}(x_j, x_j') -2\epsilon.
\]
It follows that $d_{b_m}(\Phi_m\underline x,\Phi_m\underline x') >  \epsilon/2$. Thus, $\Phi_m$ is 1-1 on $\Gamma^{\bar l}$ and $\Phi_m(\Gamma^{\bar l})$ is $(b_m, \epsilon/2)$-separated. There are $k^{mp-1}$ choices for $\bar l$, so letting $C= k^p$, by the pigeon hole principle, there exists $\bar l$ so that $\# \Gamma^{\bar l} \geq C^{-m} \#(\prod_{i=1}^m \Gamma_1 \times \cdots  \times \Gamma_p)$. For this $\bar l$, we let $E_m := \Phi_m(\Gamma^{\bar l})$.
\end{proof}

We have that 
\begin{align*}
C^m \# E_m \geq (\prod_{i=1}^p \# \Gamma_i)^m \geq  e^{m \sum_{i=1}^pt_ih_i} =  e^{m t\sum_{i=1}^pa_ih_i} &\geq   e^{m t\sum_{i=1}^pa_i(h_{\mu_i}-\eta)} = e^{mt(h_\lambda - \eta)}.
\end{align*}
Thus, $\frac{1}{tm} \log \# E_m > h_{\mu}-2\eta - \frac{1}{t} \log C$. Note that $b_m \leq  m \sum_{i=1}^pt_i +mp\tau = m(t+p \tau)$,
and thus $tm/b_m \geq t/(t+p\tau)$. Sending $m \to \infty$, we obtain 
\[
h(Y) \geq \liminf_{m \to \infty} \frac{tm}{b_m} \frac{1}{tm}\log \#E_m \geq \frac{t}{t+p\tau}(h_\mu-2\eta-\frac{1}{t} \log C).
\]
This is true for all large $t$, so this shows that  $h(Y) \geq h_{\mu}- 2\eta$.

Since $h(Y) = \sup \{ h_\nu : \nu \text{ is ergodic and } \nu \in \mathcal M_{\FFF^+}(Y)$\}, we can find an ergodic measure $\nu$ supported on $Y$ with $h_\nu \geq h_\mu - 2 \eta$.
The discussion preceding Lemma \ref{ergodic} shows that $\nu \in \mathcal M_{\FFF}(X)$. Thus $\nu$ satisfies the conclusion of Proposition \ref{entropydenseflow}.

\subsection{Large Deviations Principle}
We obtain the large deviations principle for all the measures considered in this section. The large deviations principle is a statement which describes the decay rate of the measure of points whose Birkhoff sums are experiencing a large deviation from their expected value given by the Birkhoff ergodic theorem.

\begin{defn} \label{def:ldp}
Let $m$ be an equilibrium measure for a potential $\varphi$ (with respect to $\mathcal F$). We say that $m$ satisfies the \emph{upper large deviations principle} if for any continuous observable $\psi\colon X \to \mathbb R$ and any $\epsilon>0$,we have
\begin{equation} \label{ldp}
\limsup_{t \to \infty} \frac 1t \log m\left \{x : \left |\frac 1t \int_0^t \psi(f_sx)\,ds - \int \psi\, dm \right|\geq \epsilon \right \} \leq -q(\epsilon),
\end{equation}
where the rate function $q$ is given by
\begin{equation}\label{eqn:rate-function}
q(\epsilon) := P(\varphi) - \sup_{\substack{\nu\in\mathcal{M}_{\mathcal F}(X) \\ \left\lvert \int \psi\,dm - \int \psi\,d\nu\right\rvert \geq \epsilon }}
\left(h_{\nu} (f ) + \int \varphi \, d \nu \right),
\end{equation}
or $q(\epsilon)= \infty$ when $\{\nu\in\mathcal{M}_{\mathcal F}(X) : \left\lvert \int \psi\,dm - \int \psi\,d\nu\right\rvert \geq \epsilon\} = \emptyset$. We say that the \emph{lower large deviations principle} holds if the above statement holds with $\geq$ in place of $\leq$, and $\liminf$ in place of $\limsup$  in \eqref{ldp}. We say that $m$ satisfies the \emph{large deviations principle} if both upper and lower large deviations hold: that is, the above statement holds with equality in place of $\leq$ in \eqref{ldp}, and the $\limsup$ becomes a limit. For a discrete-time dynamical system $(X,f)$, we say the \emph{lower large deviations principle} holds (and similarly for \emph{upper}) if the above statement holds with $t$ replaced by $n$ and $\frac 1t \int_0^t \varphi(f_sx)\,ds$ replaced by $\sum_{i=0}^{n-1} \varphi(f^i x)$ in \eqref{ldp}, and $\mathcal{M}_{\mathcal F}(X)$ replaced by $\mathcal{M}_{f}(X)$ in \eqref{eqn:rate-function}.
\end{defn}

For a fixed observable $\psi$, the statement above is known as the level-1 large deviations principle.   If level-1 large deviations holds for \emph{every} continuous observable $\psi$ (as opposed to, say, only for every H\"older continuous or smooth $\psi$), then this is equivalent to the level-2 large deviations principle.  The level-2 property is often formulated as a large deviation result for empirical measures, i.e. a description of the rate of decay of the measure of the set of points $x$ satisfying $D(\mathcal E_t(x), m)\geq \epsilon$ as $t\to \infty$.
 See \cite{CRL, Yamamoto} for a precise statement of this formulation, and the argument that level-2 large deviations follows from the statement of Definition \ref{def:ldp}. We have the following result.

\begin{prop}\label{prop:large deviations}
For an expansive flow $(X, \FFF)$ with weak specification and a continuous function $\varphi:X \to \R$ with the Bowen property,  the unique equilibrium state satisfies the large deviations principle. 
\end{prop}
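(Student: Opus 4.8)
The plan is to prove the \emph{level-$2$} large deviations principle for $m:=\mu_\varphi$, from which the statement in the definition --- which asks the level-$1$ estimate for \emph{every} continuous observable $\psi$ --- follows by \cite{CRL, Yamamoto}. Writing $I(\nu):=P(\varphi)-h_\nu-\int\varphi\,d\nu$ for $\nu\in\mathcal M_{\mathcal F}(X)$ (and $I\equiv+\infty$ elsewhere on $\mathcal M(X)$), and $X_{t,U}=\{x:\EEE_t(x)\in U\}$ as in \S\ref{sec:hdense}, we must show that for every closed $\mathcal K\subseteq\mathcal M(X)$
\[
\limsup_{t\to\infty}\tfrac1t\log m(X_{t,\mathcal K})\le -\inf_{\nu\in\mathcal K}I(\nu),
\]
and for every open $\mathcal U\subseteq\mathcal M(X)$
\[
\liminf_{t\to\infty}\tfrac1t\log m(X_{t,\mathcal U})\ge -\inf_{\nu\in\mathcal U}I(\nu).
\]
The inputs are: the Gibbs property of $\mu_\varphi$ (both bounds of \eqref{eqn:gibbs}, from Theorem \ref{BFCT}); upper semicontinuity of $\nu\mapsto h_\nu$ (from expansivity); entropy density of ergodic measures (Proposition \ref{entropydenseflow}); and the separated-set construction of Proposition \ref{lem:nepssep}. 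One uses throughout that $\EEE_t(x)$ is within $O(1/t)$ of being $\mathcal F$-invariant, so that only invariant measures affect the exponential rates.

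For the upper bound, fix closed $\mathcal K$ and a real $\alpha<\inf_{\mathcal K}I$. Since $I$ is lower semicontinuous, $\{I>\alpha\}$ is open and contains the compact set $\mathcal K$, so $\mathcal K$ is covered by finitely many closed balls $\overline{\mathcal B_1},\dots,\overline{\mathcal B_N}$, chosen small enough that $I>\alpha$ even on the closed convex hull of each, and $X_{t,\mathcal K}\subseteq\bigcup_i X_{t,\mathcal B_i}$. For a single ball $\mathcal B$ and a scale $\rho>0$, take a maximal $(t,\rho)$-separated set $E_t\subseteq X_{t,\mathcal B}$; then $X_{t,\mathcal B}\subseteq\bigcup_{x\in E_t}\overline B_t(x,\rho)$, and the upper Gibbs bound yields
\[
m(X_{t,\mathcal B})\le Q(\rho)\,e^{-tP(\varphi)}\sum_{x\in E_t}e^{\Phi(x,t)}.
\]
The crucial estimate --- a ``level-$2$'' version of the easy inequality in the variational principle \cite[Theorem 9.10]{Walters}, proved by the same Misiurewicz-type argument but with every orbit segment constrained to have empirical measure in $\mathcal B$, so that the resulting invariant measure lies in the closed convex hull $\overline{\mathrm{conv}}\,\overline{\mathcal B}$ (cf.\ \cite{CRL, Yamamoto}) --- is that
\[
\limsup_{\rho\to0}\limsup_{t\to\infty}\tfrac1t\log\sum_{x\in E_t}e^{\Phi(x,t)}\le\sup\bigl\{h_\nu+\int\varphi\,d\nu:\nu\in\overline{\mathrm{conv}}\,\overline{\mathcal B}\cap\mathcal M_{\mathcal F}(X)\bigr\}=P(\varphi)-\inf_{\overline{\mathrm{conv}}\,\overline{\mathcal B}}I\le P(\varphi)-\alpha.
\]
Hence $\limsup_t\tfrac1t\log m(X_{t,\mathcal B})\le-\alpha$; summing the $N$ estimates and letting $\alpha\uparrow\inf_{\mathcal K}I$ gives the upper bound.

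For the lower bound, fix open $\mathcal U$, a measure $\nu\in\mathcal U$, and $\xi>0$. By entropy density (Proposition \ref{entropydenseflow}) pick an ergodic $\nu'\in\mathcal U$ with $\bigl|(h_{\nu'}+\int\varphi\,d\nu')-(h_\nu+\int\varphi\,d\nu)\bigr|<\xi/3$, and then $h\in(h_{\nu'}-\xi/3,h_{\nu'})$. Proposition \ref{lem:nepssep} provides $\epsilon_0>0$ (depending on $\nu',h$) such that for every sufficiently small neighbourhood $\mathcal V$ of $\nu'$ and all large $t$ there is a $(t,\epsilon_0)$-separated set $\Gamma_t\subseteq X_{t,\mathcal V}$ with $\#\Gamma_t\ge e^{th}$. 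Fix a Gibbs scale $\rho<\epsilon_0/2$ small enough that $\Var(D,\rho)<\tfrac12\dist(\nu',\mathcal U^c)$, and take $\mathcal V=B(\nu',r)$ with $r$ small enough that (i) $\bigl|\int\varphi\,d\mu-\int\varphi\,d\nu'\bigr|<\xi/3$ for all $\mu\in\mathcal V$, and (ii) the $\Var(D,\rho)$-neighbourhood of $\overline{\mathcal V}$ lies in $\mathcal U$. Because $\rho<\epsilon_0/2$, the balls $\{B_t(x,\rho):x\in\Gamma_t\}$ are pairwise disjoint; because $d_t(x,y)<\rho$ implies $D(\EEE_t(x),\EEE_t(y))\le\Var(D,\rho)$, (ii) gives $B_t(x,\rho)\subseteq X_{t,\mathcal U}$ for $x\in\Gamma_t$; and (i) gives $\Phi(x,t)>t\bigl(\int\varphi\,d\nu'-\xi/3\bigr)$ for $x\in\Gamma_t$. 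The lower Gibbs bound then gives
\[
m(X_{t,\mathcal U})\ge\sum_{x\in\Gamma_t}m(B_t(x,\rho))\ge Q(\rho)^{-1}e^{-tP(\varphi)}e^{th}e^{t(\int\varphi\,d\nu'-\xi/3)},
\]
so $\liminf_t\tfrac1t\log m(X_{t,\mathcal U})\ge -P(\varphi)+h+\int\varphi\,d\nu'-\xi/3\ge -I(\nu)-\xi$, using $h>h_{\nu'}-\xi/3$ and the choice of $\nu'$. Since $\nu\in\mathcal U$ and $\xi>0$ were arbitrary, $\liminf_t\tfrac1t\log m(X_{t,\mathcal U})\ge-\inf_{\mathcal U}I$, completing the lower bound and hence the proposition.

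I expect the main obstacle to be the counting estimate in the upper bound: that, as $\rho\to0$, the $\varphi$-weighted cardinality of $(t,\rho)$-separated sets all of whose orbit segments have empirical measure near a prescribed $\nu$ grows no faster than $e^{t(h_\nu+\int\varphi\,d\nu)}$. This is the level-$2$ sharpening of the easy half of the variational principle, and executing it requires carrying out the Misiurewicz fine-partition argument relative to the empirical-measure constraint, with attention to the convex-hull point above and to the non-invariance of $\EEE_t(x)$. A secondary (routine) point is the bookkeeping in the lower bound --- coordinating $\rho$, the constant $\epsilon_0$ of Proposition \ref{lem:nepssep}, and the radius $r$ of $\mathcal V$ --- which goes through provided the order of quantifiers is respected.
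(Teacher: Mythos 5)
Your proposal is correct in strategy, and its lower bound is essentially the paper's argument, but the upper bound takes a genuinely different route whose key step you leave unproved. For the lower bound, the paper likewise uses entropy density (Proposition \ref{entropydenseflow}) to reduce to ergodic measures, produces a $(t,\eta)$-separated set of cardinality at least $e^{t(h_\mu-\eta)}$ with controlled Birkhoff averages via the Katok entropy formula and the Birkhoff ergodic theorem, and sums the lower Gibbs bound over the resulting disjoint Bowen balls; your use of Proposition \ref{lem:nepssep} packages the same input, and your bookkeeping of $\rho$, $\epsilon_0$ and the radius of $\mathcal V$ is, if anything, more explicit than the paper's sketch (both treatments gloss the same $\geq\epsilon$ versus $>\epsilon$ boundary point in passing from level 2 to the stated rate function). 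For the upper bound the paper does not argue at level 2 at all: it passes to the time-$1$ map, notes that the Gibbs property for the flow yields a Gibbs property for $f_1$ with potential $\varphi_1=\int_0^1\varphi\circ f_s\,ds$, and quotes Theorem 3.2 of \cite{PfS}, whose hypotheses (an upper-energy function, supplied by the upper Gibbs bound, and upper semicontinuity of the entropy map, supplied by expansivity) it verifies, then transfers the statement back to the flow via $\psi_1$. You instead cover a closed set of measures by small balls and reduce to the constrained, $\varphi$-weighted counting estimate for separated sets whose empirical measures lie in a ball. That estimate is true and standard, and your remarks about the closed convex hull and the non-invariance of $\EEE_t(x)$ identify exactly the right issues, but it is the entire nontrivial content of your upper bound and your proposal only asserts it (note also that the Misiurewicz-type argument needs a time discretization in the flow setting, which is precisely what the paper's reduction to $f_1$ accomplishes). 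To complete your route you must either carry out that argument or, more economically, follow the paper and invoke \cite{PfS} for the time-$1$ map; what your route would buy, once completed, is a self-contained level-2 statement for the flow itself, avoiding the time-$1$ reduction and the passage between $\mathcal M_{f_1}(X)$ and $\mathcal M_{\mathcal F}(X)$ hidden in the paper's ``it is easy to see,'' whereas the paper's route minimizes what must be proved from scratch to just the lower bound and entropy density.
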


A large deviations result for measures with a weak Gibbs property for semi-flows  (i.e. continuous systems $(X, \{f_t\}_{t\geq0})$ which may not be invertible) with weak specification was announced in the preprint \cite{BV}. Since every flow is a semi-flow, and our equilibrium states have the Gibbs property, those results apply here.  We give a short independent proof using the entropy density of ergodic measures, which is not proved in \cite{BV}. We treat the upper and lower large deviations bounds separately.


\subsubsection{Upper large deviations}
For the upper large deviations principle, we can reduce to considering the time-$1$ map of the flow. It is easy to see that the upper large deviations principle for the flow follows from the upper large deviations principle for the time-$1$ map. This follows because \eqref{ldp} can be verified for any continuous function $\psi$ by applying the large deviations principle for the time-1 map to the continuous function $\psi_1:=\int_0^1 \psi(f_sx)ds$.

The Gibbs property \eqref{eqn:gibbs} for the flow immediately yields the Gibbs property with respect to the time-$1$ map.
\[
Q^{-1} e^{-tP(\varphi) + \sum_{i=0}^{n-1} \varphi_1(f^i x)} \leq \mu(B_n(x,\rho; f_1)) \leq Q e^{-tP(\varphi) + \sum_{i=0}^{n-1} \varphi_1(f^i x)},
\]
where $B_n(x, \epsilon; f_1) = \{y : d_1(f_1^ix, f_1^iy)< \rho \text{ for all } i \in \{0, \ldots, n-1\} \}$, and $d_1$ is the metric equivalent to $d$ given by $d_1(x, y) = \sup_{t\in[0,1)} d(f_tx, f_ty)$. Note also that from the variational principle and flow invariance of the measure $P(\varphi_1, f_1)=P(\varphi, \mathcal F)$.

It is well known that in the discrete-time case the upper large deviations principle follows from the upper Gibbs property and upper semi-continuity of the entropy map $\mu \to h_\mu$ (which follows from expansivity of the flow). This follows from Theorem 3.2 of \cite{PfS}, whose hypotheses are the existence of an \emph{upper-energy function} and upper semi-continuity of the entropy map. The existence of an upper-energy function $e_\mu$ can easily be deduced from the upper bound in the Gibbs property and by setting $e_\mu:= P(\phi_1, f_1)- \phi_1(x)$. See  \S7.2 of \cite{ClimenhagaThompsonflows} for this argument.

Thus, we have the upper large deviations for $\varphi_1$ for $\mu$ with respect to $f_1$, and thus the upper large deviations principle for $\varphi$ with respect to the flow of \eqref{ldp}. 


\subsubsection{Lower large deviations}

We now verify the lower large deviations principle. In the discrete-time case, lower large deviations is proved as Theorem 3.1 of Pfister and Sullivan \cite{PfS} under the following three hypotheses (see also Theorem 3.1 of \cite{Yamamoto}):
\begin{enumerate}
\item Upper semi-continuity of the entropy map;

\item Existence of a ``lower-energy function'', which follows easily from the lower Gibbs property;

\item Entropy density of ergodic measures in the space of invariant measures.
\end{enumerate}
The entropy density of ergodic measures is the most difficult hypothesis to check, and we carried this out in \S\ref{sec:hdense}. The rest of the argument is fairly standard. Nevertheless, we do not know of a reference in continuous time, so we sketch the proof. First observe that it is clear that entropy density of ergodic measures means that it is possible to consider only ergodic measures in the expression
\[
\sup \left \{ h_{\nu} (f ) + \int \varphi d \nu : \left\lvert \int \psi\,dm - \int \psi\,d\nu\right\rvert \geq \epsilon  \right \}.
 \]
Thus, for the lower large deviations, it will suffice  to show that for any ergodic $\mu$ with $\left\lvert \int \psi\,dm - \int \psi\,d\nu\right\rvert > \epsilon$  and $\delta>0$ sufficiently small that
\begin{equation} \label{bound}
\lim_{t \to \infty} \frac 1t \log m\left \{x : \left |\frac 1t \int_0^t \psi(f_sx)\,ds - \int \psi\, d\mu \right|\leq \delta \right \} \geq P(\varphi) - (h_{\mu} + \int \varphi d \mu).
\end{equation}
This is achieved by a combination of the Gibbs property for $m$, and basic cardinality estimates for $\mu$. A sketch goes as follows. For a suitable small $\eta>0$, from the Katok entropy formula, and the Birkhoff ergodic theorem, we can find a sequence of $(t, \eta)$ separated sets with $\#E_t > e^{t(h_{\mu}-\eta)}$ so that for $\chi \in\{\varphi, \psi\}$, we have
\[
\sup_{y \in B_t(x, \eta), x \in E_t} \left |\frac 1t \int_0^t \chi(f_sx)\,ds - \int \chi\, d\mu \right|\leq \delta. 
\]
 Then 
\[
m\left \{x : \left |\frac 1t \int_0^t \psi(f_sx)\,ds - \int \psi\, d\mu \right|\leq \delta \right \} \geq \sum_{x \in E_t} m(B_t(x, \eta)).
\]
 By the Gibbs property, $m(B_t(x, \eta)) \geq C^{-1} e^{-tP(\varphi)+ \int_0^t \varphi(f_sx)ds}$, and since $x \in E_t$, $\int_0^t \psi(f_sx)ds \geq \int t\psi d \mu-t\delta$.
 Thus
\begin{align*}
m\left \{x : \left |\frac 1t \int_0^t \psi(f_sx)\,ds - \int \psi\, d\mu \right|\leq \delta \right \} & \geq Q^{-1} \# E_t e^{-tP(\varphi)+ t\psi d \mu-t\delta} \\ &\geq Q^{-1} e^{-t(P(\varphi)- (h_{\mu} + \int \varphi d \mu)+ \eta + \delta)}
\end{align*}
The proof of the lower large deviations principle follows.

%

\bibliographystyle{amsplain}
\bibliography{biblio}

\providecommand{\bysame}{\leavevmode\hbox to3em{\hrulefill}\thinspace}
\providecommand{\MR}{\relax\ifhmode\unskip\space\fi MR }
\providecommand{\MRhref}[2]{%
  \href{http://www.ams.org/mathscinet-getitem?mr=#1}{#2}
}
\providecommand{\href}[2]{#2}
\begin{thebibliography}{10}

\bibitem{ballmann}
Werner Ballmann, \emph{Lectures on spaces of nonpositive curvature}, DMV
  Seminar, vol.~25, Birkh\"auser, Basel, 1995.

\bibitem{BV}
Thiago Bomfim and Paulo Varandas, \emph{The gluing orbit property, uniform
  hyperbolicity and large deviations principles for semiflows}, Preprint,
  ArXiv:1507.03905, to appear in Journal of Differential Equations, 2019.

\bibitem{BW}
R.~Bowen and P.~Walters, \emph{Expansive one-parameter flows}, J. Differential
  Equations \textbf{12} (1972), 180--193.

\bibitem{bowen-periodic}
Rufus Bowen, \emph{Periodic orbits for hyperbolic flows}, American Journal of
  Mathematics \textbf{94} (1972), no.~1, 1--30.

\bibitem{rB73}
Rufus Bowen, \emph{Symbolic dynamics for hyperbolic flows}, Amer. J. Math.
  \textbf{95} (1973), 429--460.

\bibitem{bowen}
Rufus Bowen, \emph{Some systems with unique equilibrium states}, Mathematical
  Systems Theory \textbf{8} (1975), no.~3, 193--202.

\bibitem{Bowen-Ruelle}
Rufus Bowen and David Ruelle, \emph{The ergodic theory of {A}xiom {A} flows},
  Inventiones Mathematicae \textbf{29} (1975), no.~3, 181--202.

\bibitem{BPP16}
A.~Broise-Alamichel, J.~Parkkonen, and F.~Paulin, \emph{Equidistribution and
  counting under equilibrium states in negative curvature and trees.
  applications to non-archimedean diophantine approximation}, Progress in
  Mathematics, vol. 329, Birkhauser, 2019, To appear.

\bibitem{BCFT}
K.~Burns, V.~Climenhaga, T.~Fisher, and D.~J. Thompson, \emph{Unique
  equilibrium states for geodesic flows in nonpositive curvature}, Geom. Funct.
  Anal. \textbf{28} (2018), no.~5, 1209--1259.

\bibitem{champetier}
Christophe Champetier, \emph{Petite simplification dans les groupes
  hyperboliques}, Ann. Fac. Sci. Toulouse Math. \textbf{3} (1994), no.~2,
  161--221.

\bibitem{ClimenhagaThompsonflows}
Vaughn Climenhaga and Daniel~J. Thompson, \emph{Unique equilibrium states for
  flows and homeomorphisms with non-uniform structure}, Adv. Math. \textbf{303}
  (2016), 744--799.

\bibitem{CRL}
Henri Comman and Juan Rivera-Letelier, \emph{Large deviation principles for
  non-uniformly hyperbolic rational maps}, Ergodic Theory Dynam. Systems
  \textbf{31} (2011), no.~2, 321--349.

\bibitem{CLMT}
D.~Constantine, J-F. Lafont, D.~McReynolds, and D.J. Thompson, \emph{Fat flats
  for rank one manifolds}, Preprint, arXiv:1704.00857, to appear in Michigan
  Mathematical Journal, 2019.

\bibitem{cp}
Michel Coornaert and Athanase Papadopoulos, \emph{Symbolic coding for the
  geodesic flow associated to a word hyperbolic group}, Manuscripta Math.
  \textbf{109} (2012), 465--492.

\bibitem{CS2}
Yves Coudene and Barbara Schapira, \emph{Generic measures for hyperbolic flows
  on non-compact spaces}, Israel Journal of Mathematics \textbf{179} (2010),
  no.~1, 157--172.

\bibitem{CS}
Yves Coud\`{e}ne and Barbara Schapira, \emph{Counterexamples in non-positive
  curvature}, Discrete \& Continuous Dynamical Systems-A \textbf{30} (2011),
  no.~4, 1095--1106.

\bibitem{De92}
Manfred Denker, \emph{Large deviation and the pressure function}, Transactions
  of the 11th Prague Conference on Information theory, Statistical Decision
  Functions, Prague, 1990, Springer, Berlin, 1992, pp.~21--33.

\bibitem{DGS}
Manfred Denker, Christian Grillenberger, and Karl Sigmund, \emph{Ergodic theory
  on compact spaces}, Lecture Notes in Mathematics, vol. 527, Springer-Verlag,
  Berlin-New York, 1976.

\bibitem{EKW}
A.~Eizenberg, Y.~Kifer, and B.~Weiss, \emph{Large deviations for
  $\mathbb{Z}^d$-actions}, Comm. Math. Phys \textbf{164} (1994), no.~3,
  433--454.

\bibitem{Ellis}
Richard~S. Ellis, \emph{Entropy, large deviations, and statistical mechanics},
  Grundlehren der Mathematischen Wissenschaften [Fundamental Principles of
  Mathematical Sciences], vol. 271, Springer-Verlag, New York, 1985.

\bibitem{eF77}
Ernesto Franco, \emph{Flows with unique equilibrium states}, Amer. J. Math.
  \textbf{99} (1977), no.~3, 486--514.

\bibitem{GK}
Katrin Gelfert and Dominik Kwietniak, \emph{On density of ergodic measures and
  generic points}, Ergodic Theory and Dynamical Systems \textbf{38} (2018),
  no.~5, 1745--1767.

\bibitem{GM}
Katrin Gelfert and Adilson~E. Motter, \emph{({N}on)invariance of dynamical
  quantities for orbit equivalent flows}, Comm. Math. Phys. \textbf{300}
  (2010), no.~2, 411--433.

\bibitem{GS14}
Katrin Gelfert and Barbara Schapira, \emph{Pressures for geodesic flows of rank
  one manifolds}, Nonlinearity \textbf{27} (2014), no.~7, 1575--1594.

\bibitem{GP}
Anton Gorodetski and Yakov Pesin, \emph{Path connectedness and entropy density
  of the space of hyperbolic ergodic measures}, Modern Theory of Dynamical
  Systems: A Tribute to Dmitry Victorovich Anosov \textbf{692} (2017), 111.

\bibitem{gromov}
M.~Gromov, \emph{Hyperbolic groups}, Essays in Group Theory (S.~Gersten, ed.),
  MSRI Publications, vol.~8, Springer, 1987, pp.~75--265.

\bibitem{vK90}
Vadim~A. Kaimanovich, \emph{Invariant measures of the geodesic flow and
  measures at infinity on negatively curved manifolds}, Ann. Inst. H.
  Poincar\'e Phys. Th\'eor. \textbf{53} (1990), no.~4, 361--393, Hyperbolic
  behaviour of dynamical systems (Paris, 1990).

\bibitem{vK91}
\bysame, \emph{Bowen-{M}argulis and {P}atterson measures on negatively curved
  compact manifolds}, Dynamical systems and related topics ({N}agoya, 1990),
  Adv. Ser. Dynam. Systems, vol.~9, World Sci. Publ., River Edge, NJ, 1991,
  pp.~223--232.

\bibitem{KH}
Anatole Katok and Boris Hasselblatt, \emph{Introduction to the modern theory of
  dynamical systems}, Cambridge University Press, 1995.

\bibitem{knieper1}
Gerhard Knieper, \emph{The uniqueness of the measure of maximal entropy for
  geodesic flows on rank {$1$} manifolds}, Ann. of Math. \textbf{148} (1998),
  no.~1, 291--314.

\bibitem{knieper2}
\bysame, \emph{The uniqueness of the maximal measure for geodesic flows on
  symmetric spaces of higher rank}, Israel J. Math. \textbf{149} (2005),
  171--183.

\bibitem{KT17}
T.~Kucherenko and D.J. Thompson, \emph{Measures of maximal entropy for
  suspension flows over the full shift}, to appear in Mathematische
  Zeitschrift, 2019.

\bibitem{LL10}
Fran{\c{c}}ois Ledrappier and Seonhee Lim, \emph{Volume entropy of hyperbolic
  buildings}, J. Mod. Dyn. \textbf{4} (2010), no.~1, 139--165.

\bibitem{LS}
Yuri Lima and Omri~M. Sarig, \emph{Symbolic dynamics for three-dimensional
  flows with positive topological entropy}, J. Eur. Math. Soc. (JEMS)
  \textbf{21} (2019), no.~1, 199--256.

\bibitem{LM}
D.~Lind and B.~Marcus, \emph{An introduction to symbolic dynamics and coding},
  Cambridge University Press, 1995.

\bibitem{matheus}
Fr\'ed\'eric Math\'eus, \emph{Flot g\'eod\'esique et groupes hyperboliques
  d'apr\'es {M}. {G}romov}, S\'eminaire de Th\'eorie Spectrale et
  G\'{e}om\'{e}trie, no.~9, 1991, pp.~67--87.

\bibitem{N}
Sheldon~E. Newhouse, \emph{Lectures on dynamical systems}, Progr. Math.,
  vol.~8, Birkh\"auser, Boston, Mass., 1980.

\bibitem{OP}
Steven Orey and Stephan Pelikan, \emph{Large deviation principles for
  stationary processes}, Ann. Probab. \textbf{16} (1988), no.~4, 1481--1495.

\bibitem{PP}
W.~Parry and M.~Pollicott, \emph{Zeta functions and the periodic orbit
  structure of hyperbolic dynamics}, Ast\'erisque, no. 187-188, Soc. Math.
  France, 1990.

\bibitem{wP88}
William Parry, \emph{Equilibrium states and weighted uniform distribution of
  closed orbits}, Dynamical systems ({C}ollege {P}ark, {MD}, 1986--87), Lecture
  Notes in Math., vol. 1342, Springer, Berlin, 1988, pp.~617--625.

\bibitem{PPS}
Fr{\'e}d{\'e}ric Paulin, Mark Pollicott, and Barbara Schapira,
  \emph{Equilibrium states in negative curvature}, Ast\'erisque (2015),
  no.~373, viii+281.

\bibitem{PfS}
C-E Pfister and W~G Sullivan, \emph{Large deviations estimates for dynamical
  systems without the specification property. {A}pplication to the
  $\beta$-shifts}, Nonlinearity \textbf{18} (2005), 237--261.

\bibitem{pollicott_nonpos}
Mark Pollicott, \emph{Closed geodesic distribution for manifolds of
  non-positive curvature}, Discrete Contin. Dyn. Syst. \textbf{2} (1996),
  no.~2, 153--161.

\bibitem{roblin}
Thomas Roblin, \emph{Ergodicit\'e et \'equidistribution en courbure
  n\'egative}, M\'em. Soc. Math. Fr. (N.S.) (2003), no.~95, vi+96.

\bibitem{waddington}
Simon Waddington, \emph{Large deviation asymptotics for {A}nosov flows}, Ann.
  Inst. H. Poincar\'e Anal. Non Lin\'eaire \textbf{13} (1996), no.~4, 445--484.

\bibitem{Walters}
Peter Walters, \emph{An introduction to ergodic theory}, Graduate Texts in
  Mathematics, vol.~79, Springer-Verlag, New York-Berlin, 1982.

\bibitem{Yamamoto}
Kenichiro Yamamoto, \emph{On the weaker forms of the specification property and
  their applications}, Proceedings of the AMS \textbf{137} (2009), no.~11,
  3807--3814.

\bibitem{Yo}
L.S. Young, \emph{Large deviations in dynamical systems}, Trans. Amer. Math.
  Soc. \textbf{318} (1990), no.~2, 525--543.

\end{thebibliography}

\end{document}